\documentclass[onefignum,onetabnum]{siamart190516}
\usepackage[utf8]{inputenc}
 
\usepackage[english]{babel}
\usepackage{graphicx}
\usepackage{fancyhdr}

\usepackage{amsfonts}
\usepackage{amssymb}

\usepackage{amsmath}
\usepackage{amsfonts}
\usepackage{amssymb}
\usepackage{mathtools}
\usepackage{enumitem}
\usepackage[percent]{overpic}
\usepackage{tikz}
\usepackage{mathrsfs}
\usepackage{xargs}
\usepackage[colorinlistoftodos,prependcaption,textsize=tiny]{todonotes}
\newcommandx{\at}[2][1=]{\todo[linecolor=red,backgroundcolor=red!25,bordercolor=red,#1]{#2}}

\usepackage{algorithm}
\usepackage{algorithmic}

\usepackage{mathtools,booktabs}

\usepackage{varwidth}
\usepackage{hyperref}
\usepackage[capitalize]{cleveref}
\usepackage{cite}

\definecolor{rev1}{HTML}{cb270f}
\definecolor{rev2}{HTML}{1c8235}

\title{Computing semigroups with error control\thanks{\funding{This work was supported by a Research Fellowship at Trinity College, Cambridge.}}}
\author{Matthew J. Colbrook\thanks{Department of Applied Mathematics and Theoretical Physics, University of Cambridge. (\email{m.colbrook@damtp.cam.ac.uk})}}

\begin{document}

\date{}

\maketitle

\begin{abstract}
We develop an algorithm that computes strongly continuous semigroups on infinite-dimensional Hilbert spaces with explicit error control. Given a generator $A$, a time $t>0$, an arbitrary initial vector $u_0$ and an error tolerance $\epsilon>0$, the algorithm computes $\exp(tA)u_0$ with error bounded by $\epsilon$. The algorithm is based on a combination of a regularized functional calculus, suitable contour quadrature rules, and the adaptive computation of resolvents in infinite dimensions. As a particular case, we show that it is possible, even when only allowing pointwise evaluation of coefficients, to compute, with error control, semigroups on the unbounded domain $L^2(\mathbb{R}^d)$ that are generated by partial differential operators with polynomially bounded coefficients of locally bounded total variation. For analytic semigroups (and more general Laplace transform inversion), we provide a quadrature rule whose error decreases like $\exp(-cN/\log(N))$ for $N$ quadrature points, that remains stable as $N\rightarrow\infty$, and which is also suitable for infinite-dimensional operators. Numerical examples are given, including: Schr\"odinger and wave equations on the aperiodic Ammann--Beenker tiling, complex perturbed fractional diffusion equations on $L^2(\mathbb{R})$, and damped Euler--Bernoulli beam equations.

\end{abstract}

\begin{keywords}
semigroups, error control, contour methods, resolvent, infinite dimensions, spectral methods
\end{keywords}

\begin{AMS}
65J08, 65M15, 65N35, 47A10, 46N40
\end{AMS}

\vspace{-1mm}

\section{Introduction}\label{sec_intro_sec}

Given a linear operator $\! A$ on an \textit{infinite-dimensional} separable Hilbert space $\mathcal{H}$, can we numerically compute, with \textit{error control}, the solution of
\begin{equation}\setlength\abovedisplayskip{6pt}\setlength\belowdisplayskip{6pt}
\label{cauchy_prob}
u'(t)=Au(t)\text{ for }t\geq 0,\quad\text{with initial condition}\quad u(0)=u_0\in \mathcal{H}?
\end{equation}
The desired solution is written as $u(t)=\exp(tA)u_0$ and made rigorous through the theory of semigroups \cite{pazy2012semigroups,arendt2001cauchy}. Equation \eqref{cauchy_prob} arises in numerous applications and there exist many numerical methods designed to approximate $u(t)$, including but not limited to:
contour methods (the method adopted in the current paper) \cite{weideman2007parabolic,trefethen2014exponentially,sheen2003parallel,hale2008computing}; domain truncation and absorbing boundary conditions (e.g., when $A$ represents a differential operator on an unbounded domain) \cite{engquist1977absorbing,arnold2003discrete,tsynkov1998numerical,szeftel2004design,antoine2008review}; Galerkin methods \cite{lubich_qm_book,kormann2019stable,lasser2020computing}; Krylov methods \cite{grimm2012resolvent,gockler2013convergence,liesen2013krylov};
rational approximations \cite{crouzeix1993stability,brenner1979rational,palencia1993stability}; and series expansions, splitting methods, and exponential integrators \cite{higham2005scaling,lubich2008splitting,iserles2018magnus,hochbruck2010exponential,mclachlan2002splitting,al2011computing}. 

The majority of convergence results in the literature concern specific cases of the operator $A$. If $A$ is unbounded with domain $\mathcal{D}(A)$, it is common to assume regularity on $u_0$ (e.g., $u_0\in\mathcal{D}(A^{\nu})$ for some $\nu>0$) to obtain asymptotic rates of convergence. In particular, the important problem of explicit error control of solutions for arbitrary initial data is largely open. In this paper, we consider the following question:

\vspace{1mm}
\begin{itemize}[leftmargin=2mm,rightmargin=2mm]
	\item[]{\bf{Q.1:}} \textit{Can we compute semigroups with error control? That is, does there exist an algorithm that when given a generator $A$ of a strongly continuous semigroup on $\mathcal{H}$, time $t>0$, arbitrary $u_0\in \mathcal{H}$ and error tolerance $\epsilon>0$, computes an approximation of $\exp(tA)u_0$ to accuracy $\epsilon$ in $\mathcal{H}$?}
\end{itemize}
\vspace{1mm}

\noindent{}We provide a positive answer in \cref{disc_thm1}, with minimal assumptions on the operator $A$ and the initial condition $u_0$. Our method combines a regularized functional calculus, suitable contour quadrature rules, and the adaptive computation of resolvents in infinite dimensions. To the best of our knowledge, this provides the first answer to Q.1.

A prototypical example of \eqref{cauchy_prob} is when $A$ is a partial differential operator (PDO) on some domain. For unbounded domains, such as $\mathcal{H}=L^2(\mathbb{R}^d)$, this is a well-studied yet notoriously difficult challenge. The methods listed above yield invaluable insight into many computational issues. However,  the answer to Q.1 for unbounded domains remains largely unknown in the general case. For example, only in specific cases does one know how to truncate the domain and set appropriate boundary conditions. Even if one can prove the \textit{existence} of suitable truncations and boundary conditions, there may not be an \textit{algorithm} that does this (the original results of \cite{engquist1979radiation} reflect this). Moreover, difficulties are intensified in the case of irregular geometry or variable coefficients (see \cref{AB_example}). For the case of the Schr\"odinger equation,
\begin{equation}\setlength\abovedisplayskip{5pt}\setlength\belowdisplayskip{5pt}
\label{cauchy_prob_scrod}
i\frac{\partial u}{\partial t}=-\Delta u+Vu,\quad u_0\in L^2(\mathbb{R}^d),
\end{equation}
Q.1 has only just been answered for general classes of potential $V$ by using weighted Sobolev bounds on the initial condition for rigorous domain truncation \cite{becker2020computing}. In light of this, a second question we consider is the following:

\vspace{1mm}
\begin{itemize}[leftmargin=2mm,rightmargin=2mm]
	\item[]{\bf{Q.2:}} \textit{For $\mathcal{H} = L^2(\mathbb{R}^d)$,
	is there a large class of PDO generators $A$ (more general than \eqref{cauchy_prob_scrod}) on the unbounded domain $\mathbb{R}^d$ where the answer to Q.1 is yes?}
\end{itemize}
\vspace{1mm}

\noindent{}We provide a positive answer in \cref{PDO_thm1}, for PDOs formally defined by
$$\setlength\abovedisplayskip{5pt}\setlength\belowdisplayskip{5pt}
[{A}u](x)=\sum_{k\in\mathbb{Z}_{\geq0}^d,\|k\|_{\infty}\leq N} a_k(x)\partial^ku(x),
$$
with minimal regularity assumptions on the coefficients $a_k$. Our method uses Hermite functions (for convenience only) to reduce the problem to Q.1 via quasi-Monte Carlo numerical integration. Similar results can be shown via this technique for domains different to $\mathbb{R}^d$ and using other choices of basis.

The solution of \eqref{cauchy_prob} is, at least formally, the Bromwich complex contour integral
\begin{equation}\setlength\abovedisplayskip{5pt}\setlength\belowdisplayskip{5pt}
\label{bromwich_int}
\exp(tA)u_0=\left[\frac{-1}{2\pi i}\int^{\sigma+i\infty}_{\sigma-i\infty}e^{zt}(A-zI)^{-1}\,dz\right]u_0, \quad \text{for sufficiently large $\sigma\in\mathbb{R}$,}
\end{equation}
and computing solutions of \eqref{cauchy_prob} is a special case of inverting an operator-valued Laplace transform. The first use of \eqref{bromwich_int} as a method for solving the heat equation goes back to Talbot \cite{talbot1979accurate}, though with no reported numerical results for time-evolution problems. For early numerical work on this problem, see Gavrilyuk and Makarov \cite{gavrilyuk2001exponentially}, as well as Sheen, Sloan, and Thom{\'e}e \cite{sheen2003parallel}. Since these early works, there have been numerous methods using \eqref{bromwich_int}, with a focus on parabolic PDEs \cite{weideman2010improved,weideman2007parabolic,lopez2006spectral,dingfelder2015improved,mclean2004time,sheen2003parallel,gavrilyuk2011exponentially} (see also the discussion in \cref{analytic_semigp_sec}). For analytic semigroups, one can deform the contour of integration to obtain exponential decay of the integrand (see \cref{fig:cont_examples}).

An excellent survey of contour methods is provided in the paper \cite{trefethen2014exponentially} of Trefethen and Weideman. Contour methods possess many potential advantages, particularly when the resolvents $(A-zI)^{-1}$ can be computed efficiently. The linear systems that result from quadrature rules can be solved in parallel, and their solutions can be reused for different times. When direct methods are impractical, Krylov subspace methods are useful since only one Krylov basis needs to be constructed and computations for different $z$ reduce to a sequence of upper-Hessenberg systems of small dimension \cite{simoncini2007recent}. In the present paper, we demonstrate an additional key advantage: contour methods can be used in the \textit{infinite-dimensional} setting to tackle Q.1 directly, even for non-analytic semigroups. In the general case, contour deformations may not be possible and one needs to be careful even when defining $\exp(tA)$ for unbounded, possibly non-normal operators $A$ \cite{batty2009unbounded}. For example, the integrand in \eqref{bromwich_int} is not absolutely convergent. To overcome this issue, we combine a regularized version of the functional calculus and numerical quadrature of an appropriate contour integral. We compute the resolvent in an adaptive manner, providing explicit error control.

Dealing with the operator $A$ directly, as opposed to a truncation or discretization, allows us to provide rigorous convergence results under quite general assumptions. In many problems, there is an additional practical benefit in that it is easier to bound the resolvent (see \eqref{Nrange_resbound} for how to do this using the numerical range). In contrast, previous approaches to \eqref{cauchy_prob} are typically of the flavor ``truncate-then-solve.'' A truncation/discretization of $A$ is adopted and methods for computing the exponential of a finite matrix are used. In rigorously answering Q.1, it is vital to adopt a ``solve-then-discretize'' approach\footnote{The term ``solve-then-discretize'' can be traced at least as far back as the \texttt{Chebfun} package for computing with functions in MATLAB \cite{driscoll2014chebfun}.}, with the main steps outlined in \cref{alg:l2_semigroup}. The ``solve-then-discretize'' paradigm has recently been applied to spectral computations \cite{colbrook2019compute,horning2020feast,colbrook2019foundations,colbrook2019computationGEOM,johnstone2021bulk}, extensions of classical methods such as the QL and QR algorithms \cite{webb_thesis,colbrook2019infinite} (see also \cite{townsend2015continuous}), Krylov methods \cite{olver2009gmres,gilles2019continuous}, and spectral measures \cite{webb2017spectra,colbrook2019computing,colbrook2020computing}. Related work includes that of Olver, Townsend and Webb, providing a foundational and practical framework for infinite-dimensional numerical linear algebra and computations with infinite data structures \cite{Olver_Townsend_Proceedings, Olver_SIAM_Rev, Olver_code1, Olver_code2}.

\subsection{Summary of main results} Paraphrases of our main theorems are:

\vspace{0.5mm}

\noindent{}\textbf{Theorem \ref{disc_thm1}:}\! Given an infinite matrix representation of $A$, the answer to Q.1 is yes.

\noindent{}\textbf{Theorem \ref{PDO_thm1}:}\! The answer to Q.2 is yes for PDOs that generate a strongly continuous semigroup, and whose coefficients are polynomially bounded and of locally bounded total variation. This result holds even if we only allow our algorithm to point sample the coefficients and can be extended to domains other than $\mathbb{R}^d$.

\vspace{0.5mm}

\noindent{}\textbf{Theorem \ref{an_quad_theorem}:}\! We provide a stable and rapidly convergent quadrature rule for analytic semigroups, summarized in \cref{alg:spec_meas}. The quadrature error decreases like $\exp(-cN/\log(N))$ for $N$ quadrature points and the quadrature remains stable as $N\rightarrow\infty$.\footnote{This is a well-known difficulty in the literature - instabilities occur for quadrature rules whose points have unbounded real part as $N\rightarrow\infty$ \cite{weideman2010improved}. An advantage of a stable quadrature rule is that it no longer becomes essential to determine an optimal value of $N$ or optimal contour parameters, which often requires a heavy burden of case-specific analysis on the user. Taking $N$ larger no longer incurs a stability penalty, and fine-tuning to achieve high accuracy is no longer needed.} To deal with infinite-dimensional operators, contours are evaluated far from the spectrum of $A$ so that computing $(A-zI)^{-1}$ does not become prohibitively expensive or require large truncations. Our quadrature rule can also be used for inverting more general Laplace transforms (\cref{complex_diff_example}).

\vspace{0.5mm}

We demonstrate the practicality and versatility of our approach on a range of examples in \cref{num_exams_sec}. The only implementation requirement is computing $(A-zI)^{-1}$ with error bounds. This flexibility allows users to compute semigroups for a wide range of problems and employ their favorite discretization.

\subsection{Extensions to high-order Cauchy problems}

Our results extend to
\begin{equation}\setlength\abovedisplayskip{6pt}\setlength\belowdisplayskip{6pt}
\label{ACPN}
u^{(N)}+A_{N-1}u^{(N-1)}+\cdots+A_0 u=0 \text{ for } t\geq 0,\quad u^{(j)}(0)=u_j \text{ for }j=0,...,N-1,
\end{equation}
for suitable operators $A_0,...,A_{N-1}$. Here, the notation $u^{(l)}$ means the $l$th derivative of $u$ with respect to time. One can study \eqref{ACPN} directly, under the assumption that $u_j\in\mathcal{H}$, via the characteristic polynomial $p_{(N)}(z)=z^N+\sum_{j=0}^{N-1}z^{j}A_k$ and the generalized resolvent $R_{(N)}(z)=p_{(N)}(z)^{-1}$. However, a stronger form of the usual well-posedness \cite{xiao2013cauchy} is required when $N>1$ to prevent paradoxical situations such as loss of exponential bounds of solutions and non-existence of phase spaces \cite{fattorini2011second}. A generalization of the Hille--Yosida theorem holds under these conditions \cite[Ch. 2]{xiao2013cauchy} and our methods of building algorithms with error control can be extended under strong well-posedness by replacing $(A-zI)^{-1}$ with $R_{(N)}(z)$. A special case, where strong well-posedness is not needed, is the generalized wave equation
$$\setlength\abovedisplayskip{6pt}\setlength\belowdisplayskip{6pt}
u''(t)=Au(t),t\geq 0\qquad u(0)=u_0\in\mathcal{H}, u'(0)=u_1\in\mathcal{H},
$$
whose solutions can be computed using sines and cosines of $\sqrt{-A}$.

A far more common approach reduces \eqref{ACPN} to the first-order system \cite[Ch. VI]{engel1999one}
$$\setlength\abovedisplayskip{6pt}\setlength\belowdisplayskip{6pt}
\mathcal{U}'=\mathcal{A}\mathcal{U}\text{ for }t\geq 0,\quad\mathcal{A}=\begin{pmatrix}
0 & I &  & \\
 & 0 & I &     \\
  && \ddots  & \ddots \\
-A_0 & -A_1 & \cdots & -A_N
\end{pmatrix},\quad \mathcal{U}=\begin{pmatrix}
u \\
u^{(1)}\\
\vdots \\
u^{(N-1)}
\end{pmatrix}.
$$
This approach allows a more flexible treatment of initial conditions (e.g., $u_j$ could lie in different subspaces of a Banach space). Our results apply directly to this case if $\mathcal{A}$ generates a strongly continuous semigroup (see the example in \cref{Elastic_example}).

\subsection{Notation and outline} We use $\langle \cdot,\cdot\rangle$ to denote the inner product on $\mathcal{H}$. The induced norm and corresponding operator norm are denoted by $\|\cdot\|$. The identity operator on a Banach space $X$ is denoted by $I$ and the algebra of bounded linear operators on $X$ is denoted by $\mathcal{L}(X)$. The spectrum and domain of a linear operator $A$ are denoted by $\mathrm{Sp}(A)$ and $\mathcal{D}(A)$, respectively. The resolvent operator $(A-zI)^{-1}$, defined on the resolvent set $\rho(A):=\mathbb{C}\backslash\mathrm{Sp}(A)$, is denoted by $R(z,A)$. When dealing with $l^2(\mathbb{N})$ (the space of square summable sequences), $P_n$ denotes the orthogonal projection onto the span of the first $n$ canonical basis vectors.

In \cref{sec_maths_prelimns} we recall basic properties of strongly continuous semigroups and suitable definitions of computational problems in infinite-dimensional spaces. Results for the canonical Hilbert space $l^2(\mathbb{N})$ are presented in \cref{disc_thm_sec} and extended to partial differential operators in \cref{PDE_section}. Quadrature rules for analytic semigroups are presented in \cref{analytic_semigp_sec}, along with numerical examples. Further numerical examples are given in \cref{num_exams_sec} and concluding remarks in \cref{conc_sect}. We also include an Appendix of some results needed in our proofs.

\section{Mathematical preliminaries}\label{sec_maths_prelimns}

\subsection{Recalling basic properties of semigroups}\label{FA_recall}

We first recall the following two definitions, found in any textbook that treats semigroup theory, for example \cite{pazy2012semigroups,arendt2001cauchy}. The first definition defines a semigroup, and the second provides standard notions of a solution of \eqref{cauchy_prob}. \cref{thm_neededn} then connects these two definitions.

\begin{definition}
\label{scsg}
A strongly continuous semigroup ($C_0$-semigroup) on a Banach space $X$ is a map $S:[0,\infty)\rightarrow \mathcal{L}(X)$ such that
\begin{enumerate}
	\item $S(0)=I$
	\item $S(s+t)=S(s)S(t),\quad\forall s,t\geq 0$
	\item $S(t)$ converges strongly to $I$ as $t\downarrow 0$ (i.e., $\lim_{t\downarrow0}S(t)x=x,\text{ for all } x\in X$).
\end{enumerate}
The infinitesimal generator $A$ of $S$ is defined as $Ax=\lim_{t\downarrow 0}\frac{1}{t}(S(t)-I)x$,
where $\mathcal{D}(A)$ is all $x\in X$ such that the limit exists, and we write $S(t)=\exp(tA)$.
\end{definition}

\begin{definition}
\label{cauchy}
A continuous function $u:[0,\infty)\rightarrow X$ is a
\begin{enumerate}
	\item Classical solution of the Cauchy problem \eqref{cauchy_prob} if it is continuously differentiable, $u(t)\in \mathcal{D}(A)$ for all $t\geq 0$, and \eqref{cauchy_prob} is satisfied,
	\item Mild solution of the Cauchy problem \eqref{cauchy_prob} if for all $t\geq 0$,
	$$
	\int_0^t u(s)ds\in\mathcal{D}(A)\quad \text{and}\quad A \int_0^t u(s)ds = u(t)-u_0.
	$$
\end{enumerate}
\end{definition}

\vspace{-1mm}The following theorem tells us precisely when a unique mild solution exists.

\begin{theorem}[Theorem 3.1.12 of \cite{arendt2001cauchy}]\label{thm_neededn} Let $A$ be a closed operator acting on the Banach space $X$. The following assertions are equivalent:
\begin{enumerate}[leftmargin=9mm]
	\item[(a)] For any $u_0\in X$, there exists a unique mild solution of \eqref{cauchy_prob}.
	\item[(b)] $\rho(A)\neq\emptyset$ and for every $u_0\in\mathcal{D}(A)$, there is a unique classical solution of \eqref{cauchy_prob}.
	\item[(c)] The operator $A$ generates a $C_0$-semigroup $S$.
\end{enumerate}
When these conditions hold, the solution is given by $u(t)=S(t)u_0=\exp(tA)u_0$.
\end{theorem}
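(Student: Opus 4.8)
The plan is to prove the full equivalence via the forward implications from $(c)$, which are routine, together with a single substantial converse that manufactures a $C_0$-semigroup out of the well-posedness hypotheses; I would organize this as $(c)\Rightarrow(a)$, $(c)\Rightarrow(b)$, $(a)\Rightarrow(c)$, and $(b)\Rightarrow(c)$. For the two forward directions, assume $A$ generates $S=\exp(tA)$. For $u_0\in\mathcal{D}(A)$ the standard calculus of $C_0$-semigroups gives $S(t)u_0\in\mathcal{D}(A)$, continuous differentiability of $t\mapsto S(t)u_0$, and $\tfrac{d}{dt}S(t)u_0=AS(t)u_0$, so $u(t)=S(t)u_0$ is a classical solution; for arbitrary $u_0\in X$ the identity $A\int_0^t S(s)u_0\,ds=S(t)u_0-u_0$ exhibits $S(t)u_0$ as a mild solution. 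Uniqueness in both cases follows from the observation that for any (classical or mild) solution $v$ the map $s\mapsto S(t-s)v(s)$ is constant on $[0,t]$, forcing $v(t)=S(t)u_0$. Finally, the Hille--Yosida theorem places a right half-plane in $\rho(A)$, so $\rho(A)\neq\emptyset$; this settles $(c)\Rightarrow(a)$ and $(c)\Rightarrow(b)$.

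The heart of the argument is the converse, which I would carry out in detail for $(a)$. Define solution operators $S(t)u_0:=u(t;u_0)$; linearity is immediate from uniqueness. The first obstacle is boundedness of each $S(t)$. I would consider the linear map $\Phi\colon u_0\mapsto u(\,\cdot\,;u_0)$ into $C([0,1];X)$ and show it has closed graph: if $u_0^n\to u_0$ in $X$ and $u(\,\cdot\,;u_0^n)\to w$ uniformly, then passing to the limit in $\int_0^t u(s;u_0^n)\,ds\in\mathcal{D}(A)$ together with $A\int_0^t u(s;u_0^n)\,ds=u(t;u_0^n)-u_0^n$ and using that $A$ is closed shows that $w$ is a mild solution with datum $u_0$, whence $w=u(\,\cdot\,;u_0)$ by uniqueness. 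The closed graph theorem then gives $\sup_{t\in[0,1]}\|S(t)\|\le M$. The semigroup law comes from uniqueness again: for fixed $s$, a change of variables in the defining integral identity shows $t\mapsto u(t+s;u_0)$ is the mild solution with datum $S(s)u_0$, so $S(t+s)=S(t)S(s)$, and combined with the local bound this yields exponential boundedness $\|S(t)\|\le M e^{\omega_0 t}$ on $[0,\infty)$. Strong continuity at $0$ is just continuity of $u(\,\cdot\,;u_0)$ with $u(0)=u_0$, so $S$ is a genuine $C_0$-semigroup.

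It remains to identify the generator $B$ of $S$ with $A$, which I expect to be the subtlest point. Writing $\tfrac1t(S(t)u_0-u_0)=A\bigl(\tfrac1t\int_0^t u(s;u_0)\,ds\bigr)$, letting $t\downarrow0$, and invoking closedness of $A$ shows that $u_0\in\mathcal{D}(B)$ implies $u_0\in\mathcal{D}(A)$ with $Au_0=Bu_0$, i.e. $B\subseteq A$. To upgrade this to equality I would first recover $\rho(A)\neq\emptyset$ directly from $(a)$: if $Ax=\lambda x$ with $x\neq0$ and $\mathrm{Re}\,\lambda>\omega_0$, then $e^{\lambda t}x$ is a classical, hence mild, solution with datum $x$, so uniqueness forces $S(t)x=e^{\lambda t}x$ and $e^{(\mathrm{Re}\,\lambda-\omega_0)t}\le M$ for all $t\ge0$, a contradiction; thus $\lambda-A$ is injective, and being an extension of the surjective $\lambda-B$ it is bijective, giving $\lambda\in\rho(A)\cap\rho(B)$. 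For such a common $\lambda$, given $y\in\mathcal{D}(A)$ I solve $(\lambda-B)w=(\lambda-A)y$ with $w\in\mathcal{D}(B)\subseteq\mathcal{D}(A)$; then $(\lambda-A)w=(\lambda-A)y$ and injectivity of $\lambda-A$ give $y=w\in\mathcal{D}(B)$, so $A\subseteq B$ and hence $A=B$.

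The implication $(b)\Rightarrow(c)$ runs along the same lines, with classical solutions producing the opposite automatic inclusion $A\subseteq B$ (since $u_0\in\mathcal{D}(A)$ now yields a genuinely differentiable solution with $u'(0)=Au_0$), after which the hypothesis $\rho(A)\neq\emptyset$ and the same resolvent-comparison lemma close the gap to $A=B$. The main difficulties throughout are therefore twofold: establishing boundedness (and thence exponential boundedness) of the solution operators, which I handle by the closed graph theorem and the closedness of $A$; and identifying the generator, where the role of $\rho(A)\neq\emptyset$ is to ensure $\rho(A)\cap\rho(B)\neq\emptyset$ so that the inclusion between $A$ and $B$ can be promoted to an equality.
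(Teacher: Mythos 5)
You should first note that the paper itself offers no proof of this statement: it is imported wholesale from Theorem 3.1.12 of the Arendt--Batty--Hieber--Neubrander monograph (where it is established via Laplace transform / integrated semigroup machinery), so your argument has to stand on its own. Most of it does. The implications from (c), and your $(a)\Rightarrow(c)$ --- closed graph theorem for $u_0\mapsto u(\cdot\,;u_0)\in C([0,1];X)$, the shift argument for the semigroup law, the inclusion $B\subseteq A$ by closedness of $A$, the exclusion of eigenvalues with $\mathrm{Re}\,\lambda>\omega_0$, and the promotion of $B\subseteq A$ to $A=B$ once $\rho(A)\cap\rho(B)\neq\emptyset$ --- form a correct and classical route (closer to Pazy or Engel--Nagel than to the cited source). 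One small repair: for a mild solution $v$, the map $s\mapsto S(t-s)v(s)$ cannot be differentiated, since $v(s)$ need not lie in $\mathcal{D}(A)$; run the constancy argument on $V(s)=\int_0^s v(r)\,dr$ instead, which is continuously differentiable and satisfies $V'=AV+u_0$, and then differentiate $s\mapsto S(t-s)V(s)$.

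The genuine gap is $(b)\Rightarrow(c)$, which does not ``run along the same lines.'' Under (b) the solution operators exist only on $\mathcal{D}(A)$, and $(\mathcal{D}(A),\|\cdot\|_X)$ is not complete, so the closed graph theorem is unavailable in the $X$-norm; applied in the graph norm (where $\mathcal{D}(A)$ is a Banach space, $A$ being closed) it yields only $\sup_{t\leq 1}\|u(t;u_0)\|\leq C(\|u_0\|+\|Au_0\|)$. This neither defines bounded operators on $X$ nor lets you extend by density --- density of $\mathcal{D}(A)$ is not a hypothesis of (b), it has to be a conclusion --- so there is no semigroup $S$ on $X$ and no generator $B$ yet, and the inclusion ``$A\subseteq B$'' does not even typecheck. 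The hypothesis $\rho(A)\neq\emptyset$ must enter the \emph{construction} of the semigroup, not merely the final resolvent comparison. Two standard repairs: (i) run your $(a)\Rightarrow(c)$ argument in the Banach space $\mathcal{D}(A)$ with the graph norm, noting that classical solutions for $A$ correspond to mild solutions for the part of $A$ in that space, and then conjugate the resulting semigroup by the isomorphism $(\lambda-A)\colon\mathcal{D}(A)\to X$ for some $\lambda\in\rho(A)$; or (ii) cheaper, prove $(b)\Rightarrow(a)$ directly: for $u_0\in X$ check that $v(t):=(\lambda-A)\,u\bigl(t;R(\lambda,A)u_0\bigr)$ is a mild solution with datum $u_0$, and get mild uniqueness from classical uniqueness because $V(t)=\int_0^t v(s)\,ds$ is itself a classical solution when $v$ is mild with datum $0$; then invoke your $(a)\Rightarrow(c)$. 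That $\rho(A)\neq\emptyset$ cannot be deferred to the endgame is shown by known examples of closed operators with empty resolvent set for which \eqref{cauchy_prob} has a unique classical solution for every $u_0\in\mathcal{D}(A)$ and yet no $C_0$-semigroup is generated.
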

The Hille--Yosida theorem tells us precisely when an operator $A$ generates a strongly continuous semigroup, and thus, by \cref{thm_neededn}, when \eqref{cauchy_prob} admits a unique solution.
\begin{theorem}[Hille--Yosida theorem]
\label{HY_th}
A closed operator $A$ on $X$ generates a $C_0$-semigroup if and only if $A$ is densely defined and there exists $\omega\in\mathbb{R}$, $M>0$ with
\begin{itemize}
	\item[(1)] $\{\lambda\in\mathbb{R}:\lambda>\omega\}\subset\rho(A)$.
	\item[(2)] For all $\lambda>\omega$ and $n\in\mathbb{N}$, $(\lambda-\omega)^n\|R(\lambda,A)^n\|\leq M.$
\end{itemize}
Under these conditions, $\|\exp(tA)\|\leq M\exp(\omega t)$ and if $\mathrm{Re}(\lambda)\!>\!\omega$ then $\lambda\!\in\!\rho(A)$ with
\begin{equation}
	\label{Hille_YBD}
	\|R(\lambda,A)^n\|\leq\frac{M}{(\mathrm{Re}(\lambda)-\omega)^n},\quad \text{for all } n\in\mathbb{N}.
	\end{equation}
\end{theorem}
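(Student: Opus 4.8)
The plan is to prove the two implications separately, reading the quantitative bounds off each construction. I begin with \emph{necessity}: suppose $A$ generates $S(t)=\exp(tA)$. I would first obtain the exponential bound by noting that strong continuity and the uniform boundedness principle give $M:=\sup_{0\le t\le 1}\|S(t)\|<\infty$, and that iterating the semigroup law $S(s+t)=S(s)S(t)$ propagates this to $\|S(t)\|\le Me^{\omega t}$ for a suitable $\omega$. Dense definedness and closedness of $A$ are the standard consequences of strong continuity, obtained from the averaged vectors $h^{-1}\int_0^h S(s)x\,ds\in\mathcal{D}(A)$ tending to $x$ as $h\downarrow0$. For $\mathrm{Re}(\lambda)>\omega$ the resolvent is then the absolutely convergent Laplace transform $(\lambda I-A)^{-1}=\int_0^\infty e^{-\lambda s}S(s)\,ds$; differentiating $n-1$ times in $\lambda$ and taking norms yields $\|(\lambda I-A)^{-n}\|\le M/(\mathrm{Re}(\lambda)-\omega)^n$, which is exactly \eqref{Hille_YBD} (recall $(\lambda I-A)^{-1}=-R(\lambda,A)$ in the paper's convention), and specializing to real $\lambda>\omega$ gives condition~(2).

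For the converse, \emph{sufficiency}, I assume (1)--(2). Replacing $A$ by $A-\omega I$, equivalently $S(t)$ by $e^{-\omega t}S(t)$, I may take $\omega=0$, so $\|\lambda^n(\lambda I-A)^{-n}\|\le M$ for all $\lambda>0$. To absorb the factor $M>1$ I introduce the equivalent norm $\|x\|_*:=\sup_{n\ge0,\ \lambda>0}\|\lambda^n(\lambda I-A)^{-n}x\|$, which satisfies $\|x\|\le\|x\|_*\le M\|x\|$ and makes $\lambda(\lambda I-A)^{-1}$ a contraction, reducing matters to the contraction case at the cost of the constant $M$. The core of the argument is the Yosida approximation $A_\lambda:=\lambda^2(\lambda I-A)^{-1}-\lambda I=\lambda A(\lambda I-A)^{-1}$, a bounded operator for each $\lambda>0$ with $A_\lambda x\to Ax$ as $\lambda\to\infty$ for every $x\in\mathcal{D}(A)$. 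Since each $A_\lambda$ is bounded, $e^{tA_\lambda}$ is defined by its power series and is a contraction in $\|\cdot\|_*$; as the $A_\lambda$ mutually commute, differentiating $s\mapsto e^{(t-s)A_\mu}e^{sA_\lambda}x$ bounds $\|e^{tA_\lambda}x-e^{tA_\mu}x\|_*$ by $t\,\|A_\lambda x-A_\mu x\|_*$ on $\mathcal{D}(A)$. Hence $S(t)x:=\lim_{\lambda\to\infty}e^{tA_\lambda}x$ exists uniformly for $t$ in compact sets, first on the dense set $\mathcal{D}(A)$ and then on all of $X$ by uniform boundedness; $S(t)$ inherits the semigroup law and strong continuity, and undoing the rescaling recovers $\|\exp(tA)\|\le Me^{\omega t}$.

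The step I expect to be the \emph{main obstacle} is identifying the generator of the constructed $S(t)$ as exactly $A$, rather than a proper extension. One inclusion is immediate from $A_\lambda x\to Ax$ and interchanging the limit with the defining integral; for the reverse I would use that $\lambda I-A$ is \emph{surjective} for $\lambda>\omega$ (from condition~(1)) together with the closedness of $A$ to pin the domain down precisely. This surjectivity-plus-closedness argument, and the verification that $\|\cdot\|_*$ is a genuine norm equivalent to $\|\cdot\|$ under which the relevant operators contract, are where the real care is needed; the remaining estimates are routine once the exponential and resolvent-power bounds are in hand.
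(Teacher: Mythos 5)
The paper does not actually prove this statement: it is quoted as the classical Hille--Yosida theorem (in its general $M\geq 1$ form, often attributed to Feller--Miyadera--Phillips) and deferred to the standard references \cite{pazy2012semigroups,arendt2001cauchy}, so there is no internal proof to compare against. Your sketch is precisely the standard textbook argument, and it is correct in outline: necessity via uniform boundedness plus the semigroup law for the exponential bound, and the Laplace-transform representation $(\lambda I-A)^{-1}=\int_0^\infty e^{-\lambda s}S(s)\,ds$ differentiated $n-1$ times to get \eqref{Hille_YBD} with the same $M,\omega$ (your remark on the sign convention $R(\lambda,A)=-(\lambda I-A)^{-1}$ is the right bookkeeping, since only powers' norms matter); sufficiency via renorming plus Yosida approximation, with the generator identified by the surjectivity/injectivity argument you describe. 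One step is more delicate than your one-line formula suggests: it is \emph{not} immediate that $\|x\|_*:=\sup_{n\ge 0,\,\lambda>0}\|\lambda^n(\lambda I-A)^{-n}x\|$ makes every $\lambda(\lambda I-A)^{-1}$ a contraction, because products of resolvents at different values of $\lambda$ do not telescope under this supremum. The standard fix (Pazy, Ch.~1.5) is a two-stage construction: fix $\mu>0$, set $\|x\|_\mu=\sup_{n\ge0}\|\mu^n(\mu I-A)^{-n}x\|$, use the resolvent identity to show $\|\lambda(\lambda I-A)^{-1}x\|_\mu\le\|x\|_\mu$ for $0<\lambda\le\mu$ and that $\mu\mapsto\|x\|_\mu$ is nondecreasing, and only then pass to $\mu\to\infty$. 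Since you explicitly flag the renorming and the generator identification as the places ``where the real care is needed,'' I read this as an honest gap-awareness in a correct skeleton rather than an error; filling in that lemma would complete the proof.
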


We show below that \cref{HY_th} can be exploited to give an algorithm that positively answers Q.1. In particular, the resolvent bound in \eqref{Hille_YBD} is used in the proof of \cref{disc_thm1} and allows error estimates for regularized contour integrals. 

\subsection{Computational problems}

Since the general Hilbert space $\mathcal{H}$ we consider is infinite-dimensional, care must be taken when defining a computational problem and when stating the information that we allow our algorithms to access. We begin with a precise and general definition of a computational problem, following the setup of Solvability Complexity Index (SCI) hierarchy \cite{antun2021can,hansen2011solvability,colbrook2020foundations,ben2015can}. The SCI hierarchy provides a general framework for scientific computation.

\begin{definition}
\label{def_comp_problem}
A collection $\{\Xi,\Omega,\mathcal{M},\Lambda\}$ is a computational problem if:
\begin{itemize}[noitemsep,leftmargin=5mm]
\item[(i)] \underline{Domain}: $\Omega$ is some set.
\item[(ii)] \underline{Evaluation set} that distinguishes elements of $\Omega$: $\Lambda$ is a set of complex-valued functions on $\Omega$, such that if $\iota_1,\iota_2\in\Omega$ has $f(\iota_1)=f(\iota_2)$ for all $f\in\Lambda$, then $\iota_1=\iota_2$.
\item[(iii)] \underline{Problem function}: $\Xi:\Omega\to \mathcal{M}$, where $\mathcal{M}$ is a metric space with metric $d_{\mathcal{M}}$.
\end{itemize}
\end{definition}

The domain $\Omega$ is the set of objects that give rise to our computational problems. The problem function $\Xi : \Omega\to \mathcal{M}$ describes what we want to compute. Finally, the evaluation set $\Lambda$ is the collection of functions that provide the information we allow algorithms to read. With this in hand, we can define a general algorithm, which shows the interplay and purpose of each part of a computational problem.
\begin{definition}\label{Gen_alg}
Given a computational problem $\{\Xi,\Omega,\mathcal{M},\Lambda\}$, a {general algorithm} is a mapping $\Gamma:\Omega\to \mathcal{M}$ such that for each $\iota\in\Omega$
\begin{itemize}[leftmargin=5mm]
\item[(i)] There exists a finite (non-empty) subset of evaluations $\Lambda_\Gamma(\iota) \subset\Lambda$, 
\item[(ii)] The action of $\,\Gamma$ on $\iota$ only depends on $\{\iota_f\}_{f \in \Lambda_\Gamma(\iota)}$ where $\iota_f := f(\iota),$
\item[(iii)] For every $\eta\in\Omega$ such that $\eta_f=\iota_f$ for every $f\in\Lambda_\Gamma(\iota)$, it holds that $\Lambda_\Gamma(\eta)=\Lambda_\Gamma(\iota)$.
\end{itemize}
We will sometimes write $\Gamma(\{A_f\}_{f \in \Lambda_\Gamma(A)})$, to emphasize that $\Gamma(A)$ only depends on the results $\{A_f\}_{f \in \Lambda_\Gamma(A)}$ of finitely many evaluations. 
\end{definition}

These three properties are the most fundamental properties we would expect any deterministic computational device to obey. The first condition says that the algorithm can only take a finite amount of information, though it is allowed to adaptively choose the information, depending on the input it reads. The second condition ensures that the algorithm's output only depends on its input. The final condition ensures that the algorithm produces outputs consistently. The goal is for the algorithm $\Gamma$ to approximate the problem function $\Xi : \Omega\to \mathcal{M}$ in a suitable sense.

\vspace{1mm}

\textbf{The type of algorithms in this paper:} In this paper, we exclusively consider \textit{arithmetic algorithms} (shortened to ``algorithms''), meaning that $\Gamma$ is recursive in its input ($\{f(\iota)\}_{f \in \Lambda}$ for $\iota\in\Omega$) and outputs a finite string of complex numbers that can be identified with an element in $\mathcal{M}$. For example, when considering computations in $l^2(\mathbb{N})$, our algorithms compute a vector in $l^2(\mathbb{N})$ of finite support with respect to the canonical basis. By recursive we mean the following. If $f(\iota)\!  \in \! \mathbb{Q}\! +\! i\mathbb{Q}$ for all $f \in \Lambda$, $\iota \in \Omega$, then $\Gamma(\{f(\iota)\}_{f \in \Lambda})$ can be executed by a Turing machine. If $f(\iota) \in \mathbb{C}$ for all $f \in \Lambda$, then $\Gamma(\{f(\iota)\}_{f \in \Lambda})$ can be executed by a Blum--Shub--Smale (BSS) machine \cite{Smale_book}. (In both cases with an oracle consisting of $\{f(\iota)\}_{f \in \Lambda}$). \textit{The reader need not worry about these matters}, but note that this means that our algorithms can be adapted and executed rigorously through methods such as interval arithmetic \cite{tucker2011validated,rump2010verification}.\footnote{The results of the current paper can be interpreted in terms of the SCI hierarchy. A computational problem $\{\Xi,\Omega,\mathcal{M},\Lambda\}$ lies in $\Delta_1^A$ if there exists an algorithm $\Gamma$ such that $d_{\mathcal{M}}(\Gamma(\iota,\epsilon),\Xi(\iota))\leq \epsilon$ for all $\iota\in\Omega$ and $\epsilon>0.$ For the other classes in the SCI hierarchy, see \cite{colbrook2020foundations}. For example, most infinite-dimensional spectral problems of interest do not lie in $\Delta_1^A$ and there is a classification theory determining which spectral problems can be solved, and with what type of algorithm \cite{colbrook2020foundations}.} For all of the numerical examples of this paper, we have performed computations using standard IEEE double-precision floating-point arithmetic.

\section{$C_0$-semigroups on $l^2(\mathbb{N})$ can be computed with error control}\label{disc_thm_sec}
First, we consider the canonical separable Hilbert space $l^2(\mathbb{N})$ of square summable sequences, using $e_1$, $e_2$, $\hdots$ to denote the canonical orthonormal basis. Let $\mathcal{C}(l^2(\mathbb{N}))$ denote the set of closed and densely defined linear operators $A$ such that $\mathrm{span}\{e_n:n\in\mathbb{N}\}$ forms a core of $A$ and its adjoint $A^*$ \cite[Ch. 3]{kato2013perturbation}. If $A\in\mathcal{C}(l^2(\mathbb{N}))$, then we can associate an infinite matrix with the operator $A$ through the inner products $A_{j,k}=\langle Ae_k, e_j \rangle$. Given $(A,u_0)\in\mathcal{C}(l^2(\mathbb{N}))\times l^2(\mathbb{N})$, we consider the following evaluation functions (recall that this is the readable input to our algorithm), denoted by $\Lambda_1$, which include the case of inexact input:
\begin{itemize}[noitemsep,leftmargin=4mm]
	\item \underline{Matrix evaluation functions:} $\{f_{j,k,m}^{(1)},f_{j,k,m}^{(2)}:j,k,m\in\mathbb{N}\}$ such that
$$
|f_{j,k,m}^{(1)}(A)-\langle Ae_k,e_j\rangle|\leq 2^{-m},\quad |f_{j,k,m}^{(2)}(A)-\langle Ae_k,Ae_j\rangle|\leq 2^{-m}, \quad \forall j,k,m\in\mathbb{N}.
$$
\item \vspace{-0.5mm}\underline{Coefficient and norm evaluation functions:} $\{f_{j,m}:j\in\mathbb{N}\cup\{0\},m\in\mathbb{N}\}$ such that
\begin{equation}\label{needinlemma}
|f_{0,m}(u_0)-\langle u_0,u_0\rangle|\leq 2^{-m},\quad|f_{j,m}(u_0)-\langle u_0,e_j\rangle|\leq 2^{-m},\quad\forall j,m\in\mathbb{N}.
\end{equation}
\end{itemize}

Following \cref{def_comp_problem}, we let $\Omega_{C_0}$ denote the set of triples $(A,u_0,t)$ where $A\in \mathcal{C}(l^2(\mathbb{N}))$ generates a strongly continuous semigroup, $u_0\in l^2(\mathbb{N})$ and $t>0$. We define the set of evaluation functions for such triples to be $\Lambda_{C_0}=\Lambda_1\cup\{M(A),\omega(A)\}$, where $M=M(A)$ and $\omega=\omega(A)$ are constants satisfying the conditions in \cref{HY_th} for the generator $A$. Finally, we consider the problem function $\Xi_{C_0}:\Omega_{C_0}\rightarrow l^2(\mathbb{N}), (A,u_0,t)\mapsto \exp(tA)u_0$. In other words, the computation of the solution of \eqref{cauchy_prob}. The following theorem provides a positive answer to Q.1 in the introduction.

\begin{theorem}[$C_0$-semigroups on $l^2(\mathbb{N})$ computed with error control]\label{disc_thm1}
There exists an algorithm $\Gamma$ using $\Lambda_{C_0}$ such that for any $\epsilon>0$ and $(A,u_0,t)\in\Omega_{C_0}$,
$$\setlength\abovedisplayskip{5pt}\setlength\belowdisplayskip{5pt}
\|\Gamma(A,u_0,t,\epsilon)-\exp(tA)u_0\|\leq \epsilon.
$$
\end{theorem}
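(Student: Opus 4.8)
The plan is to realize $\exp(tA)u_0$ via the Bromwich integral \eqref{bromwich_int}, but because that integrand is not absolutely convergent for a general $C_0$-semigroup, the first step is to introduce a \emph{regularized} functional calculus. Concretely, I would fix a large $\sigma>\omega$ and, rather than integrate $e^{zt}R(z,A)$ directly along the vertical line $\mathrm{Re}(z)=\sigma$, integrate a regularized version in which the integrand is multiplied by a factor that forces absolute convergence while introducing a controllable error. A natural choice is to use the identity $\exp(tA)u_0 = \frac{-1}{2\pi i}\int e^{zt} R(z,A)^{k} p_k(z)\,u_0\,dz$ for a suitable polynomial correction, or more simply to replace $\exp(tA)$ by $\exp(tA)R(\sigma,A)^k$ applied to a regularized vector and then correct; the Hille--Yosida bound \eqref{Hille_YBD}, $\|R(\lambda,A)^n\|\le M/(\mathrm{Re}(\lambda)-\omega)^n$, is exactly what makes the higher powers of the resolvent decay fast enough in $|\mathrm{Im}(z)|$ to guarantee absolute convergence of the regularized integral and to bound the regularization error by $\epsilon/3$, say, once $k$ is chosen large enough in terms of $t$, $M$, $\omega$, and $\epsilon$. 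The inputs $M(A)$ and $\omega(A)$ in $\Lambda_{C_0}$ are provided precisely so that all these constants are computable.

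Granting a regularized integrand with guaranteed tail decay, the second step is \textbf{quadrature with error control}. I would truncate the vertical contour to a finite segment $[\sigma-iR,\sigma+iR]$ (the tail bound from step one fixes $R$ in terms of $\epsilon$) and apply a convergent quadrature rule — the trapezoidal rule suffices here since we are not assuming analyticity, and we can drive the quadrature error below $\epsilon/3$ by taking the number of nodes large, using an explicit modulus-of-continuity bound on the regularized integrand that again flows from the Hille--Yosida estimates. At each of the finitely many quadrature nodes $z_j$ we must evaluate $R(z_j,A)u_0$ (or a power thereof applied to $u_0$), which reduces the problem to the \textbf{adaptive computation of resolvents} of an element of $\mathcal{C}(l^2(\mathbb{N}))$. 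This is where the matrix evaluation functions $f^{(1)}_{j,k,m}, f^{(2)}_{j,k,m}$ and the coefficient functions $f_{j,m}$ enter: since $z_j\in\rho(A)$ with a lower bound on $\mathrm{dist}(z_j,\mathrm{Sp}(A))$ coming from $\mathrm{Re}(z_j)-\omega>0$, I can solve the least-squares problem $\min_n \|(P_n A P_n - z_j)v - u_0\|$ over finite sections, using the fact that $\mathrm{span}\{e_n\}$ is a core for both $A$ and $A^*$ together with the $f^{(2)}$-data (which lets one control $\|A v\|$ and hence verify residuals rigorously). Taking $n$ adaptively large yields an approximation to $R(z_j,A)u_0$ with controllable error $\epsilon/(3\,\text{(number of nodes)})$, and the inexactness $2^{-m}$ in the input is handled by taking $m$ large.

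Assembling the three error contributions — regularization, contour truncation plus quadrature, and per-node resolvent error — gives the total bound $\epsilon$, and since each piece involves only finitely many evaluations chosen adaptively from $\Lambda_{C_0}$, the resulting map $\Gamma$ is a genuine arithmetic algorithm in the sense of \cref{Gen_alg}. The consistency condition (iii) is met because the adaptive choices (the values of $k$, $R$, the number of nodes, and the truncation sizes $n$ and precisions $m$) depend only on $\epsilon$, $t$, $M$, $\omega$ and on finitely many read evaluations.

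\textbf{The main obstacle} I anticipate is the resolvent step: unlike the finite-dimensional case, computing $R(z,A)u_0$ with \emph{rigorous a posteriori error bounds} for an arbitrary operator in $\mathcal{C}(l^2(\mathbb{N}))$ requires certifying convergence of the finite sections without any a priori knowledge of the decay rate of the true solution. The standard trick — controlling the residual $\|(A-z)v - u_0\|$ from below by $\mathrm{dist}(z,\mathrm{Sp}(A))\,\|v - R(z,A)u_0\|$ using the injectivity of $A-z$ — converts a small certified residual into a small solution error, but certifying the residual itself requires evaluating $\|(A-z)v\|$ for the computed truncated $v$, which is exactly what the $f^{(2)}$ data (inner products $\langle Ae_k,Ae_j\rangle$) are designed to make possible. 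I expect the bulk of the technical work to lie in showing that the least-squares residual over finite sections actually tends to zero as $n\to\infty$ (so the adaptive search terminates), which is where the core assumption on $\mathrm{span}\{e_n\}$ is essential.
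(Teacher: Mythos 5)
Your overall architecture---regularize the Bromwich integral, truncate and quadrate the contour, and compute each resolvent adaptively by least squares over finite sections with a posteriori residual certification via the $f^{(2)}$ data and the core assumption---is the same as the paper's, and your resolvent step is essentially its \cref{res_est1}. However, the regularization step, which is the crux of the whole argument for non-analytic semigroups, rests on a claim that is false. The Hille--Yosida bound \eqref{Hille_YBD} controls $\|R(z,A)^n\|$ by $M/(\mathrm{Re}(z)-\omega)^n$, which is \emph{constant} along a vertical contour: it gives no decay whatsoever in $|\mathrm{Im}(z)|$. For instance, for the diagonal operator on $l^2(\mathbb{N})$ with purely imaginary eigenvalues $\{in:n\in\mathbb{Z}\}$ (a unitary group generator with $M=1$, $\omega=0$), one has $\|R(\sigma+is,A)^k\|\geq (\sigma^2+\tfrac14)^{-k/2}$ for \emph{every} $s\in\mathbb{R}$. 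Consequently your first proposed identity, with integrand $e^{zt}R(z,A)^k p_k(z)u_0$, is not absolutely convergent for any $k$, and the idea that a ``regularization error'' can be driven below $\epsilon/3$ by taking $k$ large is a misconception: the correct regularized identity is \emph{exact} already for $k=2$, and the integrability comes not from operator resolvent powers but from an explicit scalar factor. The paper writes $\exp(tA)u_0=-(A-(\omega+2)I)^2 B u_0$ with $B=\frac{1}{2\pi i}\int_{\omega+1-i\infty}^{\omega+1+i\infty}e^{zt}(z-(\omega+2))^{-2}R(z,A)\,dz$; on the line $\mathrm{Re}(z)=\omega+1$ one has $\|R(z,A)\|\leq M$ by \eqref{Hille_YBD} and $|z-(\omega+2)|^{-2}=(1+s^2)^{-1}$, and it is this scalar decay that yields absolute convergence.

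This exposes the idea genuinely missing from your proposal: how to \emph{undo} the regularization with error control. The identity above requires applying the unbounded operator $(A-(\omega+2)I)^2$, and you cannot apply it to an already computed approximation of $Bu_0$---an unbounded operator applied to an $\epsilon$-accurate vector has uncontrolled error. The paper's resolution is an ordering trick: first replace $u_0$ by a finitely supported $u_0^\epsilon$ (\cref{boring}); then apply $(A-(\omega+2)I)$ twice to finitely supported vectors, which is computable from the matrix evaluation functions (again via \cref{boring}); the commutation $(A-(\omega+2)I)B=B(A-(\omega+2)I)$ on $\mathcal{D}(A)$ lets the \emph{bounded} operator $B$ act last, and the errors committed in forming $u_1^\epsilon\approx(A-(\omega+2)I)u_0^\epsilon$ and $u_2^\epsilon\approx-(A-(\omega+2)I)u_1^\epsilon$ are propagated through the operator-norm bounds $\|(A-(\omega+2)I)B\|\leq M^2e^{\omega t}/2$ and $\|B\|\leq M^2e^{\omega t}/4$, both consequences of \eqref{Hille_YBD}. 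Only after this reduction does one quadrate the absolutely convergent integral $Bu_2^\epsilon$, which is your (correct) truncation-plus-quadrature step. Without this reordering your three-way error budget cannot be closed. A smaller, fixable slip: your residual-to-error conversion $\|(A-z)v-u_0\|\geq\mathrm{dist}(z,\mathrm{Sp}(A))\,\|v-R(z,A)u_0\|$ is false for non-normal operators (the resolvent norm can vastly exceed the reciprocal distance to the spectrum); the correct inequality is $\|v-R(z,A)u_0\|\leq\|R(z,A)\|\,\|(A-z)v-u_0\|$, with $\|R(z,A)\|$ bounded via \eqref{Hille_YBD}, which is how the paper---and your own appeal to the inputs $M$ and $\omega$---should proceed.
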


The algorithm is summarized in \cref{alg:l2_semigroup}, which outlines the key steps. First, we write the exponential $\exp(tA)$ as an absolutely convergent integral using a regularized functional calculus. We then reduce the problem to computing this integral acting on $\mathrm{span}\{e_n:n\in\mathbb{N}\}$. Finally, the integral is computed using quadrature and adaptive computation of the resolvent with error control.

\begin{algorithm}[t]
\textbf{Input:} $(A,u_0,t)\in\Omega_{C_0}$ through evaluation set $\Lambda_{C_0}$, where $A$ generates a strongly continuous semigroup, $u_0\in l^2(\mathbb{N})$ and $t>0$, as well as error bound $\epsilon>0$. \\
\vspace{-2mm}
$$
\text{Let }B=\left[\frac{1}{2\pi i}\int^{\omega+1+i\infty}_{\omega+1-i\infty}\frac{e^{zt}R(z,A)}{(z-(\omega+2))^2}\,dz\right]\text{ denote the regularized integral.}
$$
\begin{algorithmic}[1]
\STATE Compute $u_0^\epsilon=\sum_{j=1}^{J_0}u_{0j}^{(\epsilon)}e_j,$ such that $\|u_0-u_0^\epsilon\|\leq \epsilon \exp(-\omega t)/(2M)$.
\STATE Compute $u_1^\epsilon=\sum_{j=1}^{J_1}u_{1j}^{(\epsilon)}e_j$ such that $\|u_1^{\epsilon}-(A-(\omega+2)I)u_0^\epsilon\|\leq \epsilon \exp(-\omega t)/(2M^2)$.
\STATE Compute $u_2^\epsilon=\sum_{j=1}^{J_2}u_{2j}^{(\epsilon)}e_j$ such that $\|u_2^{\epsilon}+(A-(\omega+2)I)u_1^\epsilon\|\leq \epsilon \exp(-\omega t)/(2M^2)$.
\STATE Compute weights $w_j$ and nodes $z_j$ such that $\|B-\sum_{j=1}^Nw_j R(z_j,A)\|\leq\epsilon/16$.
\STATE Using the input bounds on the resolvent provided by the Hille--Yosida theorem and \cref{res_est1} to control the total error in computing the resolvents, compute $r_j^{\epsilon}\approx R(z_j,A)u_2^\epsilon$ so that $\sum_{j=1}^N|w_j|\|r_j^\epsilon-R(z_j,A)u_2^\epsilon\|\leq \epsilon/16.$
\end{algorithmic} \textbf{Output:} $\Gamma(A,u_0,t,\epsilon)=\sum_{j=1}^Nw_jr_j^\epsilon$, an $\epsilon$-accurate approximation of $\exp(tA)u_0$.
\caption{Algorithm for computing semigroups on $l^2(\mathbb{N})$ with error control. The details of how to do each step are provided in the proof of \cref{disc_thm1}. Steps 2, 3 and 5 require a solve-then-discretize approach for computing the action of $A$ and the resolvent $R(z,A)$ with error control via the available input information $\Lambda_{C_0}$. We have used the notation $\omega=\omega(A)$ and $M=M(A)$ (constants in \cref{HY_th}).}\label{alg:l2_semigroup}
\end{algorithm}

\begin{proof}[Proof of \cref{disc_thm1}]
Let $(A,u_0,t)\in\Omega_{C_0}$, and set $\omega=\omega(A)$ and $M=M(A)$ throughout the notation of this proof. It suffices to show that given any $\epsilon>0$, we can compute a $\epsilon$-accurate approximation of $\exp(tA)u_0$. We use the following \textit{regularization} to define the needed holomorphic functional calculus \cite{batty2013holomorphic}:
$$\setlength\abovedisplayskip{5pt}\setlength\belowdisplayskip{5pt}
\exp(tA)u_0=-(A-(\omega+2)I)^2\underbrace{\left[\frac{1}{2\pi i}\int^{\omega+1+i\infty}_{\omega+1-i\infty}\frac{e^{zt}R(z,A)}{(z-(\omega+2))^2}\,dz\right]}_{:=B}u_0,
$$

\vspace{-0.5mm}\noindent{}where the integral is taken in the direction $\omega+1-i\infty$ to $\omega+1+i\infty$. The point of this regularization is that the integral now converges absolutely. With this representation in hand, we can now prove \cref{disc_thm1} via the following three steps.

\textbf{Step 1: Reduction to a finite sum.} Using \cref{boring} in the appendix, we can compute $u_0^\epsilon=\sum_{j=1}^{J_0}u_{0j}^{(\epsilon)}e_j,$ such that $\|u_0-u_0^\epsilon\|\leq \epsilon \exp(-\omega t)/(2M)$ (approximating this quantity from below using $\Lambda_{C_0}$). This bound is chosen so that, using $\|\exp(tA)\|\leq M\exp(\omega t)$, we have $\|\exp(tA)(u_0-u_0^\epsilon)\|\leq\epsilon/2$. So it suffices to compute $\exp(tA)u_0^\epsilon$ to accuracy $\epsilon/2$.

\textbf{Step 2: Reduction to an absolutely convergent integral.} Since $u_0^\epsilon$ is a finite sum of elements in $\mathcal{D}(A)$, $u_0^\epsilon\in\mathcal{D}(A)$ and we can rewrite the exponential as
$$\setlength\abovedisplayskip{5pt}\setlength\belowdisplayskip{5pt}
\exp(tA)u_0^\epsilon=-(A-(\omega+2)I)B(A-(\omega+2)I)u_0^\epsilon.
$$
Note that the evaluation functions in $\Lambda_{C_0}$ allow us to compute the inner products
$
\langle(A-(\omega+2)I)u_0^\epsilon,(A-(\omega+2)I)u_0^\epsilon\rangle\text{ and } \langle(A-(\omega+2)I)u_0^\epsilon,e_j\rangle$ to any specified accuracy. We can therefore apply \cref{boring} to the vector $(A-(\omega+2)I)u_0^\epsilon$ to compute $u_1^\epsilon=\sum_{j=1}^{J_1}u_{1j}^{(\epsilon)}e_j$ such that $\|u_1^{\epsilon}-(A-(\omega+2)I)u_0^\epsilon\|\leq \epsilon \exp(-\omega t)/(2M^2)$. Since
\begin{equation}\setlength\abovedisplayskip{5pt}\setlength\belowdisplayskip{5pt}
\label{split1}
\left\|{(A\!-\!(\omega+2)I)}B\right\|\!\leq \!\|\exp(tA)\|\|R(\omega+2,A)\|\!\leq\! {M^2e^{\omega t}}/{2},
\end{equation}
we must have that $\|\exp(tA)u_0^\epsilon+(A-(\omega+2)I)Bu_1^\epsilon\|\leq \epsilon/{4}.$ So it suffices to compute $-(A-(\omega+2)I)Bu_1^\epsilon$ to accuracy $\epsilon/4$. Since $u_1^\epsilon\in\mathcal{D}(A)$ (it is a finite sum of elements in $\mathcal{D}(A)$), $-(A-(\omega+2)I)Bu_1^\epsilon=-B(A-(\omega+2)I)u_1^\epsilon$. Using the same argument as before, we can compute $u_2^\epsilon=\sum_{j=1}^{J_2}u_{2j}^{(\epsilon)}e_j$ such that $\|u_2^\epsilon+(A-(\omega+2)I)u_1^\epsilon\|\leq\epsilon \exp(-\omega t)/(2M^2)$. By \cref{HY_th},
$$\setlength\abovedisplayskip{6pt}\setlength\belowdisplayskip{6pt}
\left\|B\right\|\leq \|\exp(tA)\|\|R(\omega+2,A)^2\|\leq {M^2\exp(\omega t)}/{4}.
$$
It follows that $\|Bu_2^\epsilon+(A-(\omega+2)I)Bu_1^\epsilon\leq\|\leq \epsilon/8$. Combining these inequalities, we therefore have that
$$\setlength\abovedisplayskip{6pt}\setlength\belowdisplayskip{6pt}
\|\exp(tA)u_0-Bu_2^\epsilon\|\leq \frac{\epsilon}{2}+\frac{\epsilon}{4}+\frac{\epsilon}{8}.
$$
Hence it suffices to compute an $\epsilon/8$-approximation of $Bu_2^\epsilon$. Since $u_2^\epsilon$ is a finite linear combination of canonical basis vectors, it suffices to show that for any $l\in\mathbb{N}$ we can compute the following to any given accuracy:
\begin{equation}\setlength\abovedisplayskip{5pt}\setlength\belowdisplayskip{5pt}
\label{cauchy_c0D}
B e_l=\left[\frac{1}{2\pi i}\int^{\omega+1+i\infty}_{\omega+1-i\infty}\frac{e^{zt}R(z,A)}{(z-(\omega+2))^2}\,dz\right]e_l.
\end{equation}

\textbf{Step 3: Approximation of the integral through adaptive approximation of the resolvent and quadrature.} To prove this, fix $l\in\mathbb{N}$ and define
\begin{equation}\setlength\abovedisplayskip{6pt}\setlength\belowdisplayskip{6pt}
\label{F_def}
F(s)=\frac{\exp((\omega+1)t+ist)}{2\pi(is-1)^2}R(\omega+1+is,A)e_l\in l^2(\mathbb{N}).
\end{equation}
Then \eqref{cauchy_c0D} can be re-written as $\int_{\mathbb{R}}F(s)ds.$ Note that by \eqref{Hille_YBD},
\begin{equation}\setlength\abovedisplayskip{6pt}\setlength\belowdisplayskip{6pt}
\label{F_bdd2}
\|F(s)\|\leq {M\exp((1+\omega)t)}{(1+s^2)^{-1}}/(2\pi).
\end{equation}
Given $\hat\epsilon>0$, we can therefore compute a cut-off $L\in\mathbb{N}$ such that
$$\setlength\abovedisplayskip{6pt}\setlength\belowdisplayskip{6pt}
\Big\|\int_{|y|>L}F(s)\,ds\Big\|\leq \frac{M\exp((1+\omega)t)}{\pi}\int_{s>L}{(1+s^2)}^{-1}\,ds\leq \frac{M\exp((1+\omega)t)}{\pi L}\leq\hat\epsilon.
$$
Hence it suffices to compute $\int_{-L}^L F(s)\,ds$ to arbitrary accuracy. By bounding each term of the derivative separately and using \eqref{Hille_YBD}, we have
\begin{equation}\setlength\abovedisplayskip{6pt}\setlength\belowdisplayskip{6pt}
\label{Fd_bdd2}
\|F'(s)\|\leq (3+t)\|F(s)\|\leq{(3+t)M\exp((1+\omega)t)}/({2\pi}).
\end{equation}
Using \eqref{F_bdd2} and \eqref{Fd_bdd2}, for a given $\hat\epsilon>0$, we can compute an integer $m$ such that
$$\setlength\abovedisplayskip{6pt}\setlength\belowdisplayskip{6pt}
\Big\|\int_{-L}^L F(s)\,ds-\frac{1}{m}\sum_{j=-L\cdot m+1}^{L\cdot m}F\left(\frac{j}{m}\right)\Big\|\leq \frac{2L\|F'\|_{\infty}}{m} \leq \hat\epsilon.
$$
Hence it suffices to be able to compute $F(q)$ for rational $q$ to arbitrary accuracy. This follows from \cref{res_est1} (applied to the shifted operator $T=A-(\omega+1+iq)I$), standard approximations of ${\exp((\omega+1)t+iqt)}/({2\pi(iq-1)^2})$ and the fact that we can use \eqref{Hille_YBD} to bound the resolvent norm appearing in \cref{res_est1}.
\end{proof}

In practice, one uses a method such as Gaussian quadrature for the truncated integral in the final step. The resulting error bounds after truncation decrease exponentially in the number of quadrature points. Optimal contours could also be found by studying regions of analyticity and bounds on the function $F$ in \eqref{F_def}. For large times, one can also use quadrature methods for oscillatory integrals \cite{deano2017computing}. Such bounds are more complicated, so we present the above contour and quadrature rule in the proof. We provide a detailed analysis of quadrature for analytic semigroups in \cref{analytic_semigp_sec}.

\section{Extension to partial differential operators on $L^2(\mathbb{R}^d)$}\label{PDE_section} We now extend the above technique to PDOs. As an example, we consider the closure, denoted by $A$, of the initial operator
\begin{equation}\setlength\abovedisplayskip{5pt}\setlength\belowdisplayskip{6pt}
\label{PDO_init}
[\tilde{A}u](x)=\!\!\!\!\sum_{k\in\mathbb{Z}_{\geq0}^d,\left|k\right|\leq N}\!\!\!\!\! a_k(x)\partial^ku(x),\quad \mathcal{D}(\tilde{A})=\{u\text{ smooth with compact support}\}.
\end{equation}

\vspace{-2mm}\noindent{}We use multi-index notation with $\left|k\right|=\max\{\left|k_1\right|,...,\left|k_d\right|\}$ and $\partial^k=\partial^{k_1}_{x_1}\partial^{k_2}_{x_2}...\partial^{k_d}_{x_d}$. We assume that $\tilde{A}$ is closable and that the coefficients $a_k(x)$ are complex-valued measurable functions on $\mathbb{R}^d$. For dimension $d$ and $r>0$, consider the space
$$\setlength\abovedisplayskip{6pt}\setlength\belowdisplayskip{6pt}
\mathcal{A}_r=\{f\in \mathrm{Meas}([-r,r]^d):\left\|f\right\|_{\infty}+\mathrm{TV}_{[-r,r]^d}(f)<\infty\},
$$

\vspace{-1mm}\noindent{}where $\mathrm{Meas}([-r,r]^d)$ denotes the set of measurable functions on the hypercube $[-r,r]^d$ and $\mathrm{TV}_{[-r,r]^d}$ the total variation norm in the sense of Hardy and Krause \cite{niederreiter1992random}. This space becomes a Banach algebra when equipped with the norm \cite{blumlinger1989topological}
$$\setlength\abovedisplayskip{6pt}\setlength\belowdisplayskip{6pt}
\left\|f\right\|_{\mathcal{A}_r}:=\left\|f|_{[-r,r]^d}\right\|_{\infty}+(3^d+1)\mathrm{TV}_{[-r,r]^d}(f).
$$

\vspace{-1mm}\noindent{}We let $\Omega_{\mathrm{PDE}}$ be all such $(A,u_0,t)$ with $u_0\in L^2(\mathbb{R}^d)$ and $t>0$, for which $A$ generates a strongly continuous semigroup on $L^2(\mathbb{R}^d)$ and the following hold:

\vspace{1mm}

\begin{enumerate}[leftmargin=10mm]
	\item[(1)] The set of smooth, compactly supported functions forms a core of $A$ and $A^*$.
	\item[(2)] \underline{At most polynomial growth:} There exist positive constants $C_k$ and integers $B_k$ such that almost everywhere on $\mathbb{R}^d$, $|a_k(x)|\leq C_k(1+|x|^{2B_k}).$
	\item[(3)] \underline{Locally bounded total variation:} For all $r>0$, $u_0|_{[-r,r]^d},a_k|_{[-r,r]^d}\in\mathcal{A}_r$.
\end{enumerate}

\vspace{1mm}
These assumptions are very mild as the class of functions with locally bounded variation includes discontinuous functions and functions with arbitrary wild oscillations at infinity. For input $(A,u_0,t)\in\Omega_{\mathrm{PDE}}$, we define $\Lambda_{\mathrm{PDE}}$ as the set of evaluation functions (where ranges of indices have been suppressed for notational convenience):

\vspace{1mm}

\begin{itemize}[leftmargin=10mm]
	\item[(a)] \underline{Pointwise coefficient evaluations:} $\left\{S_{k,q,m}\right\}$ such that for all $m\in\mathbb{N}$,
$$\setlength\abovedisplayskip{4pt}\setlength\belowdisplayskip{4pt}
\left|S_{k,q,m}(A)-a_k(q)\right|\leq 2^{-m},\quad \forall q\in\mathbb{Q}^d.
$$
\item[(b)] \underline{Pointwise initial condition evaluations:} $\left\{S_{q,m}\right\}$ such that for all $m\in\mathbb{N}$,
$$\setlength\abovedisplayskip{4pt}\setlength\belowdisplayskip{4pt}
\left|S_{q,m}(u_0)-u_0(q)\right|\leq 2^{-m},\quad \forall q\in\mathbb{Q}^d.
$$
\item[(c)] \underline{Bounds on growth and total variation:} $\left\{C_k,B_k\right\}$ such that the bound in (2) holds and positive sequences $\left\{b_n\right\}_{n\in\mathbb{N}}$ and $\left\{c_n\right\}_{n\in\mathbb{N}}$ such that for all $n\in\mathbb{N}$,
$$\setlength\abovedisplayskip{4pt}\setlength\belowdisplayskip{4pt}
\max_{\left|k\right|\leq N}{\left\|a_k\right\|_{\mathcal{A}_n}}\leq{b_n},\quad {\left\|u_0\right\|_{\mathcal{A}_n}}\leq{c_n}.
$$
\item[(d)] \underline{Decay of initial condition:} A positive sequence $\left\{d_n\right\}_{n\in\mathbb{N}}$, such that
$$\setlength\abovedisplayskip{4pt}\setlength\belowdisplayskip{4pt}
\|u_0|_{[-n,n]^d}-u_0\|_{L^2(\mathbb{R}^d)}\leq d_n,\quad \lim_{n\rightarrow\infty}d_n=0,
$$
\end{itemize}
together with constants $M=M(A)>0$ and $\omega=\omega(A)>0$ satisfying the conditions in \cref{HY_th} for the generator $A$. We consider the problem function $\Xi_{\mathrm{PDE}}:\Omega_{\mathrm{PDE}}\rightarrow L^2(\mathbb{R}^d),$ $(A,u_0,t)\mapsto \exp(tA)u_0$. In other words, the computation of the solution of \eqref{cauchy_prob} for PDOs $A$ on $L^2(\mathbb{R}^d)$. The following theorem provides a positive answer to Q.2 in the introduction.

\begin{theorem}[PDO $C_0$-semigroups on $L^2(\mathbb{R}^d)$ computed with error control]\label{PDO_thm1}
There exists an algorithm $\Gamma$ using $\Lambda_{\mathrm{PDE}}$ such that for any $\epsilon>0$ and $(A,u_0,t)\in\Omega_{\mathrm{PDE}}$,
$$\setlength\abovedisplayskip{4pt}
\|\Gamma(A,u_0,t,\epsilon)-\exp(tA)u_0\|\leq \epsilon.
$$
\end{theorem}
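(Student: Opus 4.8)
The plan is to reduce Q.2 to Q.1 by fixing an orthonormal basis of $L^2(\mathbb{R}^d)$, transporting $A$ to an operator on $l^2(\mathbb{N})$, and then invoking \cref{disc_thm1}. I would use the tensor-product Hermite functions $\{\phi_n\}_{n\in\mathbb{N}}$, which form an orthonormal basis of $L^2(\mathbb{R}^d)$ and are explicitly known (each is a polynomial times a Gaussian). The unitary map $\phi_n\mapsto e_n$ identifies $L^2(\mathbb{R}^d)$ with $l^2(\mathbb{N})$ and turns $A$ into an operator with matrix $A_{j,k}=\langle A\phi_k,\phi_j\rangle$. Two things must then be established: (i) the transported operator lies in $\mathcal{C}(l^2(\mathbb{N}))$, so that \cref{disc_thm1} applies; and (ii) every evaluation function in $\Lambda_1$, together with the norm and coefficient functions of $u_0$, can be synthesized to accuracy $2^{-m}$ using only the point-sampling data in $\Lambda_{\mathrm{PDE}}$. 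Since a general algorithm may compose finitely many evaluations with arithmetic, once (i) and (ii) hold, running the algorithm of \cref{disc_thm1} with $M=M(A)$, $\omega=\omega(A)$ and these synthesized inexact inputs returns the desired $\epsilon$-accurate approximation.

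The heart of the argument, and what I expect to be the \textbf{main obstacle}, is (ii): computing the inner products to arbitrary accuracy from pointwise samples of the coefficients. Each required quantity is an integral over $\mathbb{R}^d$ of a coefficient (or a product such as $a_k\overline{a_l}$, or $u_0$, or $|u_0|^2$) against an explicit Hermite-based factor $g$. I would proceed in three stages. First, truncate the integral to a cube $[-r,r]^d$: the factor $g$ decays like a Gaussian, the coefficients grow at most polynomially by assumption (2), and the tail of $u_0$ is controlled by $\{d_n\}$ in (d) via Cauchy--Schwarz, so the truncation error is explicitly bounded and driven below any threshold by taking $r$ large. Second, integrate over $[-r,r]^d$ by a quasi-Monte Carlo rule. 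This is exactly where the Banach-algebra structure of $\mathcal{A}_r$ is used: the integrand is a product of functions in $\mathcal{A}_r$, hence lies in $\mathcal{A}_r$ with total variation bounded multiplicatively by the coefficient norm (controlled by $\{b_n\},\{c_n\}$ in (c)) times the explicitly computable norm $\|g\|_{\mathcal{A}_r}$. The Koksma--Hlawka inequality then bounds the quadrature error by the total variation times the star discrepancy of the nodes, and a low-discrepancy set with rational nodes makes this error tend to zero at a controllable rate. Crucially, quasi-Monte Carlo needs only bounded variation, not smoothness, of the coefficients, which is precisely the regularity available. Third, replace the exact nodal values by the $2^{-m'}$-accurate samples from (a),(b) and propagate these perturbations through the finite, explicitly weighted quadrature sum, yielding a total error that can be made at most $2^{-m}$.

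For (i), I would show that $\mathrm{span}\{\phi_n\}$ is a core of both $A$ and $A^*$. Because the Hermite functions span a dense subspace of the Schwartz space in the Schwartz topology, every smooth compactly supported function is a Schwartz-limit of Hermite combinations, so assumption (1) reduces the task to upgrading Schwartz convergence to graph-norm convergence. For $A$ this is straightforward: if $p_n\to u$ in Schwartz norm then $\partial^k p_n\to\partial^k u$ with any polynomial weight, so the bound $|a_k(x)|\le C_k(1+|x|^{2B_k})$ gives $a_k\partial^k p_n\to a_k\partial^k u$ in $L^2$, hence $Ap_n\to Au$. The delicate direction is $A^*$: the non-smoothness of the coefficients precludes an elementary pointwise formula for $A^*$, so here I would lean directly on the assumption that $C_c^\infty$ is a core of $A^*$ and argue core-preservation under the same Schwartz approximation.

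Finally, assembling the pieces: given $\epsilon>0$, run the algorithm of \cref{disc_thm1} with the supplied constants $M(A),\omega(A)$, and whenever it queries a matrix or coefficient evaluation within tolerance $2^{-m}$, answer with the quasi-Monte Carlo value guaranteed to that tolerance by the construction above. The output then inherits the $\epsilon$-accuracy guarantee of \cref{disc_thm1}, proving the theorem. The analogous argument for general bases and domains follows by replacing $\{\phi_n\}$ with any explicit orthonormal system enjoying comparable decay and smoothness.
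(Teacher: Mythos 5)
Your proposal follows essentially the same route as the paper's proof: identify $L^2(\mathbb{R}^d)$ with $l^2(\mathbb{N})$ via tensor-product Hermite functions, synthesize the $\Lambda_{C_0}$ evaluation functions from the point samples in $\Lambda_{\mathrm{PDE}}$ by truncating the integrals (polynomial growth of the $a_k$, Gaussian decay of the Hermite factors, and the decay sequence $\{d_n\}$ for $u_0$) and then applying quasi-Monte Carlo quadrature with Koksma--Hlawka error bounds controlled by the $\mathcal{A}_r$ total-variation data, and finally invoke \cref{disc_thm1}; this is exactly the paper's argument, which outsources the quadrature details and the core property to \cite{colbrook2019foundations}. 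The only soft spot is the one you flag yourself: showing that the Hermite span is a core of $A^*$ cannot be done by ``the same Schwartz approximation'' as for $A$, since without a pointwise formula for $A^*$ (the coefficients being merely of bounded variation) Schwartz convergence does not control the $A^*$-graph norm, and this is precisely the step for which the paper leans on the machinery of \cite{colbrook2019foundations} rather than a direct argument.
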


The algorithm is summarized in \cref{alg:PDO_semigroup}. \cref{PDO_thm1} says the following for PDOs with coefficients that are polynomially bounded and have locally bounded total variation. If the PDO generates a strongly continuous semigroup, then we can compute the semigroup with error control via point sampling the coefficients and initial condition. For example, we can solve the time-independent Schr\"odinger equation \eqref{cauchy_prob_scrod} on $L^2(\mathbb{R}^d)$ with error control (here $M(A)=1,w(A)=0$), for potentials satisfying the conditions in (2) and (3). As discussed in \cref{sec_intro_sec}, this is a decidedly difficult problem. Moreover, the result in \cref{PDO_thm1} is much more general than just Schr\"odinger operators. 

\begin{algorithm}[t]
\textbf{Input:} $(A,u_0,t)\in\Omega_{\mathrm{PDE}}$ through evaluation set $\Lambda_{\mathrm{PDE}}$, where PDO $A$ generates a strongly continuous semigroup, $u_0\in L^2(\mathbb{R}^d)$ and $t>0$, as well as error bound $\epsilon>0$.
\vspace{-4mm}
\begin{algorithmic}[1]
\STATE Convert the problem to one on $l^2(\mathbb{N})$ via
\begin{align*}
&\langle \hat Ae_k,\hat Ae_j \rangle = \int_{\mathbb{R}^d} (A\psi_{\pmb{m}(k)})\overline{(A\psi_{\pmb{m}(j)})}\,dx,\quad \langle \hat Ae_k,e_j \rangle = \int_{\mathbb{R}^d} (A\psi_{\pmb{m}(k)})\psi_{\pmb{m}(j)}\,dx \\
&\langle \hat u_0,e_j \rangle = \int_{\mathbb{R}^d} u_0\psi_{\pmb{m}(j)}\,dx, \quad \langle \hat u_0,\hat u_0 \rangle = \int_{\mathbb{R}^d} u_0\overline{u_0}\,dx,
\end{align*}
where $\{\psi_{m(j)}\}_{j\in\mathbb{N}}$ denotes an ordering of a tensor product Hermite basis of $L^2(\mathbb{R}^d)$. The integrals are computed using the total variation and growth bounds of coefficients/intial condition and quasi-Monte Carlo numerical integration.
\STATE Using step 1 to provide the $\Lambda_{C_0}$ evaluation functions, apply \cref{alg:l2_semigroup} with input $(\hat A, \hat u_0,t)$ and $\epsilon$. Call this output $\hat\Gamma(\hat A,\hat u_0,t,\epsilon)$.
\STATE Using the finite number of coefficients of $\hat\Gamma(\hat A,\hat u_0,t,\epsilon)$ computed in step 2, generate the solution
$$
\Gamma(A,u_0,t)=\sum_{j}[\hat\Gamma(\hat A,\hat u_0,t,\epsilon)]_j\psi_{\pmb{m}(j)}\in L^2(\mathbb{R}^d).
$$
\end{algorithmic} \textbf{Output:} $\Gamma(A,u_0,t)$, an $\epsilon$-accurate approximation of $\exp(tA)u_0$ expressed as a finite linear combination of tensor products of Hermite functions.
\caption{Algorithm for computing PDO semigroups on $L^2(\mathbb{R}^d)$ with error control. The details of how to do each step are provided in the proof of \cref{PDO_thm1}. The choice of Hermite functions is simply for convenience - the algorithm for other choices of bases and for different domains is analogous.}\label{alg:PDO_semigroup}
\end{algorithm}

\begin{proof}[Proof of \cref{PDO_thm1}] The proof strategy is to reduce the computational problem to the case covered by \cref{disc_thm1}. We consider the Hermite functions
$$\setlength\abovedisplayskip{4pt}\setlength\belowdisplayskip{4pt}
\psi_m(x)=(2^{m}m!\sqrt{\pi})^{-1/2}e^{-x^2/2}H_{m}(x),\quad H_m(x)=(-1)^me^{x^2}\frac{d^m}{dx^m}e^{-x^2},\quad m\in\mathbb{Z}_{\geq 0}.
$$
As an orthonormal basis of $L^2(\mathbb{R}^d)$ we consider tensor products $
\psi_{m}=\psi_{m_1}\otimes ...\otimes \psi_{m_d},$ for $m=(m_1,...,m_d)\in\mathbb{Z}_{\geq0}^d.$ Let $(A,u_0,t)\in\Omega_{\mathrm{PDE}}$. The conditions on $A$ imply that the orthonormal basis of tensor products of Hermite functions forms a core of $A$ and $A^*$ \cite{colbrook2019foundations}. We can choose a suitable ordering (such as the hyperbolic cross \cite{lubich_qm_book}) $\pmb{m}:\mathbb{N}\rightarrow \mathbb{Z}_{\geq0}^d$ of these basis functions to identify $L^2(\mathbb{R}^d)\cong l^2(\mathbb{N})$. The point of this is that any $(A,u_0,t)$ can be represented by $(\hat A, \hat u_0,t)\in\Omega_{C_0}$ through the inner products
\begin{align}
&\langle \hat Ae_k,\hat Ae_j \rangle = \int_{\mathbb{R}^d} (A\psi_{\pmb{m}(k)})\overline{(A\psi_{\pmb{m}(j)})}\,dx,\quad \langle \hat Ae_k,e_j \rangle = \int_{\mathbb{R}^d} (A\psi_{\pmb{m}(k)})\psi_{\pmb{m}(j)}\,dx \label{PDENEED1}\\
&\langle \hat u_0,e_j \rangle = \int_{\mathbb{R}^d} u_0\psi_{\pmb{m}(j)}\,dx. \label{PDENEED3}
\end{align}
\cref{PDO_thm1} follows from \cref{disc_thm1} if we can show that the evaluation functions in $\Lambda_{C_0}$ can be computed via an algorithm from the evaluation functions in $\Lambda_{\mathrm{PDE}}$.

In \cite{colbrook2019foundations} (which considered computation of spectra), quasi-Monte Carlo numerical integration was used to show that the inner products in \eqref{PDENEED1} can be computed with error control using $\Lambda_{\mathrm{PDE}}$. To finish the proof, we need only consider the evaluation functions in \eqref{needinlemma}. Let $\delta>0$ and compute $n\in\mathbb{N}$ such that $d_n\leq \delta$. It follows that
$$\setlength\abovedisplayskip{6pt}\setlength\belowdisplayskip{6pt}
\left| \langle u_0,u_0\rangle-\langle u_0|_{[-n,n]^d},u_0|_{[-n,n]^d}\rangle\right|\leq \delta^2, \quad \left|\langle u_0,e_j\rangle-\langle u_0|_{[-n,n]^d},e_j\rangle\right|\leq \delta.
$$
Since $\delta>0$ was arbitrary, it suffices to show that for any $n\in\mathbb{N}$, suitable $f_{0,m}$ and $f_{j,m}$ in \eqref{needinlemma} can be computed for $u_0|_{[-n,n]^d}$ instead of $u_0$. But this follows from the results of \cite{colbrook2019foundations} that were used to compute the integrals in \eqref{PDENEED1} (the bound in assumption (2) of the definition of $\Omega_{\mathrm{PDE}}$ is used to truncate the relevant integrals - it is not needed for $u_0$ because of the compact support of $u_0|_{[-n,n]^d}$).
\end{proof}

The choice of Hermite functions in the proof is convenient for many practical scenarios (e.g., it generates sparse matrices for polynomial coefficients) and is chosen since it allows a very large class of coefficients to be treated, namely those satisfying assumptions (2) and (3). There are, of course, many different basis choices for $L^2(\mathbb{R}^d)$ for which similar results can be proven with correspondingly different classes of coefficients for $A$. The key point is the ability to reduce the problem to $\{\Xi_{C_0},\Omega_{C_0},l^2(\mathbb{N}),\Lambda_{C_0}\}$ through computation of the relevant integrals and apply \cref{disc_thm1}. The choice of basis can also impact the computational efficiency, either in the matrix representation of the operator itself or the solution's representation. Similarly, Hilbert spaces different to $L^2(\mathbb{R}^d)$ and computational domains different to $\mathbb{R}^d$ can be treated: examples are provided in \cref{MT_example1} and \cref{num_exams_sec}.

\section{Stable and rapidly convergent quadrature for analytic semigroups}\label{analytic_semigp_sec}

\begin{figure}
\small
  \centering
	\begin{minipage}[b]{0.48\textwidth}
    \begin{overpic}[width=\textwidth,clip]{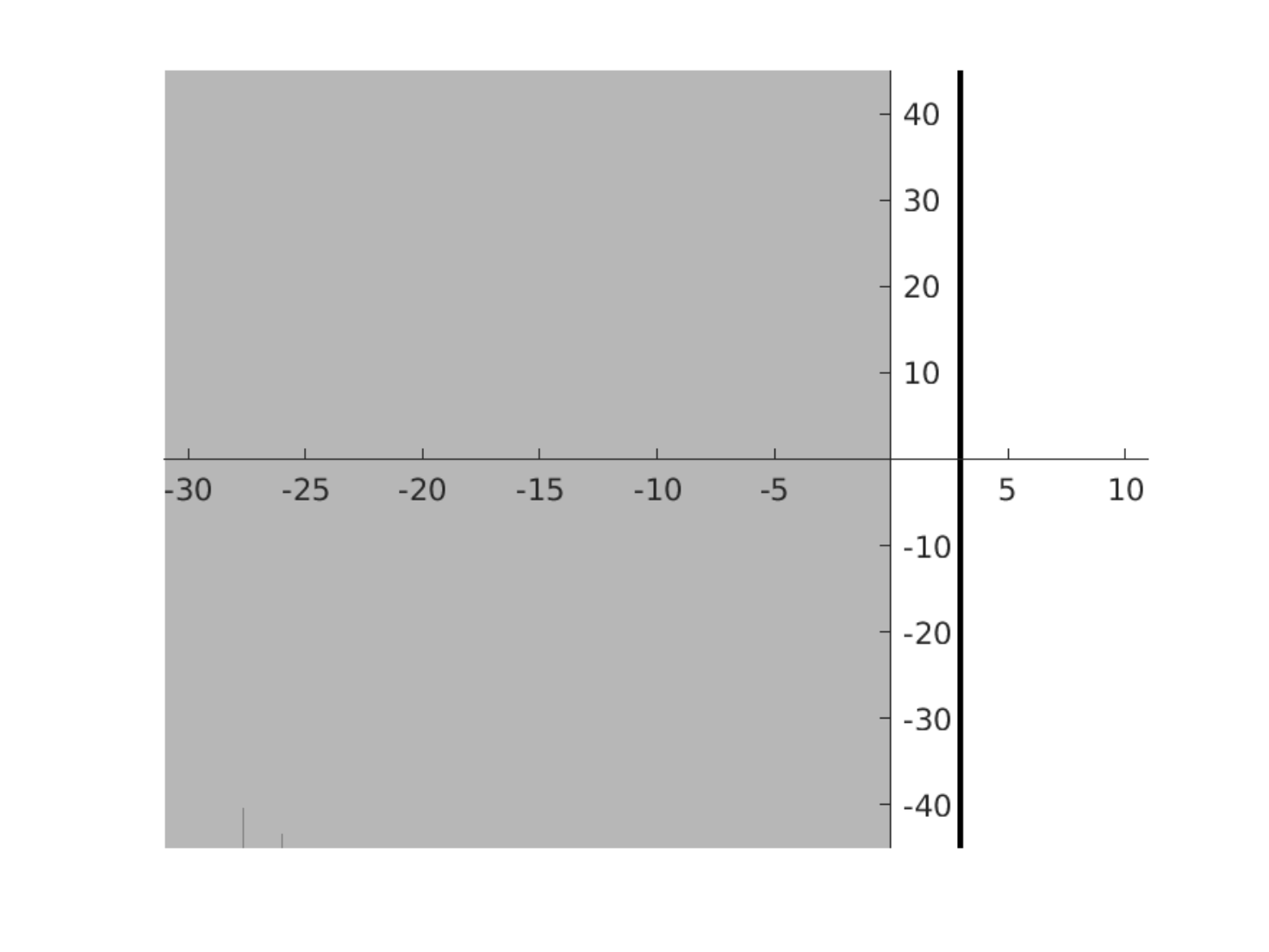}
		\put (78,59) {$\displaystyle \gamma$}
		\put (53,12) {$\displaystyle \mathrm{Im}(z)$}
		\put (82,43) {$\displaystyle \mathrm{Re}(z)$}
     \end{overpic}
  \end{minipage}
	\hfill
  \begin{minipage}[b]{0.48\textwidth}
    \begin{overpic}[width=\textwidth,trim={32mm 92mm 32mm 92mm},clip]{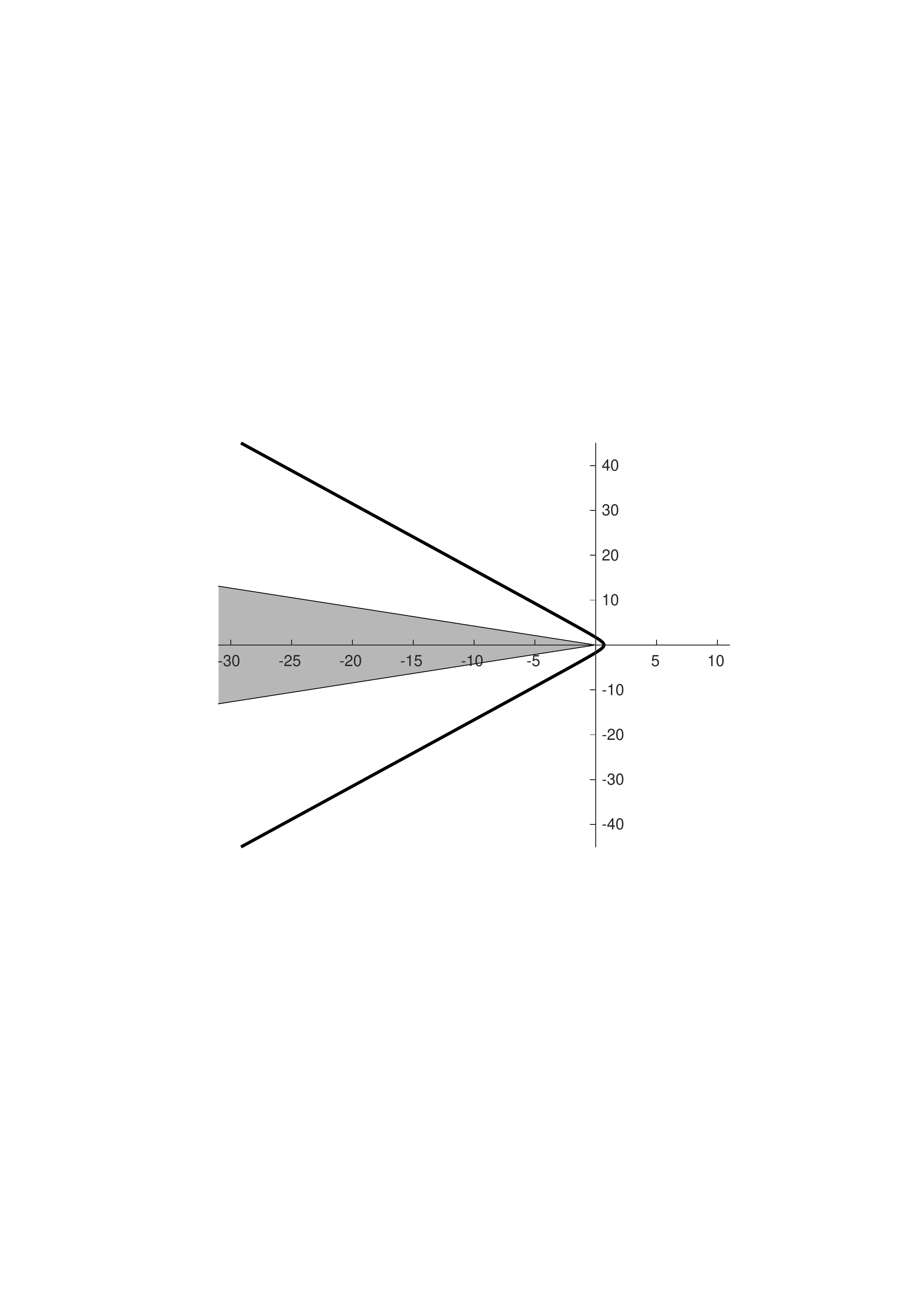}
		\put (47,59) {$\displaystyle \gamma$}
		\put (18,43) {$\displaystyle S_{\delta}^{c}$}
		\put (53,12) {$\displaystyle \mathrm{Im}(z)$}
		\put (82,43) {$\displaystyle \mathrm{Re}(z)$}
     \end{overpic}
  \end{minipage}
\caption{Left: The spectral picture of a general semigroup with $\omega=0$. The shaded region encloses the spectrum of the generator $A$ and the contour $\gamma$ is parallel to the imaginary axis. Right: Example sector containing the spectrum of a generator $A$ of an analytic semigroup (complement of $S_{\delta}$ for $\delta=0.4$) and the corresponding deformed hyperbolic contour $\gamma$.}
\label{fig:cont_examples}
\end{figure}

In this section, we assume that $A$ generates an \textit{analytic semigroup} and that the sector $S_{\delta}=\{z\in\mathbb{C}:\mathrm{arg}(z)<\pi-\delta\}$ is contained in $\rho(A)$ for some $\delta\in[0,\pi/2)$. This is shown in \cref{fig:cont_examples} (right), where we remind the reader that $\rho(A)=\mathbb{C}\backslash \mathrm{Sp}(A)$. We take the shift $\omega$ in the definition of sectorial operators to be zero without loss of generality since it can be factored back in via multiplication by a suitable exponential.

As discussed in \cref{sec_intro_sec}, there exists a large literature on quadrature of the integral appearing in \eqref{bromwich_int}. The typical approach deforms the contour of integration into a contour $\gamma$ (parametrized by $x\in\mathbb{R}$) that begins and ends in the left half-plane, such that $\mathrm{Re}(\gamma(x))\rightarrow-\infty$ as $|x|\rightarrow\infty$. This idea can be traced back to the 1950s and Talbot's doctoral student Green \cite{green1955calculation}, as well as Butcher \cite{butcher1957numerical}. Later, Talbot published a landmark paper \cite{talbot1979accurate}, where he generalized and improved the earlier work of Green. Popular contour choices include variations of Talbot's contour \cite{dingfelder2015improved}, parabolic contours \cite{gavrilyuk2001exponentially,weideman2019gauss} and hyperbolic contours such as \cref{fig:cont_examples} (right) \cite{lopez2006spectral,sheen2003parallel} (see \cite{trefethen2006talbot} for interpretations in terms of rational approximations). After such a deformation, the trapezoidal rule provides a simple and very effective quadrature rule
\begin{equation}
\label{bromwich_quad_example}
\exp(tA)u_0\approx \frac{-h}{2\pi i}\sum_{j=-N}^Ne^{z_jt}R(z_j,A)\gamma'(jh),\quad z_j=\gamma(jh).
\end{equation}
For example, \cite[Table 1]{weideman2019gauss} provides a comparison of the order of exponential convergence in the number of quadrature points for various methods in the literature.

In our setting, however, there are two important factors that must be considered. The first is the \textit{numerical stability} of the sum in \eqref{bromwich_quad_example}. Even in the unrealistic situation of computing each $R(z_j,A)$ with zero error, if $t\max(\mathrm{Re}(z_j))$ is unbounded as $N\rightarrow\infty$, then the exponential terms in \eqref{bromwich_quad_example} increase and render the sum unstable. This instability is demonstrated in the beautiful error plots of \cite{weideman2007parabolic}, which considers optimal choices of parameters for parabolic and hyperbolic contours (see also \cref{fig:new_quad_stable}).\footnote{A mechanism for providing stability for Talbot contours and for operators whose spectrum lies on the negative real axis ($\delta=0$, $0\notin\mathrm{Sp}(A)$) is given in \cite{dingfelder2015improved}. See also \cite{weideman2010improved}. Regarding hyperbolic contours, \cite{lopez2004numerical} shows weak instability for $h=\log(N)/N$ and contour parameters $\mu,\alpha$ (see \eqref{contour_choice}) independent of $N$. This leads to $\mathcal{O}(\eta\log(\log(N))+\exp({-cN/\log(N)}))$ convergence, where $\eta$ is the error in computing the resolvent. As noted \cite{lopez2006spectral}, for $N$-dependent $\mu$ and $\alpha$, stability plays a key role. A choice of $(\eta,N)$-dependent parameters is proposed in \cite{lopez2006spectral} that leads to $\mathcal{O}(\eta+\exp(-cN/\log(N)))$ convergence.} The second factor is the \textit{numerical cost of computing $R(z,A)$}. This point is largely neglected in the literature, which focuses on finite-dimensional systems (such as truncations of differential operators). Here we are referring to the cost of approximating the full infinite-dimensional $R(z,A)$, as opposed to the cost of computing $(\tilde A-zI)^{-1}$ for a truncation/discretization $\tilde A$ of $A$. For infinite-dimensional systems, the cost typically increases as $z$ approaches the spectrum of $A$ (some reasons for this are given in \cite{colbrook2020computing} in the setting of computing spectral measures). For example, in \cite{dingfelder2015improved}, the contour passes through a point on the positive real axis whose real part decreases like $\mathcal{O}(N^{-2}t^{-1})$, which leads to an increased truncation size needed to accurately compute the resolvent if $0\in\mathrm{Sp}(A)$. We therefore seek a quadrature rule that simultaneously
\begin{itemize}
	\item[(a)] avoids the growth of $t\max(\mathrm{Re}(z_j))$ as $N\rightarrow\infty$, and
	\item[(b)] whose distance to the spectrum of $A$ does not shrink as $N\rightarrow\infty$ for a fixed $t$.
\end{itemize}
\noindent{}Both points are demonstrated via numerical examples below.

We consider a hyperbolic\footnote{We do not consider parabolic contours for two reasons. First, we wish to include the case that $\delta>0$, which is impossible for parabolic contours. Second, if $\max(t\mathrm{Re}(z_j))$ is bounded above as $N\rightarrow\infty$, one can show that the optimal quadrature error when using parabolic contours is $\mathcal{O}(\exp(-c N^{2/3}))$, which is worse than the bounds we derive below.} contour parametrized as in \cite{weideman2007parabolic}:
\begin{equation}\setlength\abovedisplayskip{4pt}\setlength\belowdisplayskip{4pt}
\label{contour_choice}
\gamma(x)=\mu(1+\sin(ix-\alpha)),\quad \mu>0,\quad 0<\alpha<\frac{\pi}{2}-\delta.
\end{equation}
Since it is beneficial to reuse the computed resolvents at different times in \eqref{bromwich_quad_example}, we consider computing $\exp(tA)$ for $t\in[t_0,t_1]$ where $0<t_0\leq t_1$. Using the arguments in \cite{weideman2007parabolic}, there are three error terms associated with the choice \eqref{contour_choice}:
$$\setlength\abovedisplayskip{6pt}\setlength\belowdisplayskip{6pt}
E_1=\mathcal{O}\left(e^{-2\pi\left(\frac{\pi}{2}-\alpha-\delta\right)/h}\right),\quad
E_2=\mathcal{O}\left(e^{\mu t_1-2\pi\frac{\alpha}{h}}\right),\quad
E_3=\mathcal{O}\left(e^{\mu t_0\left(1-\sin(\alpha)\cosh(hN)\right)}\right).
$$
The first two terms correspond to the discretization error of the integral, whereas the third corresponds to the truncation error when using a finite value of $N$. We seek to \textit{asymptotically} optimize the parameters $h,\alpha$ and $\mu$, under the assumption that $\gamma(0)t_1=\mu t_1(1-\sin(\alpha))\leq\beta$ for some $\beta>0$, as $N\rightarrow\infty$. The extra free parameter $\beta$ controls the maximum size of the exponential terms in the sum \eqref{bromwich_quad_example} and is introduced to ensure stability (point (a) above).

Equating the exponential factors in $E_1$ and $E_2$ yields the equation
\begin{equation}
\label{alpha_choice}
\alpha=({h\mu t_1+\pi^2-2\pi\delta})/({4\pi}).
\end{equation}
Since $h\rightarrow 0$ as $N\rightarrow\infty$, $\alpha\rightarrow(\pi-2\delta)/4$. We therefore choose
\begin{equation}
\label{mu_choice}
\mu=(1-\sin(({\pi-2\delta})/{4}))^{-1}\beta/t_1.
\end{equation}
We then equate the exponentials in $E_2$ and $E_3$ and set $\Lambda_t=t_1/t_0$, yielding
$$\setlength\abovedisplayskip{5pt}\setlength\belowdisplayskip{5pt}
\mu(1-\Lambda_t) h+{2\pi\alpha}/{t_0}-\mu h \sin(\alpha)\cosh(hN)=0.
$$
Considering the limit $h\rightarrow 0$, we see that $hN\rightarrow \infty$ as $N\rightarrow\infty$. With the above choices, it follows that the optimal $h$ satisfies
$$\setlength\abovedisplayskip{5pt}\setlength\belowdisplayskip{5pt}
he^{hN}=\sin((\pi-2\delta)/{4})^{-1}{\pi\left({\pi-2\delta}\right)}/(t_0\mu)+\mathcal{O}(h).
$$
Using $W$ to denote the principal branch of the Lambert $W$ function, we therefore set
\begin{equation}
\label{h_choice}
h=\frac{1}{N}W\!\left(N\frac{\pi(\pi-2\delta)}{t_0\mu \sin\left(\frac{\pi-2\delta}{4}\right)}\right)\!=\frac{1}{N}W\!\left(\!\Lambda_t N\frac{\pi(\pi-2\delta)}{\beta \sin\left(\frac{\pi-2\delta}{4}\right)}\!\left(\!1-\sin\left(\frac{\pi-2\delta}{4}\right)\!\right)\!\right).
\end{equation}

\cref{alg:spec_meas} summarizes this procedure and the following theorem gives an error bound for the computed exponential. The two terms in \eqref{analytic_bound} correspond to the error in computing resolvents and the quadrature error, respectively. The error bound $\eta$ in the resolvent can be realized using the adaptive methods in \cref{disc_thm_sec} and \cref{PDE_section}, and bounds on the resolvent in a suitable sector $S_{\delta-\nu}$ \cite[Ch. 2.4]{engel1999one}. Bounds can often be found in practice, for example, by bounding the numerical range as is done via \eqref{Nrange_resbound} in \cref{num_exams_sec}. The quadrature error, which contains the constant $C$, can be made explicit by chasing the constants in \cite{weideman2007parabolic} (that appear in the $E_1,E_2$ and $E_3$ terms) and by controlling the analytic properties of the resolvent. For brevity, we have not given the explicit result. Finally, we remind the reader that \cref{an_quad_theorem} does not apply to general $C_0$-semigroups, for which one cannot deform the contour as in \cref{fig:cont_examples} and must instead apply the regularization trick of \cref{disc_thm1}.

\begin{algorithm}[t]
\textbf{Input:} $A$ (a generator of an analytic semigroup with $S_{\delta}\subset\rho(A)$ for $\delta\in[0,\pi/2)$), $u_0\in\mathcal{H}$, $0<t_0\leq t_1<\infty$, $\beta>0$, $N\in\mathbb{N}$ and $\eta>0$. \\
\vspace{-4mm}
\begin{algorithmic}[1]
\STATE Let $\gamma$ be defined as in \eqref{contour_choice} with $\alpha, \mu$ and $h$ given by \eqref{alpha_choice}, \eqref{mu_choice} and \eqref{h_choice} respectively, where $\Lambda_t=t_1/t_0$.
\STATE Set $z_j=\gamma(jh)$ and $w_j=\frac{h}{2\pi i}\gamma'(jh)$.
\STATE Solve $(A-z_jI)R_j=-u_0$ for $-N\leq j\leq N$ to an accuracy $\eta$.
\end{algorithmic} \textbf{Output:} $\tilde{u}_N(t)=\sum_{j=-N}^Ne^{z_jt}w_jR_j$ for $t\in[t_0,t_1]$.
\caption{Stable and rapidly convergent algorithm for analytic semigroups.}\label{alg:spec_meas}
\end{algorithm}

\begin{theorem}\label{an_quad_theorem}
Suppose that $A$ generates an analytic semigroup and $S_{\delta}\subset \rho(A)$ for $\delta\in[0,\pi/2)$. Let $u_0\in\mathcal{H}$, $0<t_0\leq t_1<\infty$, $\beta>0$ and $\eta>0$. Let $\tilde{u}_N(t)$ denote the output of \cref{alg:spec_meas} for $N\in\mathbb{N}$ and $a=\sin(\pi/4-\delta/2)^{-1}-1$. Then there exists a constant $C$ such that the following bound holds for any $t_0\leq t\leq t_1$ as $N\rightarrow\infty$,
\begin{equation}
\label{analytic_bound}
\begin{split}
\left\| \exp(tA)u_0-\tilde{u}_N(t)\right\|\!&\leq \! \underbrace{\left({2\mu e^{\frac{\beta}{1-\sin(\alpha)}}}{\pi^{-1}}   \!\!\int_0^\infty \!\!\!\! e^{x-\mu t\sin(\alpha)\cosh(x)}dx\right) \eta}_{\text{numerical error due to inexact resolvent}} \\
&\quad\quad\quad\quad+\underbrace{C e^{\frac{\beta}{1-\sin(\alpha)}}\cdot\exp\left(\!-\frac{N\pi(\pi-2\delta)/2}{\log(\Lambda_t  \frac{a}{\beta}N\pi(\pi-2\delta))}\right)}_{\text{quadrature error}}.
\end{split}
\end{equation}
\end{theorem}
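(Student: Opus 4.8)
The plan is to bound $\|\exp(tA)u_0-\tilde u_N(t)\|$ by the triangle inequality against the exact trapezoidal sum on the contour \eqref{contour_choice}, splitting the total error into the two pieces displayed in \eqref{analytic_bound}. First I would record the exact contour representation: since $A$ generates an analytic semigroup and $S_\delta\subset\rho(A)$, the Bromwich integral \eqref{bromwich_int} may be deformed onto $\gamma$, giving $\exp(tA)u_0=\frac{-1}{2\pi i}\int_{-\infty}^\infty e^{\gamma(x)t}R(\gamma(x),A)u_0\,\gamma'(x)\,dx$, with the sectorial resolvent bound $\|R(z,A)\|=O(|z|^{-1})$ on $S_\delta$ guaranteeing absolute convergence. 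Writing $\tilde u_N^{\mathrm{ex}}(t)$ for the same quadrature sum \eqref{bromwich_quad_example} but with the exact resolvents $R(z_j,A)u_0$, I then estimate $\|\tilde u_N(t)-\tilde u_N^{\mathrm{ex}}(t)\|$ (the resolvent error) and $\|\exp(tA)u_0-\tilde u_N^{\mathrm{ex}}(t)\|$ (the quadrature error) separately.

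For the resolvent term, the solves in Step 3 of \cref{alg:spec_meas} return $R_j$ with $\|R_j+R(z_j,A)u_0\|\le\eta$, so this error is at most $\eta\sum_{j=-N}^N|e^{z_jt}|\,|w_j|$. Using $z_j=\gamma(jh)$ I would compute $\mathrm{Re}\,\gamma(x)=\mu(1-\sin(\alpha)\cosh x)$ and $|\gamma'(x)|\le\mu\cosh x$, so that the summand equals $\frac{h}{2\pi}\mu\cosh(jh)\,e^{\mu t(1-\sin(\alpha)\cosh(jh))}$. Bounding this even, unimodal sum by the corresponding integral (the factor $2$ absorbing the Riemann-sum slack) and applying $\cosh x\le e^{|x|}$ together with $e^{\mu t}\le e^{\mu t_1}\le e^{\beta/(1-\sin\alpha)}$ (the last inequality because $\alpha>(\pi-2\delta)/4$ makes $\mu t_1=\beta/(1-\sin((\pi-2\delta)/4))\le\beta/(1-\sin\alpha)$ by \eqref{mu_choice}) yields exactly the first bracket of \eqref{analytic_bound}. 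The key structural point is that the constraint $\gamma(0)t_1\le\beta$ baked into \eqref{mu_choice} keeps this prefactor bounded uniformly in $N$, which is the stability claim (a).

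For the quadrature term I would invoke the strip-analyticity analysis of the trapezoidal rule on such contours from \cite{weideman2007parabolic}, splitting the error into the three contributions $E_1,E_2,E_3$ recorded before \eqref{alpha_choice}: the discretization errors $E_1=O(e^{-2\pi(\pi/2-\alpha-\delta)/h})$ and $E_2=O(e^{\mu t_1-2\pi\alpha/h})$ come from the widths of the strip of analyticity on the two sides of $\gamma$ (controlled by $S_\delta\subset\rho(A)$ and the sectorial bound), and the truncation error $E_3=O(e^{\mu t_0(1-\sin(\alpha)\cosh(hN))})$ from cutting the sum at $|j|\le N$. The parameter choices are precisely those that balance these: \eqref{alpha_choice} equates the exponents of $E_1$ and $E_2$, forcing $\alpha\to(\pi-2\delta)/4$; \eqref{mu_choice} fixes $\mu$; and \eqref{h_choice} equates $E_2$ with $E_3$, producing the Lambert-$W$ expression for $h$. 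Substituting, the dominant exponent is $-2\pi\alpha/h$, and the asymptotic $W(y)\sim\log y$ gives $2\pi\alpha/h\sim\frac{N\pi(\pi-2\delta)/2}{\log(\Lambda_t(a/\beta)N\pi(\pi-2\delta))}$ once one identifies $a=\sin(\pi/4-\delta/2)^{-1}-1$ with the constant inside the $W$-argument of \eqref{h_choice}; the $e^{\mu t_1}$ factor in $E_2$ is absorbed into the $e^{\beta/(1-\sin\alpha)}$ prefactor and $C$ collects the remaining algebraic constants.

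The main obstacle is making the asymptotic balancing rigorous and \emph{uniform} over the whole interval $t\in[t_0,t_1]$: the three error terms carry different prefactors and depend on $t$ through both $t_0$ (in $E_3$) and $t_1$ (in $E_2$), so I must verify that the chosen $h,\alpha,\mu$ simultaneously control all three for every such $t$, with $\Lambda_t=t_1/t_0$ entering exactly through this mismatch. Subsidiary care is needed to confirm that the shifted contours $\gamma(\cdot+iy)$ stay inside $S_\delta$ for $|y|$ up to the claimed strip half-widths (so the constants in $E_1,E_2$ are finite), and to justify replacing $W$ by $\log$ in the denominator while retaining an honest $O$-bound as $N\to\infty$. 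By contrast, the resolvent-error estimate is essentially a direct computation once the sum-to-integral comparison is set up.
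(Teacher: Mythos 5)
Your proposal is correct and follows essentially the same route as the paper: a triangle-inequality split into resolvent error and quadrature error, with the resolvent term bounded via $|e^{\gamma(x)t}\gamma'(x)|\leq\mu e^{|x|+\mu t-\mu t\sin(\alpha)\cosh(x)}$ and a sum-to-integral comparison on the unimodal profile, and the quadrature term obtained by balancing the $E_1,E_2,E_3$ exponents of \cite{weideman2007parabolic} through \eqref{alpha_choice}, \eqref{mu_choice}, \eqref{h_choice} and then replacing the Lambert-$W$ denominator by a logarithm (valid since $W(y)\leq\log y$ for large $y$). The only difference is one of detail: where you absorb the Riemann-sum slack into a factor of $2$, the paper splits at the turning point $\sinh^{-1}(c^{-1})$ and uses monotonicity plus concavity there, but this is the same idea and yields the same constant.
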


\begin{proof}
Let $t_0\leq t\leq t_1$. The analysis preceding the theorem shows that
$$
\max\{E_1,E_2\}=\mathcal{O}(\exp((1-\sin(\pi/4-\delta/2))^{-1}\beta-2\pi{\alpha}/{h}).
$$
Using $\sin(\alpha)>\sin(\pi/4-\delta/2)$ for small $h$ and the definition of $a$, we see that
\begin{equation}
\label{E1E2bound}
\max\{E_1,E_2\}=\mathcal{O}\left(\exp\! \left(\! \frac{\beta}{1-\sin(\alpha)}\! -\! \frac{N\pi(\pi-2\delta)/2}{W\!(\Lambda_t N \frac{a}{\beta}\pi(\pi-2\delta))}\! \right)\right).
\end{equation}
Using $\cosh(x)\geq e^x/2$, and $W(z)e^{W(z)}=z$, we have that
$$
\mu t_0\sin(\alpha)\cosh(hN)\geq \frac{N\pi(\pi-2\delta)/2}{W\!(\Lambda_t N \frac{a}{\beta}\pi(\pi-2\delta))}\frac{\sin(\alpha)}{\sin(\pi/4-\delta/2)}\geq \frac{N\pi(\pi-2\delta)/2}{W\!(\Lambda_t N \frac{a}{\beta}\pi(\pi-2\delta))}.
$$
Since $t_0\leq t_1$, it follows that $E_3$ satisfies the same bound as $E_1$ and $E_2$ in \eqref{E1E2bound}. Using $\|\cdot\|$ to denote the operator norm, it follows that
\begin{equation*}\setlength\abovedisplayskip{5pt}\setlength\belowdisplayskip{5pt}
\label{an_bound1}
\Big\| \exp(tA) + \frac{h}{2\pi i}\sum_{j=-N}^Ne^{z_jt}\gamma'(jh)R(z_j,A)\Big\|\! \leq\!  C_1\!\exp\! \left(\!\! \frac{\beta}{1-\sin(\alpha)}\! -\! \frac{N\pi(\pi-2\delta)/2}{W\!(\Lambda_t N \frac{a}{\beta}\pi(\pi-2\delta))}\!\! \right)\! ,
\end{equation*}
for a constant $C_1$. Since $\|[R_j\!+\! R(z_j\! ,A)]u_0\|\!\leq\! \eta$, the left-hand side of \eqref{analytic_bound} is bounded by
\begin{equation}\setlength\abovedisplayskip{4pt}\setlength\belowdisplayskip{5pt}
\label{an_bound2}
\frac{\eta}{2\pi}\!\!\sum_{j=-N}^Nh|e^{z_jt}\gamma'(jh)|+C_1e^{\frac{\beta}{1-\sin(\alpha)}}\exp\!\left(\!-\frac{N\pi(\pi-2\delta)/2}{W\!(\Lambda_t N \frac{a}{\beta}\pi(\pi-2\delta))}\!\right).
\end{equation}
A simple calculation shows that $|\exp(\gamma(x)t)\gamma'(x)|\leq \mu \exp(|x|+\mu t-\mu t \sin(\alpha)\cosh(x))$. For notational convenience, set $c=\mu t\sin(\alpha)$. Using this bound in \eqref{an_bound2}, the resulting sum for non-negative $j$ is a Riemann sum for the integral in \eqref{analytic_bound} (up to constants). The region of integration can be split into two regions $\{0\leq x <\sinh^{-1}(c^{-1})\}$ and $\{x> \sinh^{-1}(c^{-1})\}$, on which the integrand has positive and negative derivatives respectively. Let $Jh\leq \sinh^{-1}(c^{-1})\leq (J+1)h$ for $J\in \mathbb{N}$. In a neighborhood of the turning point $\sinh^{-1}(c^{-1})$, the integrand is concave and hence for small $h$ we can also bound the sum from $j=J$ to $j=J+1$ by the integral over $[(J-1)h,(J+2)h]$. It follows that
$$\setlength\abovedisplayskip{5pt}\setlength\belowdisplayskip{5pt}
\sum_{j=-N}^Nh|e^{z_jt}\gamma'(jh)|\leq {4\mu e^{\frac{\beta}{1-\sin(\alpha)}}}\int_{0}^\infty e^{x-c\cosh(x)}dx,\quad \text{for large $N$.}
$$
Bounding the second term of \eqref{an_bound2} using $W\!(x)\geq \log(x)$ for large $x$ yields \eqref{analytic_bound}.
\end{proof}

Smaller $\beta$ leads to a smaller error plateau for a given $\eta$ as $N\rightarrow\infty$, but a slower rate of convergence (with the rate depending only logarithmically on $\beta$). In practice, we found that results were not strongly dependent on reasonable choices of $\beta$ and have used a default value of $\beta=3$ in what follows.

\begin{figure}\vspace{-1mm}
  \centering
  \begin{minipage}[b]{0.48\textwidth}
    \begin{overpic}[width=\textwidth]{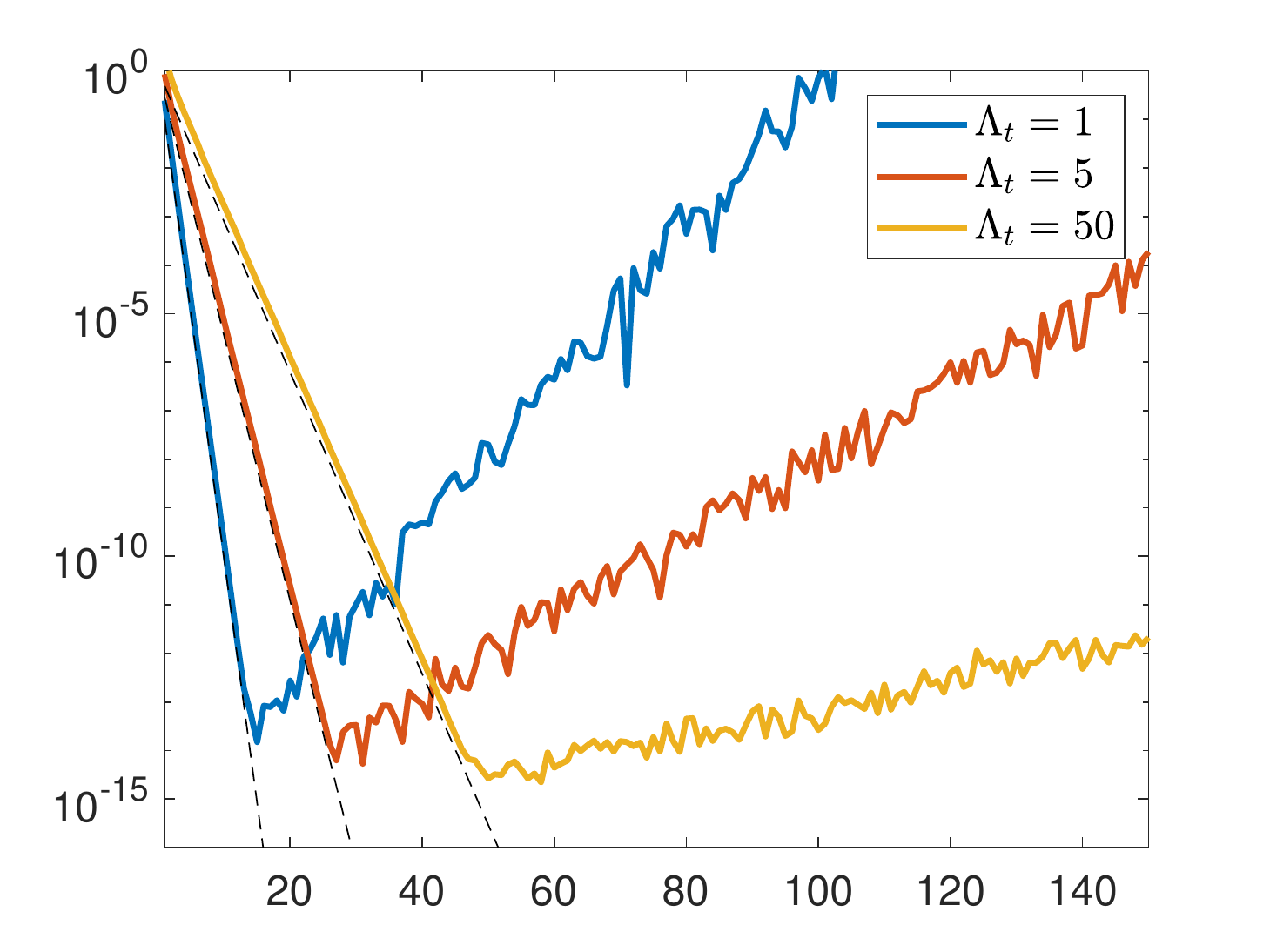}
     \put (18,73) {parameter choices from \cite{weideman2007parabolic}}
		\put(50,-2) {$N$}
		\put(2,37) {\rotatebox{90}{$M_N$}}
     \end{overpic}
  \end{minipage}
  \hfill
  \begin{minipage}[b]{0.48\textwidth}
    \begin{overpic}[width=\textwidth]{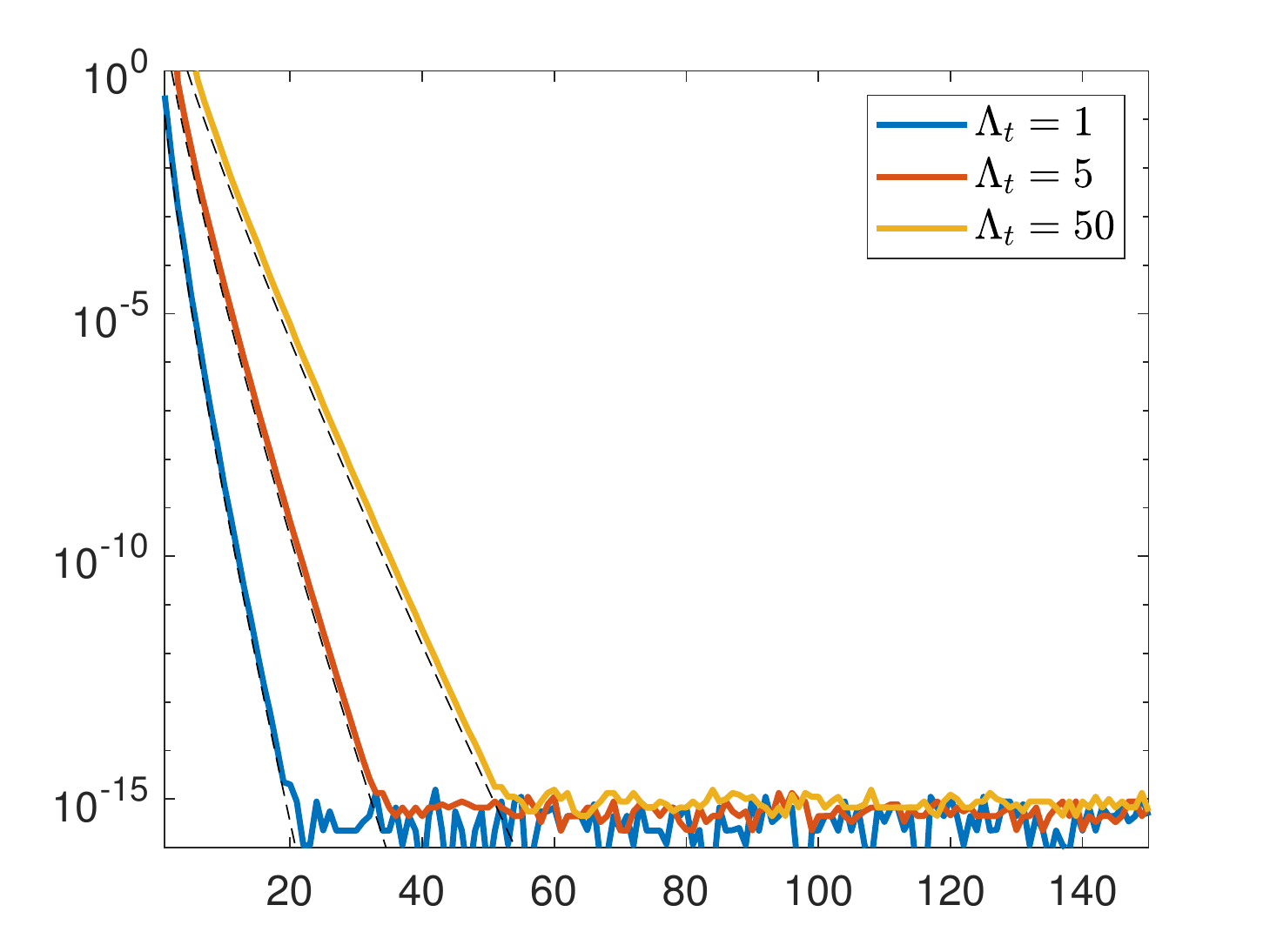}
    \put (21,73) {proposed quadrature rule}
		\put(50,-2) {$N$}
		\put(2,37) {\rotatebox{90}{$M_N$}}
     \end{overpic}
  \end{minipage}
  \vspace{-2mm}\caption{Left: Maximum error $M_N$ (defined in \eqref{M_err_def}) for the optimal hyperbolic contour of \cite{weideman2007parabolic}, showing instability for large $N$. Right: Same but for the parameter choices from \cref{an_quad_theorem} and $\beta=3$, showing stability for large $N$. In both cases, the dashed lines represent the theoretical convergence rates.}
\label{fig:new_quad_stable}
\end{figure}

\subsection{Numerical example showing stability} As a simple example to demonstrate the numerical stability of the proposed quadrature rule, we consider
$$\setlength\abovedisplayskip{6pt}\setlength\belowdisplayskip{6pt}
e^{\lambda t}=\frac{1}{2\pi i}\int_{\gamma}\frac{e^{zt}}{z-\lambda}dz,\quad \lambda\leq  0.
$$
To model the situation of singularities at $0$ corresponding to $0\in\mathrm{Sp}(A)$, we take $\lambda =0$ (similar behavior occurs for other choices) and consider the maximum error
\begin{equation}\setlength\abovedisplayskip{6pt}\setlength\belowdisplayskip{6pt}
\label{M_err_def}
M_N=\max_{t\in[t_0,t_1]}\left|1-Q_{N}(t)\right|,
\end{equation}
where $Q_N$ denotes the output of the quadrature rule under consideration. \cref{fig:new_quad_stable} shows the error for $t_0=0.1$ and a range of $\Lambda_t$ values for the quadrature rule in \cite{weideman2007parabolic} (using the optimal values of parameters in Table 1 of \cite{weideman2007parabolic}) and the quadrature rule of \cref{an_quad_theorem}. In both cases, the dashed lines represent the theoretical convergence rate estimates and agree well with the convergence seen in practice. We see the instability of the quadrature rule in \cite{weideman2007parabolic}. Such instabilities can be a problem for \eqref{bromwich_int}, where estimating the optimal value of $N$ and optimal contour parameters can be difficult for an operator $A$ with an unknown spectrum (corresponding to unknown singularities of the resolvent). This can be difficult even in the scalar case when singularities can be analyzed (\cite[Fig. 1]{garrappa2013evaluation} shows an example of loss of accuracy), and typically requires the user to perform substantial case-specific analysis \cite{pang2016fast}. In contrast, the quadrature rule of \cref{an_quad_theorem} is completely stable. An advantage of a stable quadrature rule is that it no longer becomes essential to determine the optimal value of $N$ and optimal contour parameters. Increasing $N$ no longer incurs a stability penalty. Moreover, within the stable region for the quadrature rule in \cite{weideman2007parabolic}, there is very little difference between the convergence rates of each method.

\subsection{Numerical example demonstrating \eqref{analytic_bound}}\label{MT_example1} To demonstrate the bound \eqref{analytic_bound} and the effects of computing the resolvent near the spectrum of an infinite-dimensional operator, we consider the variable diffusion equation on $L^2(\mathbb{R})$
\begin{equation}
\label{variable_diff_example}
u_t=[(1.1-{1}/({1+x^2}))u_x]_x,\quad u_0(x)=e^{-\frac{(x-1)^2}{5}}\cos(2x)+{2}[{1+(x+1)^4}]^{-1}.
\end{equation}
To represent the operator, we use the Malmquist--Takenaka functions, defined by
$$
\phi_n(x)=\sqrt{{L}/{\pi}}{(1+iLx)^n}{(1-iLx)^{-(n+1)}},\quad n\in\mathbb{Z},
$$
where $L>0$ denotes a scaling factor (we take $L=1/5$, which has not been optimized). Following the remarks after \cref{PDO_thm1}, we can still compute the relevant evaluation functions $\Lambda$ for this basis. The orthonormal basis $\{\phi_n\}_{n\in\mathbb{Z}}$ has an interesting history \cite{iserles2020family} and approximation properties can be found in \cite{boyd1987spectral,weideman1994computation}. We rapidly compute expansions in this basis using the fast Fourier transform. The basis also simultaneously provides sparse matrices for differentiation and multiplication. Even for the most computationally expensive examples in this section (those with the largest number of quadrature points and basis functions), the computation (including forming the linear system, solving for all quadrature points etc.) took less than half a second on a modest laptop without parallelization.

\begin{figure}\vspace{-1mm}
  \centering
  \begin{minipage}[b]{0.48\textwidth}
    \begin{overpic}[width=\textwidth]{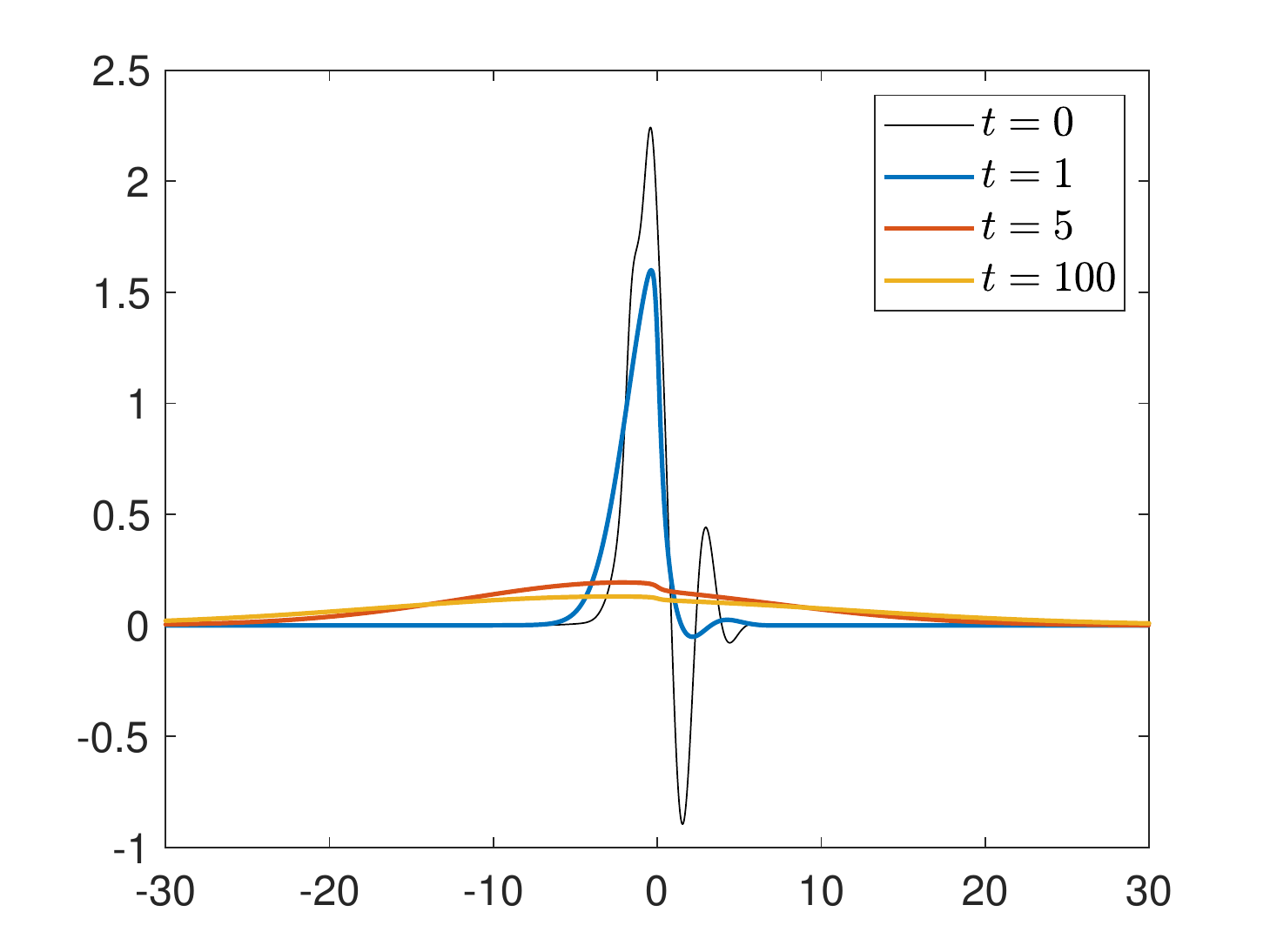}
     \put (17,73) {solutions at different time $t$}
		\put(50,-2) {$x$}
     \end{overpic}
  \end{minipage}
  \hfill
  \begin{minipage}[b]{0.48\textwidth}
    \begin{overpic}[width=\textwidth]{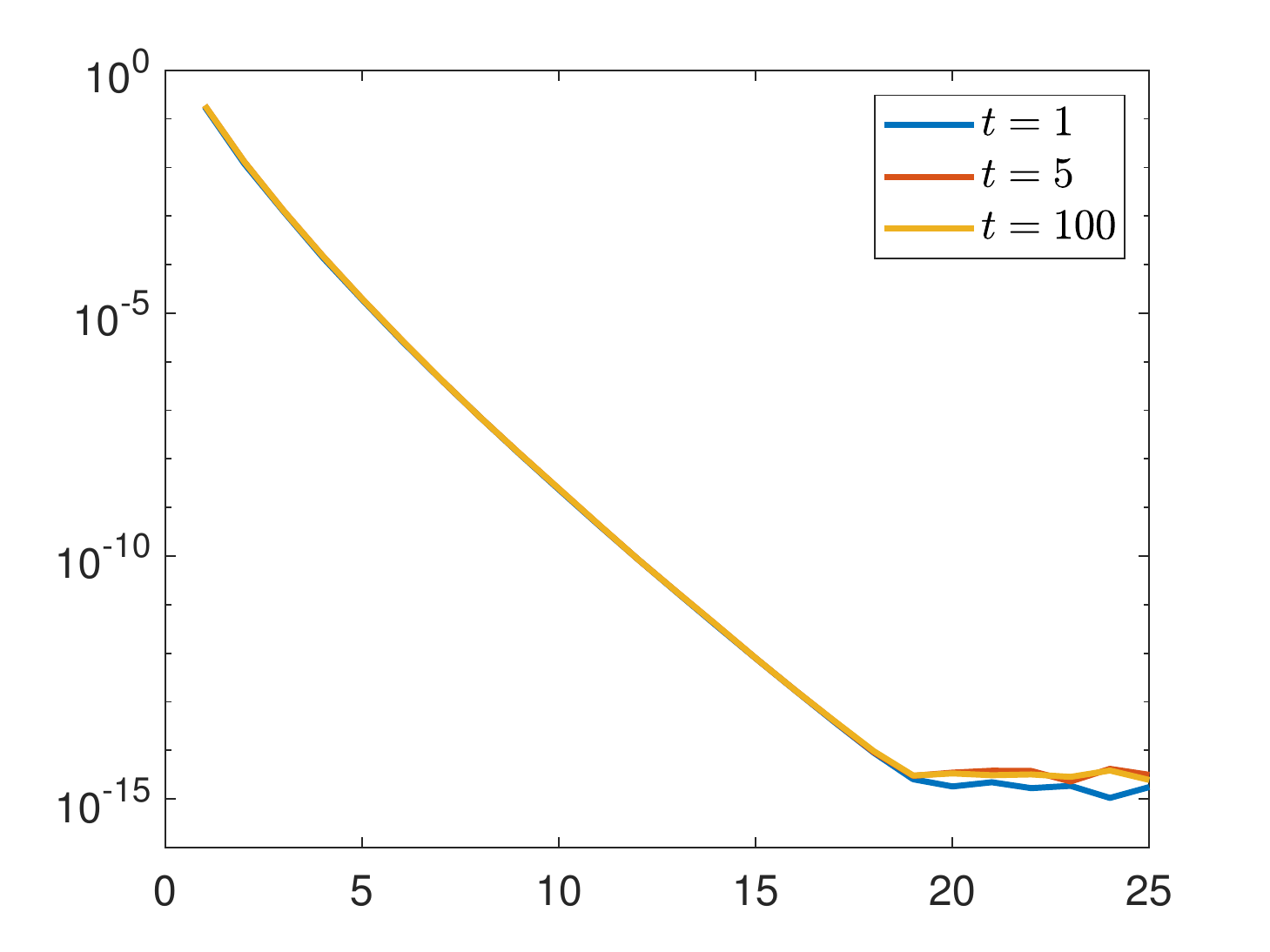}
    \put (28,73) {relative errors of $u$}
		\put(50,-2) {$N$}
     \end{overpic}
  \end{minipage}
  \vspace{-1mm}\caption{Left: Computed solutions of the variable diffusion equation \eqref{variable_diff_example}, with an error bound $\epsilon=10^{-12}$. Right: Relative errors of the computed solutions in terms of $N$, where resolvents $R(z_j,A)$ are computed with an adaptive number of basis functions so that the second term on the right-hand side of \eqref{analytic_bound} dominates the $\eta$ term. Errors are computed in the $L^2(\mathbb{R})$ norm.}\label{fig:new_quad_convergence}
\end{figure}

\cref{fig:new_quad_convergence} (left) shows computed solutions with a rigorous error bound $\epsilon=10^{-12}$. As a first demonstration of \eqref{analytic_bound}, we compute the resolvents so that the second term on the right-hand side of \eqref{analytic_bound} dominates the $\eta$ term. \cref{fig:new_quad_convergence} (right) shows the convergence (where errors are computed by comparison with larger parameter values to approximate the true error as opposed to a computed bound) in the number of quadrature points for $\Lambda_t=1$. The error is almost independent of $t$ as expected from \eqref{analytic_bound} (recall that $\mu t\leq \beta$).

The situation becomes quite different when we consider the first error term on the right-hand side of \eqref{analytic_bound}. \cref{fig:new_quad_convergence2} shows errors in terms of $m$, where we take $\{\phi_n:|n|\leq m\}$ as our basis functions and fixed $N=30$ quadrature points so that the first term on the right-hand side of \eqref{analytic_bound} dominates. The dashed lines in the left panel show relative errors in the computed resolvents $R(\gamma(0),A)$, where $A$ is the generator. As $t$ increases, more basis functions are needed to compute the resolvent to a given accuracy $\eta$ since $\gamma(0)\sim t^{-1}$ decreases to $0\in\mathrm{Sp}(A)$. However, we can still compute accurate solutions for large time using a modest number of basis functions. In this example, the cost per solve scales as $\mathcal{O}(m)$ owing to the banded matrix representation. In general, the bandwidth depends on the regularity of the coefficients of the PDO. For this example and in \cref{complex_diff_example,Elastic_example}, we do not include this when reporting the complexity in terms of $m$. The solid lines in \cref{fig:new_quad_convergence2} (right) show the errors of the full approximation, which decrease at roughly the same rate as $m\rightarrow\infty$.

\begin{figure}\vspace{-1mm}
  \centering
  \begin{minipage}[b]{0.48\textwidth}
    \begin{overpic}[width=\textwidth]{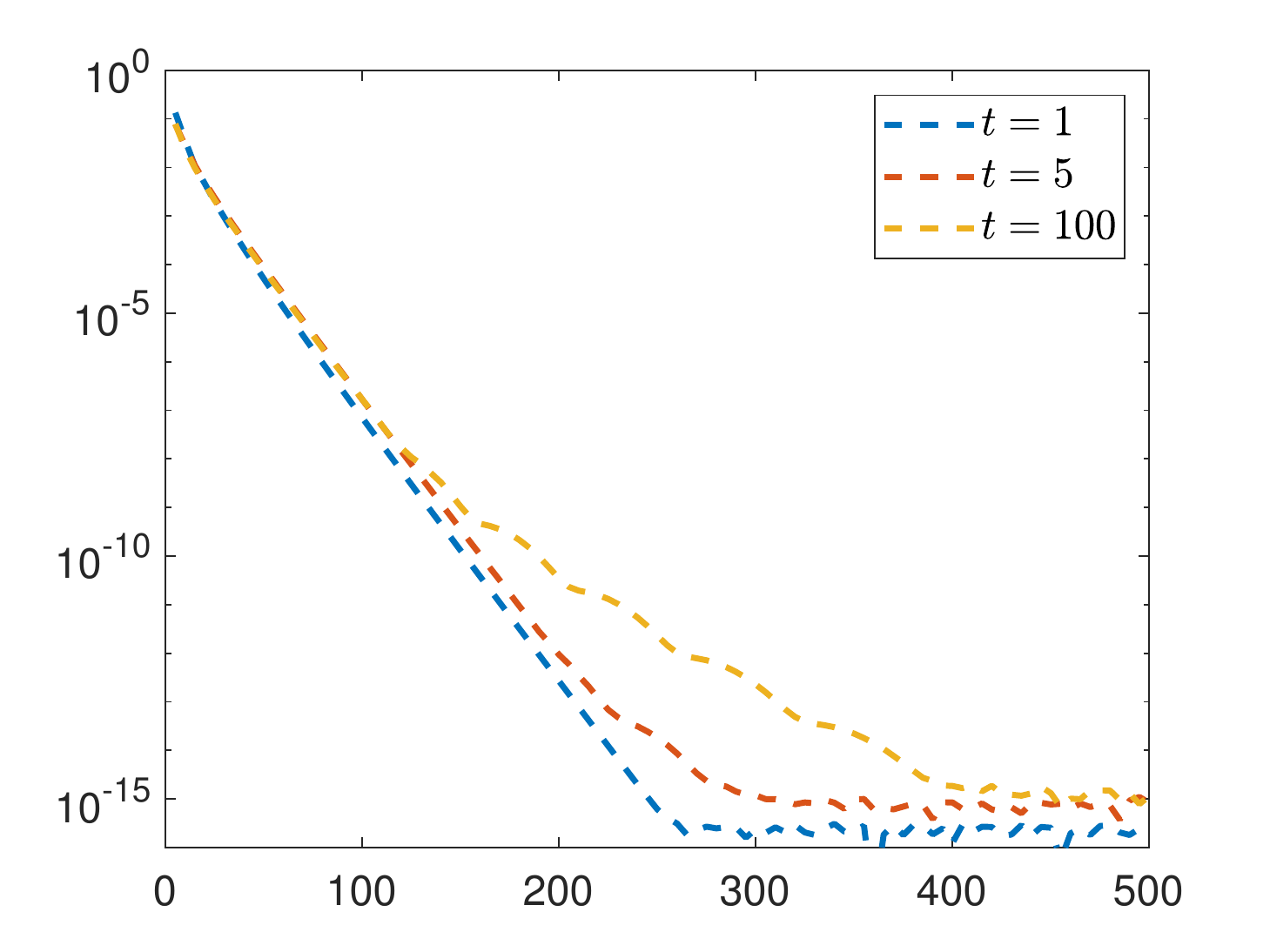}
    \put (18,73) {relative errors of $R(\gamma(0),A)$}
		\put(50,-2) {$m$}
     \end{overpic}
  \end{minipage}
  \hfill
  \begin{minipage}[b]{0.48\textwidth}
    \begin{overpic}[width=\textwidth]{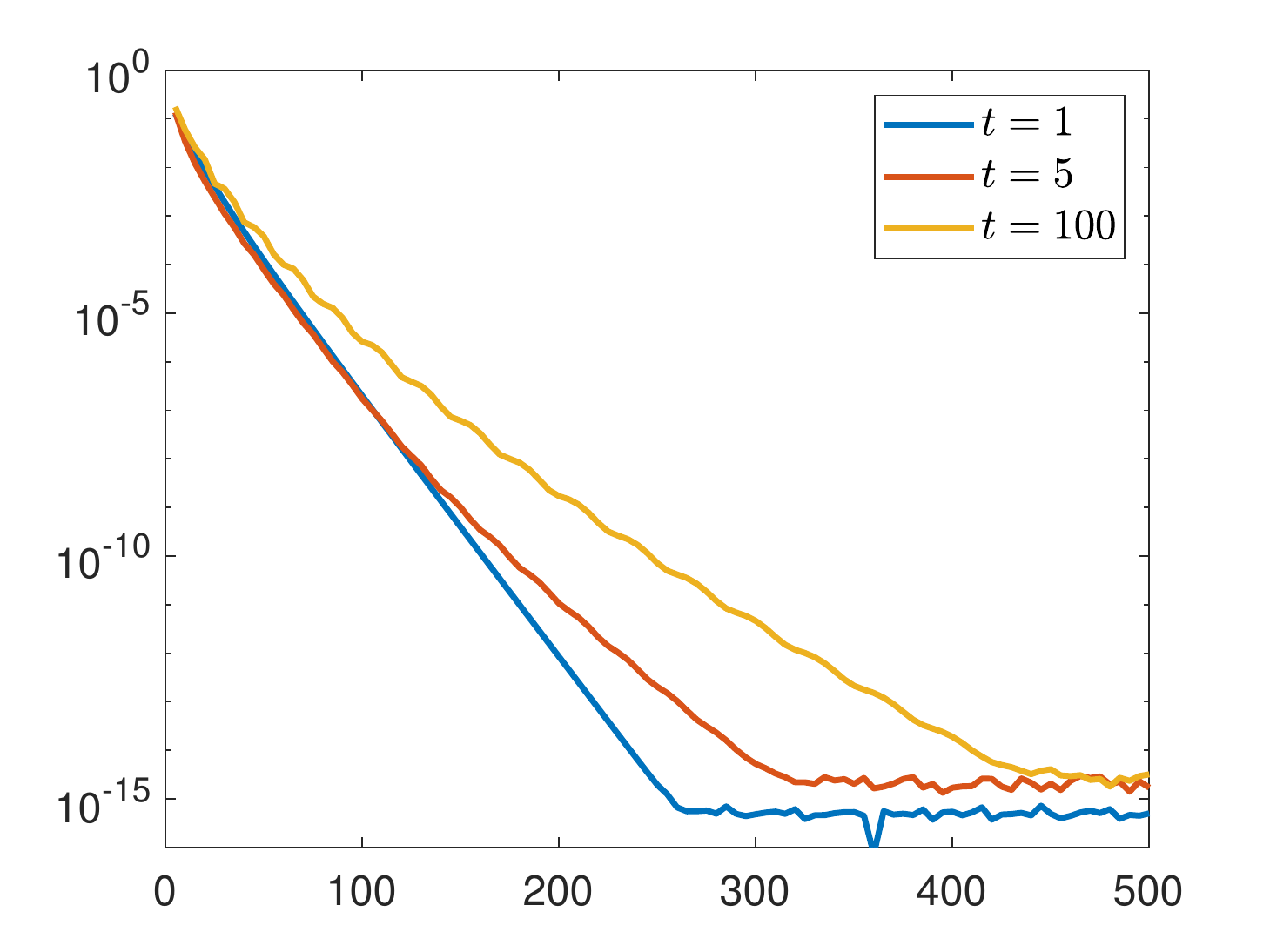}
    \put (28,73) {relative errors of $u$}
		\put(50,-2) {$m$}
     \end{overpic}
  \end{minipage}
  \vspace{-2mm}\caption{Left: Relative errors of the computed resolvents $R(\gamma(0),A)$ in terms of $m$ (the number of basis functions used is $2m+1$). Right: Relative errors of the computed solutions $u$ in terms of $m$. Errors are computed in the $L^2(\mathbb{R})$ norm. The number of quadrature points ($2N+1$ with $N=30$) is chosen such that the first term on the right-hand side of \eqref{analytic_bound} dominates.}\label{fig:new_quad_convergence2}\vspace{-4mm}
\end{figure}

\section{Numerical examples}\label{num_exams_sec}
We now provide three examples to demonstrate the versatility of our approach: an infinite discrete system, a PDE on an unbounded domain, and a second-order (in time) PDE on a bounded domain. To gain error bounds for non-normal generators $A$, we bound the resolvent\footnote{See also \cite{weideman2010improved} for pseudospectra considerations when choosing contours.} and spectrum using
\begin{equation}\setlength\abovedisplayskip{7pt}\setlength\belowdisplayskip{7pt}\label{Nrange_resbound}
\mathrm{Sp}(A)\subset \overline{\mathcal{N}(A)}\cup \overline{\mathcal{N}(A^*)},\quad \|R(z,A)\|\leq [\mathrm{dist}(z,\overline{\mathcal{N}(A)})]^{-1}\hspace{2mm}\forall z\notin \overline{\mathcal{N}(A)}\cup \overline{\mathcal{N}(A^*)},
\end{equation}
where $\mathcal{N}(A):=\{\langle Ax,x\rangle:x\in\mathcal{D}(A),\|x\|=1\}$ denotes the numerical range. Working in infinite dimensions has the advantage that it is often \textit{much easier} to obtain such bounds for $A$ than for a discretization or truncation of $A$. In what follows we use floating-point arithmetic to compute solutions and error bounds, though it is simple to adapt the algorithms using interval arithmetic \cite{tucker2011validated,rump2010verification}. Bounds only need to be verified for residuals of the solved linear systems. For example, this could be useful in the growing area of computer-assisted proofs \cite{fefferman1990,fefferman1996interval,Hales_Annals}.

\subsection{Schr\"odinger and wave equations on the Ammann--Beenker tiling}\label{AB_example}

In this example, we consider semigroups on the aperiodic Ammann--Beenker (AB) tiling, a model of a 2D quasicrystal (aperiodic crystals), shown in \cref{fig:AB_1} (left). Quasicrystals were discovered in 1982 by Shechtman \cite{PhysRevLett.53.1951} who was awarded the Nobel prize in 2011 for his discovery. They have generated considerable interest because of their exotic physical and spectral properties \cite{stadnik2012physical}. Whilst physical transport properties of quasicrystals are well-studied \cite{roche1997electronic,janssen2018aperiodic}, computing the relevant semigroups on the infinite-dimensional space is challenging because of the aperiodicity. For example, one cannot use absorbing boundary conditions. We consider the graph Laplacian
$$\setlength\abovedisplayskip{5pt}\setlength\belowdisplayskip{5pt}
[\Delta_{\mathrm{AB}}\psi]_{i} = \sum_{i \sim j} \left(\psi_j-\psi_i\right),
$$
with summation over nearest neighbor vertices. A natural ordering of the vertices leads to an operator acting on $l^2(\mathbb{N})$ whose local bandwidth grows. We consider the (free) Schr\"odinger equation and the wave equation given by
$$\setlength\abovedisplayskip{5pt}\setlength\belowdisplayskip{5pt}
iu_t=-\Delta_{\mathrm{AB}}u\quad \text{and}\quad u_{tt}=\Delta_{\mathrm{AB}}u,
$$
respectively. As our initial condition, we take $u_0$ to be $1$ on the vertices shown as red dots in \cref{fig:AB_1}, and zero otherwise (for the wave equation, we set $u'(0)=0$).

\begin{figure}\vspace{-1mm}
  \centering
  \begin{minipage}[b]{0.48\textwidth}
    \begin{overpic}[width=\textwidth,trim={25mm 90mm 25mm 90mm},clip]{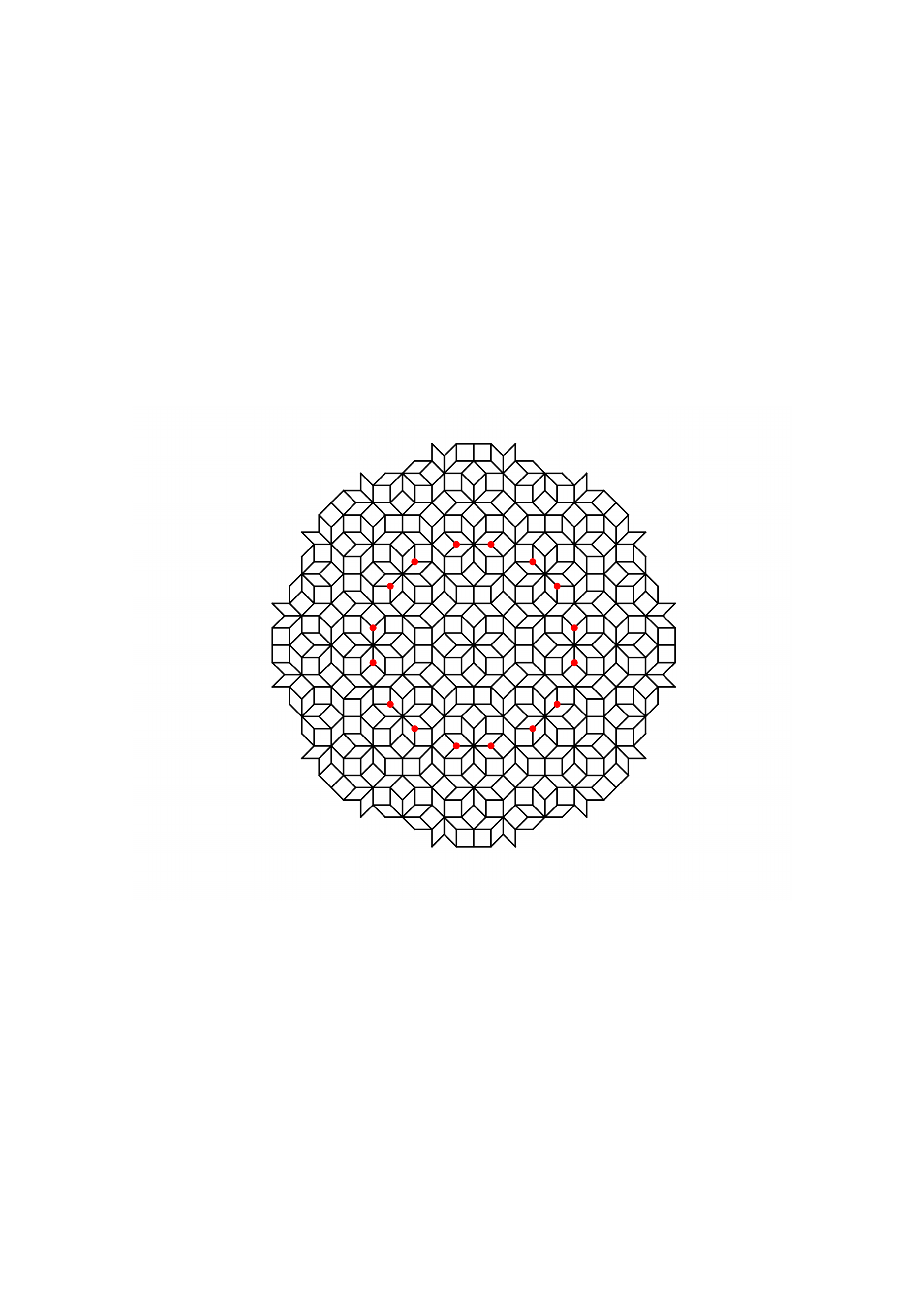}
     \end{overpic}
  \end{minipage}
	\begin{minipage}[b]{0.48\textwidth}
    \begin{overpic}[width=\textwidth]{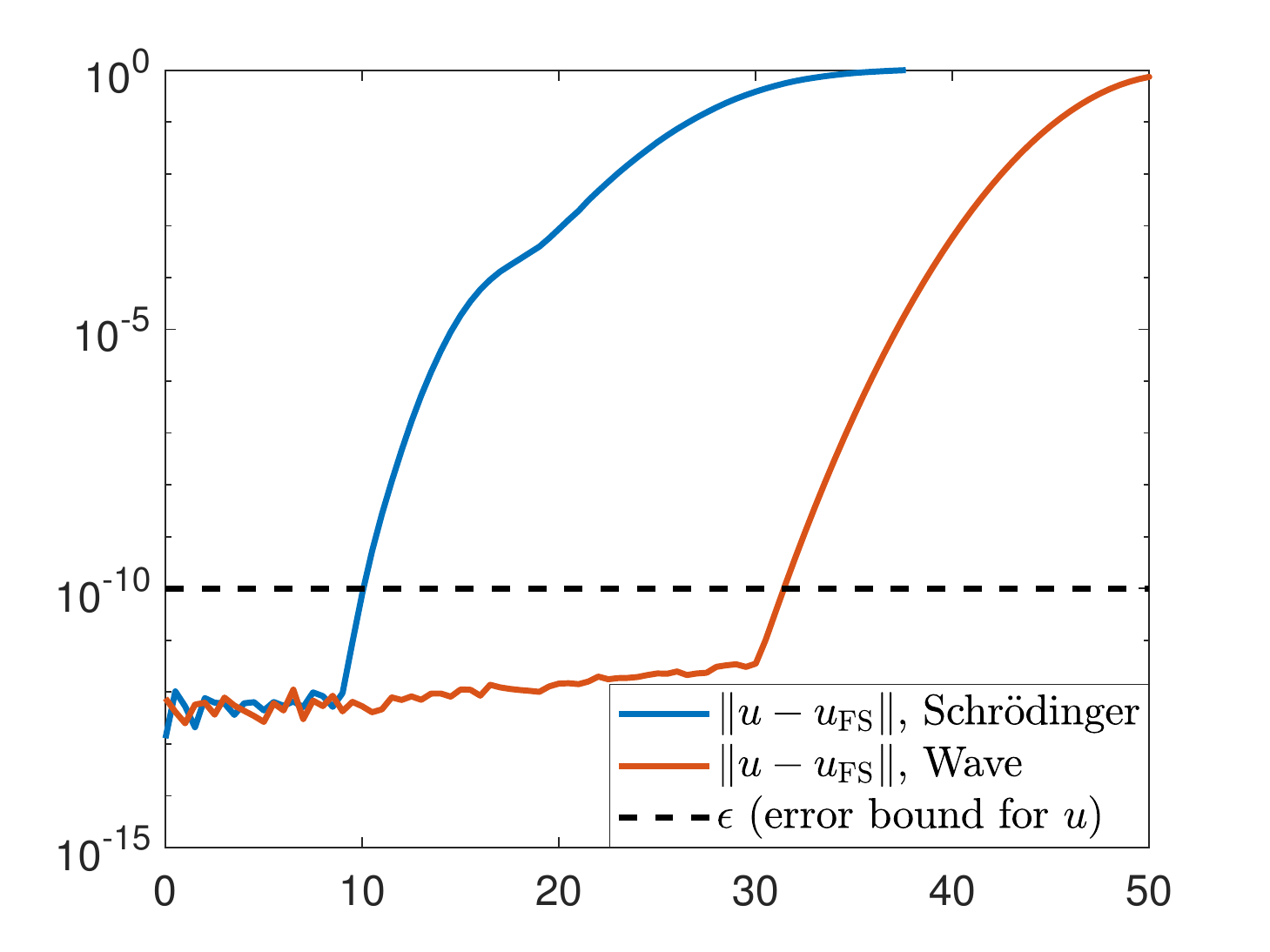}
		\put(50,-1) {$t$}
     \end{overpic}
  \end{minipage}

  \vspace{-2mm}\caption{Left: Finite portion of the infinite aperiodic Ammann--Beenker (AB) tiling, generated from an incommensurate rotation and projection of the 4D hypercubic lattice. The AB tiling is aperiodic with global 8-fold rotational symmetry around a central point, and the lattice is obtained by considering all vertices of the tiling. The red dots correspond to $u_0$ (see main text). Right: Difference in norm between the solution computed using the method of this paper ($u$, which has a guaranteed error bound of $10^{-10}$) and the solution computed using Galerkin projection ($u_{\mathrm{FS}}$). As $t$ increases, the difference begins to grow quickly as $u_{\mathrm{FS}}$ becomes inaccurate because of boundary effects.}\label{fig:AB_1}
\end{figure}

\begin{figure}
  \centering
  \begin{minipage}[b]{0.32\textwidth}
    \begin{overpic}[width=\textwidth]{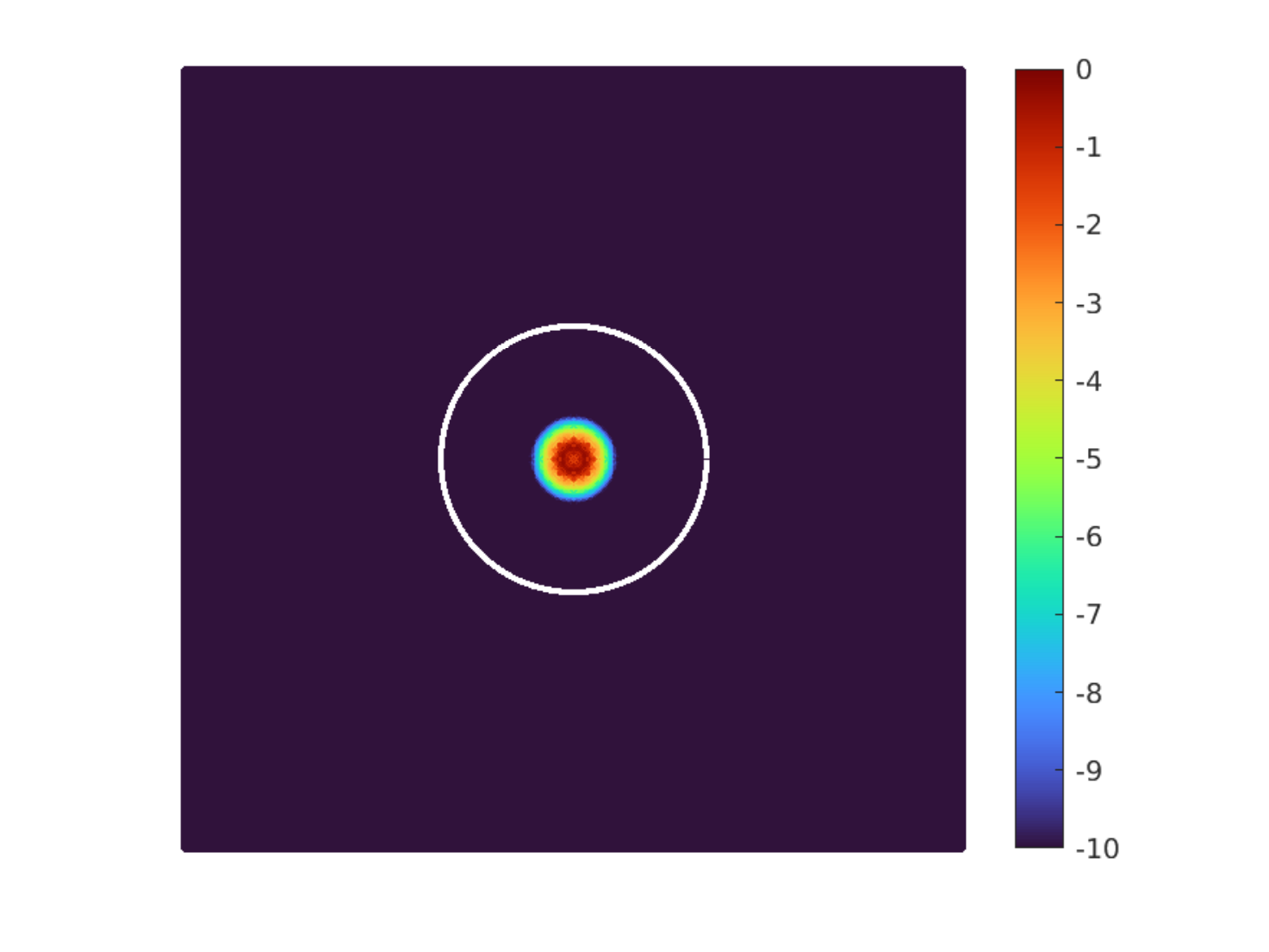}
     \end{overpic}
  \end{minipage}
	\begin{minipage}[b]{0.32\textwidth}
    \begin{overpic}[width=\textwidth]{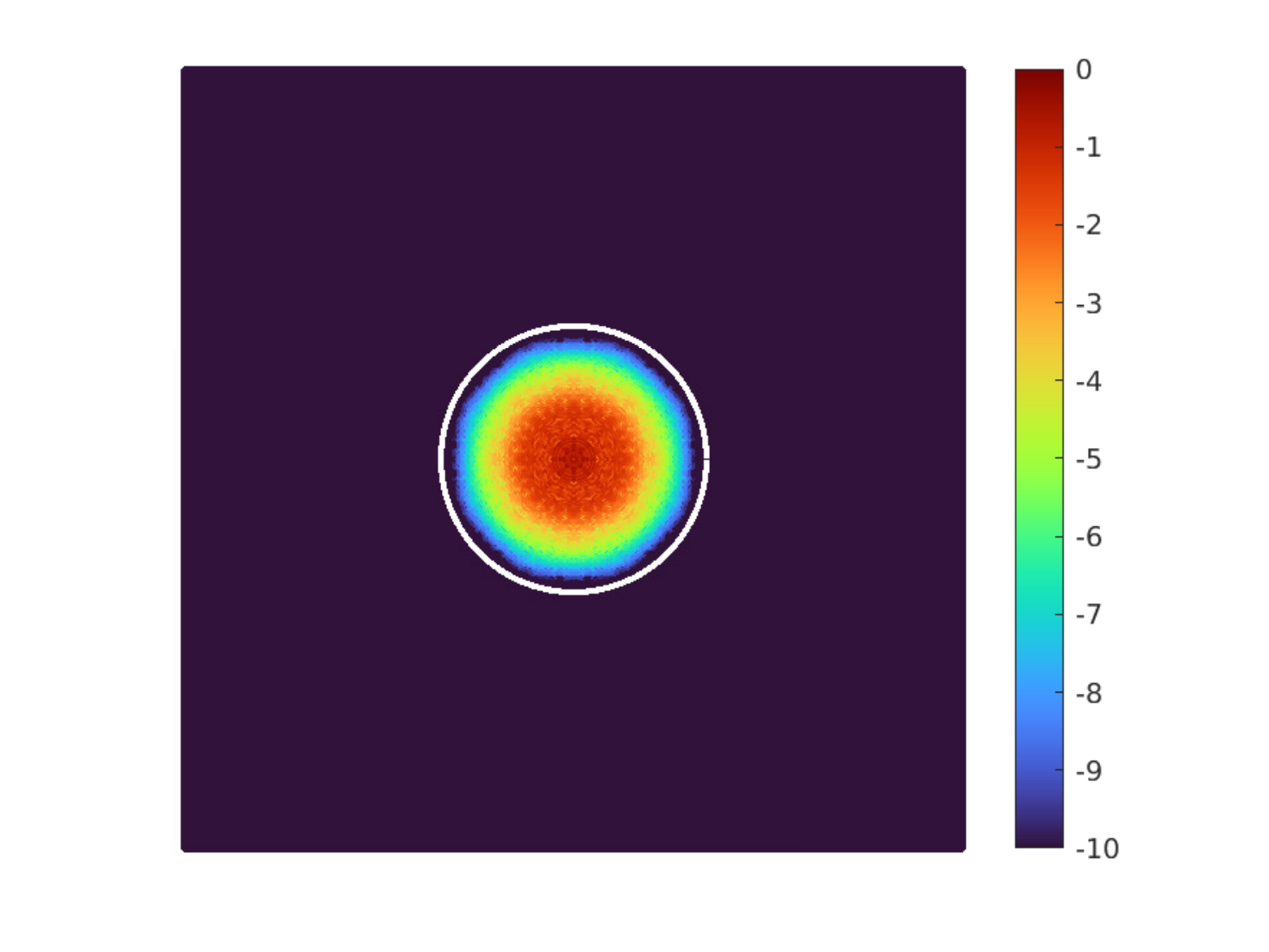}
     \end{overpic}
  \end{minipage}
	\begin{minipage}[b]{0.32\textwidth}
    \begin{overpic}[width=\textwidth]{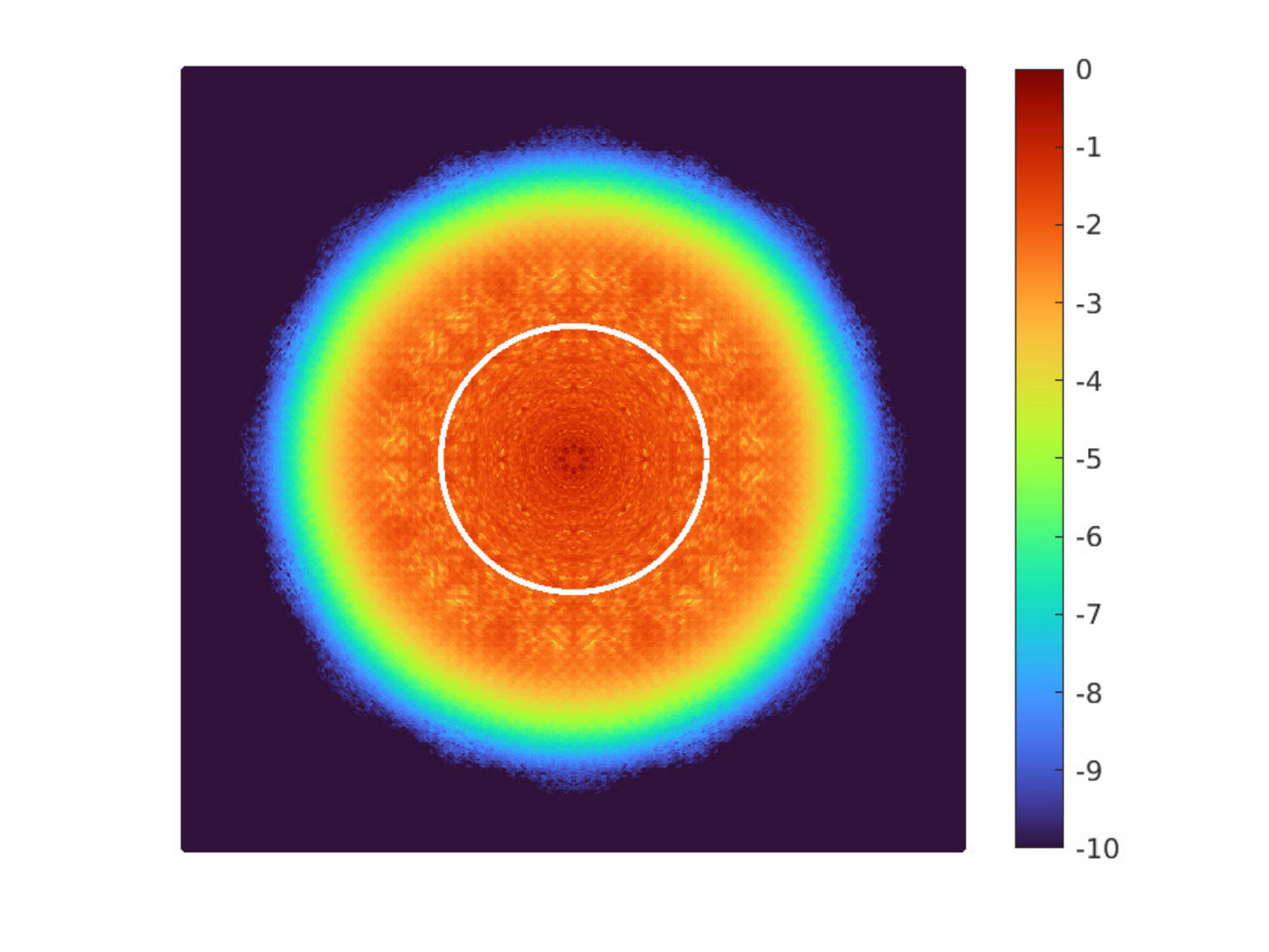}
     \end{overpic}
  \end{minipage}
	
	\begin{minipage}[b]{0.32\textwidth}
    \vspace{-4mm}\begin{overpic}[width=\textwidth]{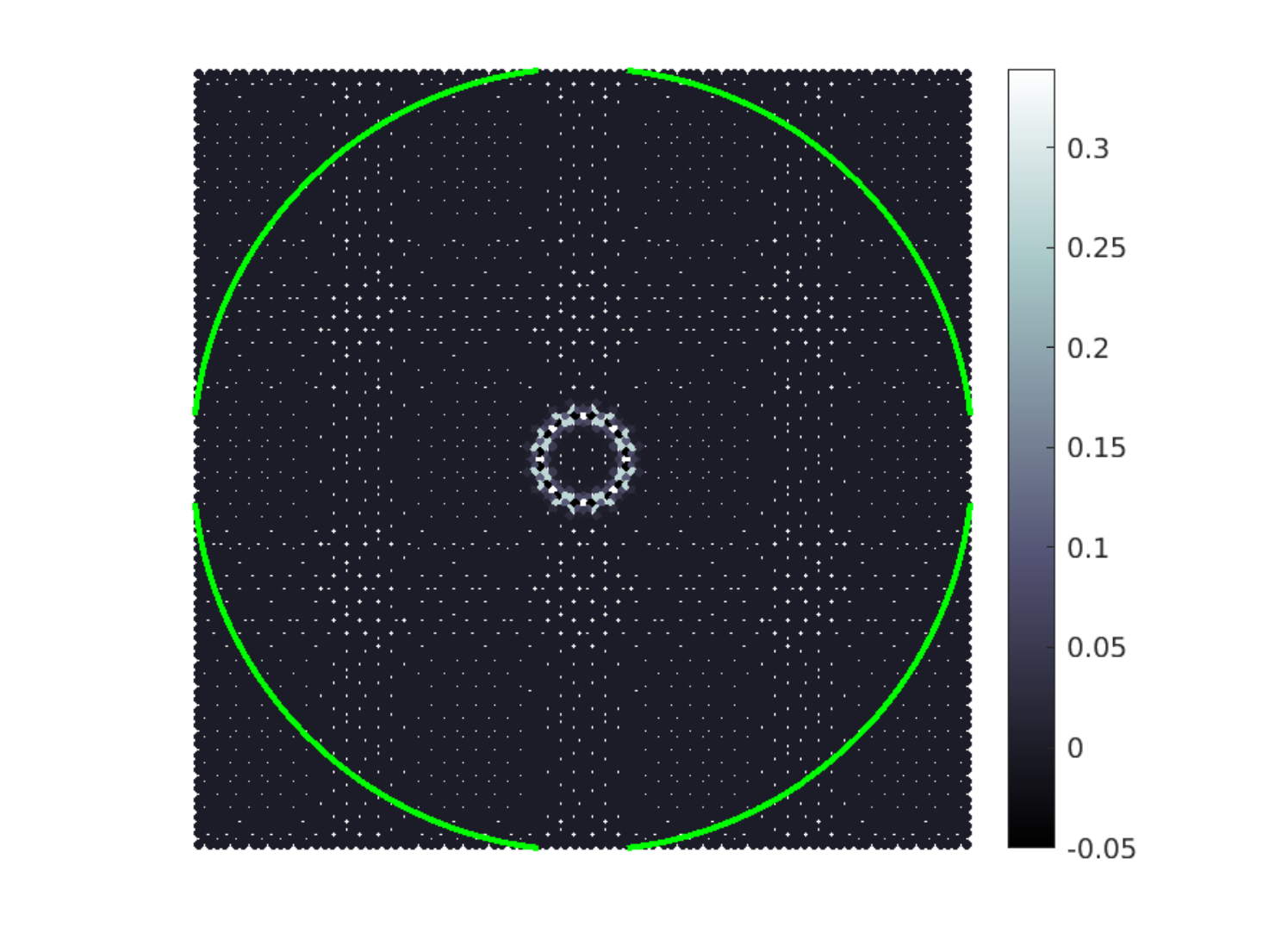}
     \end{overpic}
  \end{minipage}
	\begin{minipage}[b]{0.32\textwidth}
    \vspace{-4mm}\begin{overpic}[width=\textwidth]{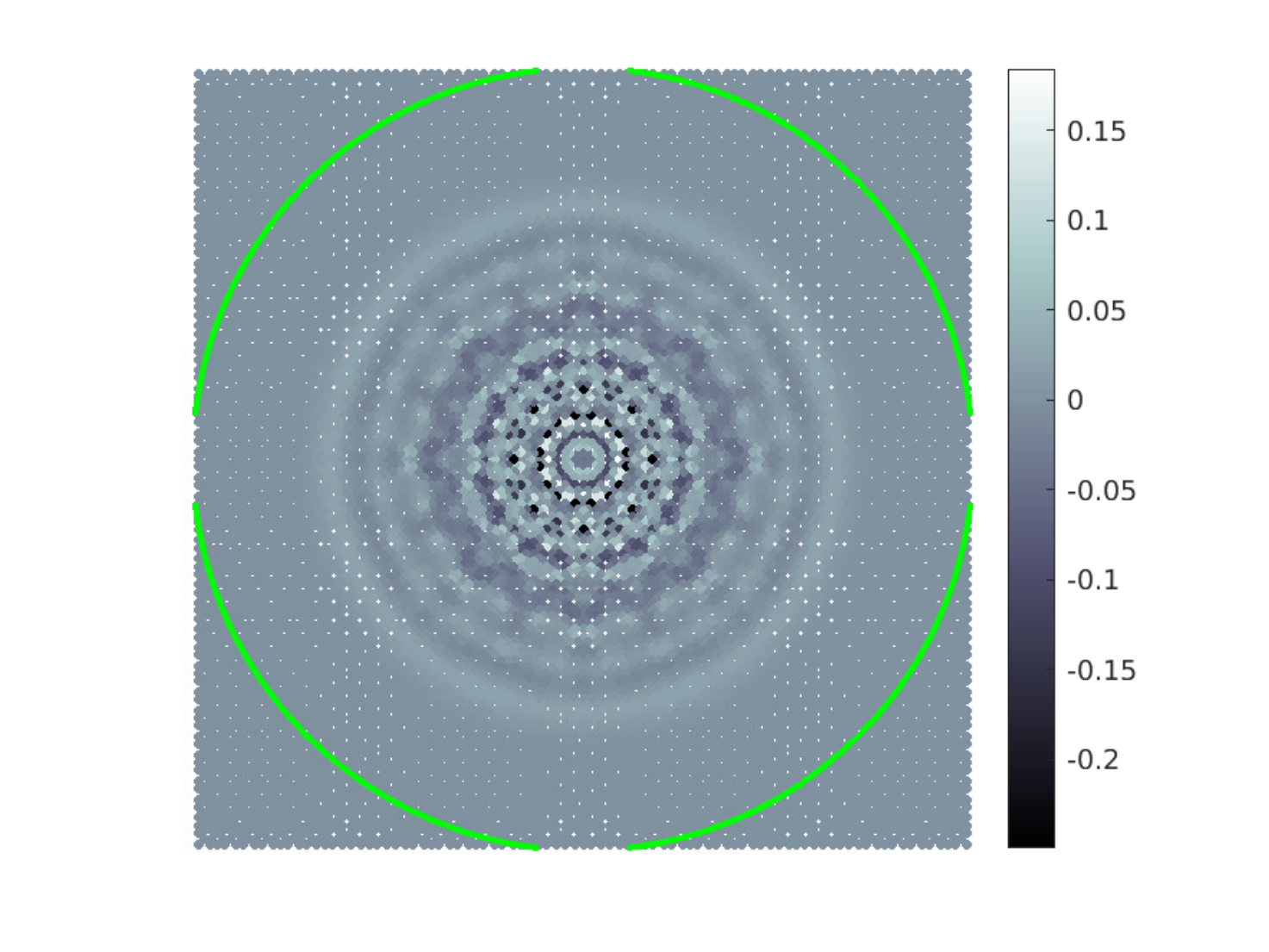}
     \end{overpic}
  \end{minipage}
	\begin{minipage}[b]{0.32\textwidth}
    \vspace{-4mm}\begin{overpic}[width=\textwidth]{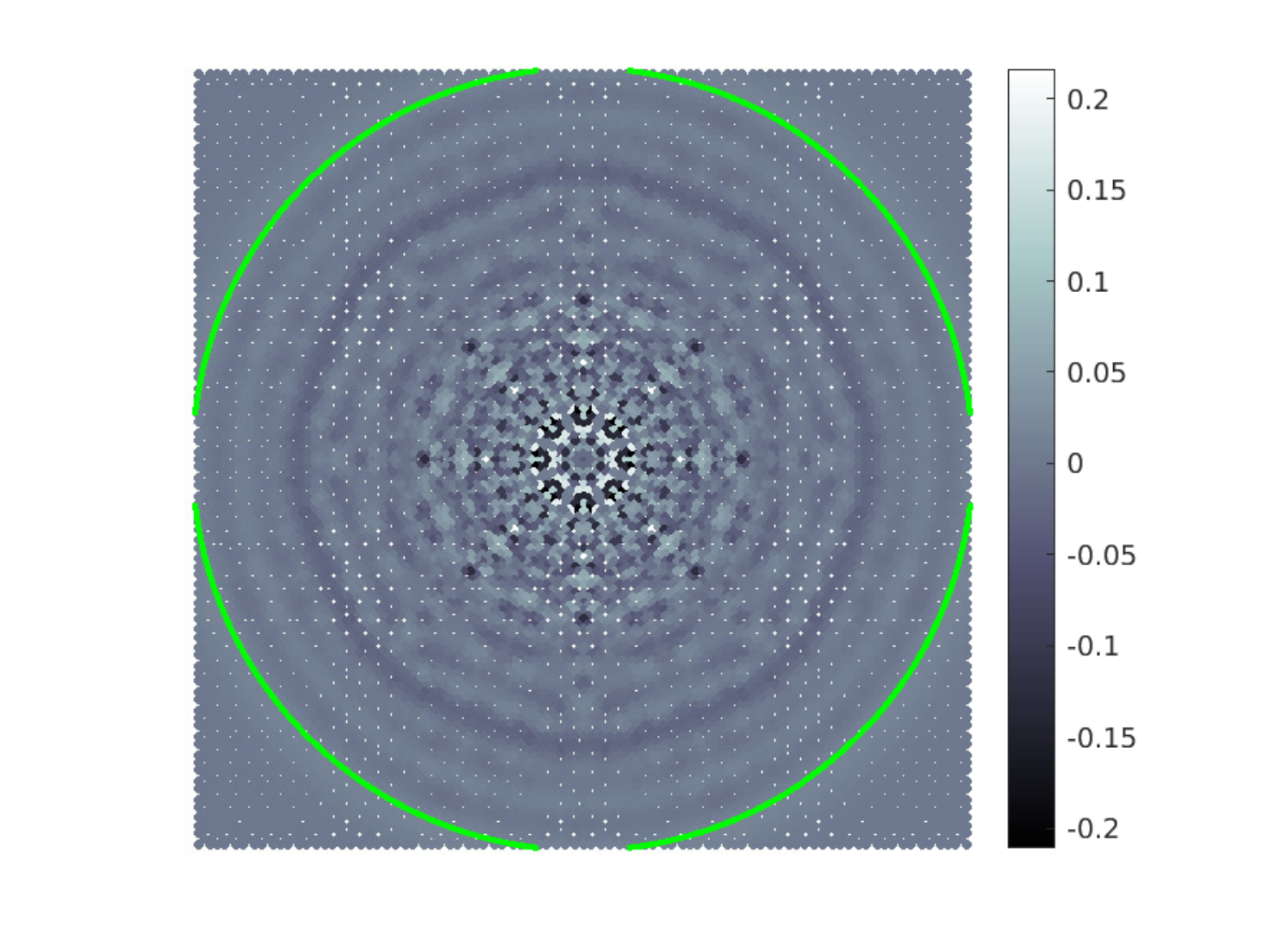}
     \end{overpic}
  \end{minipage}
  \vspace{-4mm}\caption{Top row: $\texttt{log10}(|u(t)|)$ computed for the Schr\"odinger equation at times $t=1$ (left), $t=10$ (middle) and $t=50$ (right). The white circle corresponds to the truncation of the tiling ($10001$ vertices) when computing $u_{\mathrm{FS}}$ for \cref{fig:AB_1}. Bottom row: $u(t)$ computed for the wave equation at times $t=1$ (left), $t=30$ (middle) and $t=50$ (right). The green circle corresponds to the truncation of the tiling when computing $u_{\mathrm{FS}}$ and we have zoomed in compared with the top row for clarity.}\label{fig:AB_2}
\end{figure}

We compute $u$ with an error bounded by $\epsilon=10^{-10}$. Because of the larger truncation size needed to compute the solution at larger times, the total computation time for $u$ for this example was of the order of minutes (with parallelization over quadrature points using 20 CPU cores) as opposed to seconds. The cost per linear solve to compute $R(z,\Delta_{\mathrm{AB}})$ scales as $\mathcal{O}(m^{{3/2}})$ for $m$ basis sites, since the local bandwidth of the infinite matrix grows. The number of basis sites needed depends on the tolerance required and the distance of $z$ to the spectrum. To demonstrate the difficulties of finite-dimensional approaches, we also consider $u_{\mathrm{FS}}$, the solution obtained by direct diagonalization\footnote{This method was chosen to distill the error associated with truncating the AB tiling as opposed to errors when approximating the exponential of a finite matrix via other methods.} of the Galerkin truncation $P_n\Delta_\mathrm{AB}P_n$ for $n=10001$ (chosen to maintain rotational symmetry). \cref{fig:AB_1} (right) shows the difference in norm between the computed $u$ and $u_{\mathrm{FS}}$. The difference is small for small time $t$. However, the difference begins to grow quickly as $u_{\mathrm{FS}}$ becomes inaccurate because of boundary effects. This is demonstrated in \cref{fig:AB_2}, which plots the computed solutions $u$. As $t$ increases, we need more vertices (basis vectors) to capture the solution. The method of this paper allows this to be done automatically in a rigorous and adaptive manner.
 
\subsection{Complex perturbed fractional diffusion equation}\label{complex_diff_example} Our next example demonstrates the results of \cref{analytic_semigp_sec} for $\delta>0$, and also deals with more general Laplace transform inversions. We consider the following equation on $L^2(\mathbb{R})$
\begin{equation}\setlength\abovedisplayskip{6pt}\setlength\belowdisplayskip{6pt}
\label{fractional_diff}
D_t^{\iota}u=u_{xx}+{iu}/{(1+x^2)},\quad 0<\iota\leq 1,
\end{equation}
where $D_t^{\iota}$ denotes Caputo's fractional derivative \cite{mainardi2000mittag}. Such fractional diffusion problems are attracting increasing interest and have many applications \cite{zeng2014crank,diethelm2010analysis}. The solution is computed using the same method as in \cref{analytic_semigp_sec}, but now with the resolvent $(A-zI)^{-1}$ replaced by $(Az^{1-\iota}-z)^{-1}$. We use the Malmquist--Takenaka basis functions and the initial condition $u_0$ from \eqref{variable_diff_example}. \cref{fig:frac_diff} shows solutions computed with an error bound $\epsilon=10^{-12}$ for various $\iota$ and times $t$. Even for this small value of $\epsilon$, the computational times for this example (including forming the linear system, solving for all quadrature points etc.) were at most on the order of seconds on a modest laptop without parallelization. Again, the cost of approximating $(Az^{1-\iota}-z)^{-1}$ scales as $\mathcal{O}(m)$ (using $\mathcal{O}(m)$ basis functions) owing to the banded matrix representation. The size of $m$ needed depends on the point $z$, as well as the required accuracy.

Despite the same initial conditions, the diffusion changes dramatically with $\iota$, with a slower diffusion process occurring for smaller $\iota$ ($\iota<1$ is known as sub-diffusion). When applying domain truncation techniques, it can be challenging, particularly for small $\iota$, to determine a suitable truncation for a given desired accuracy. Similarly, even when using global basis expansions that do not use a domain truncation, the number of basis functions needed to capture the solution can be hard to predict a priori (and one would also want a proof of convergence). Again, the method of this paper overcomes these issues rigorously and adaptively. Another benefit of contour methods for fractional PDEs is the reduced memory requirement compared to time stepping methods, which typically store the history of the solution because of the non-local nature of $D_t^{\iota}$ \cite{li2009space}. Moreover, we avoid having to resolve singularities of the solution as $t\downarrow 0$. Instead, we compute accurate solutions simultaneously over large time intervals using \cref{alg:spec_meas}.

\begin{figure}
  \centering

	\begin{minipage}[b]{0.32\textwidth}
    \begin{overpic}[width=\textwidth]{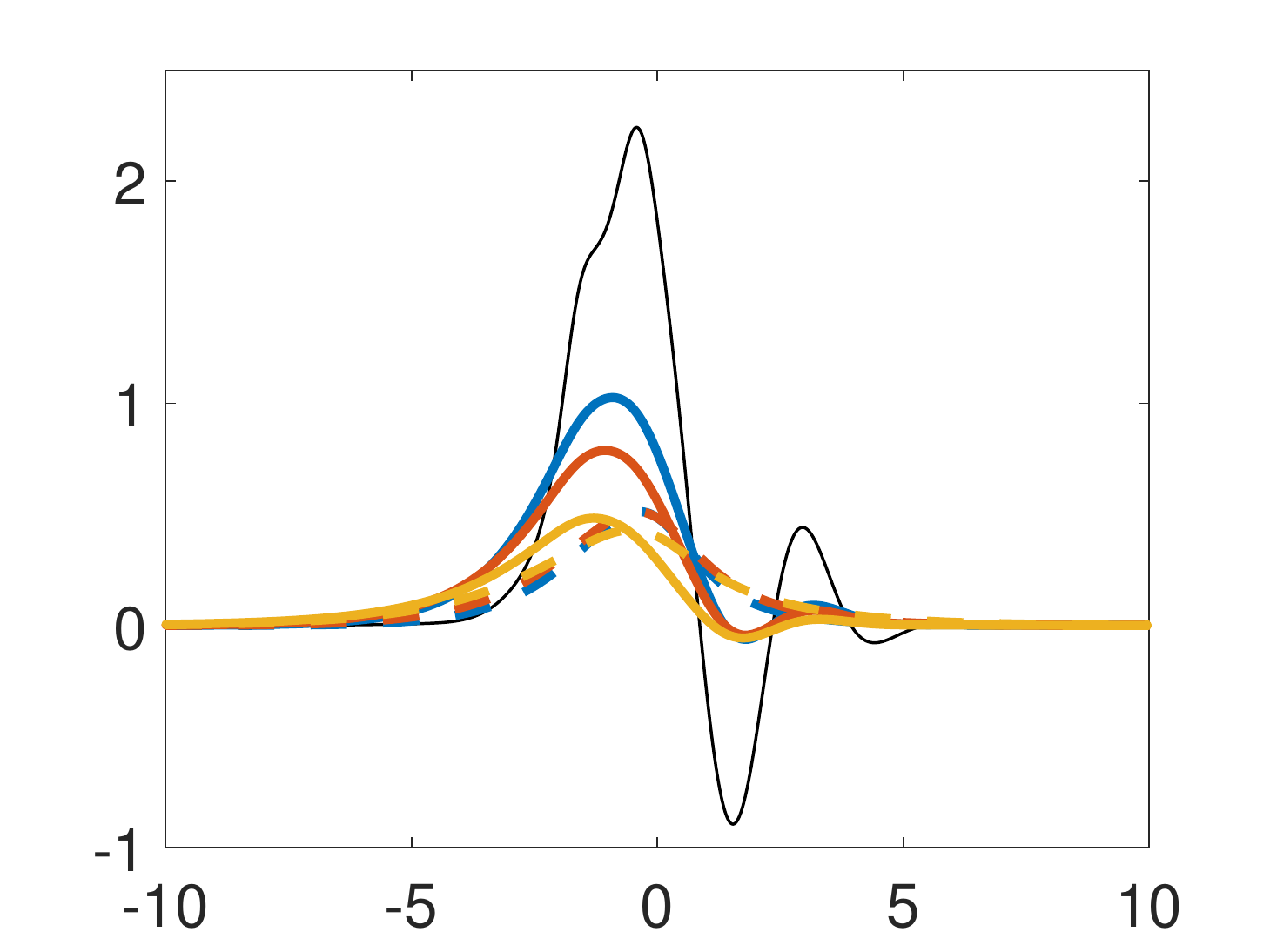}
    \put (40,73) {$\iota=0.25$}
		\put(50,-4) {$x$}
     \end{overpic}
  \end{minipage}
	\begin{minipage}[b]{0.32\textwidth}
    \begin{overpic}[width=\textwidth]{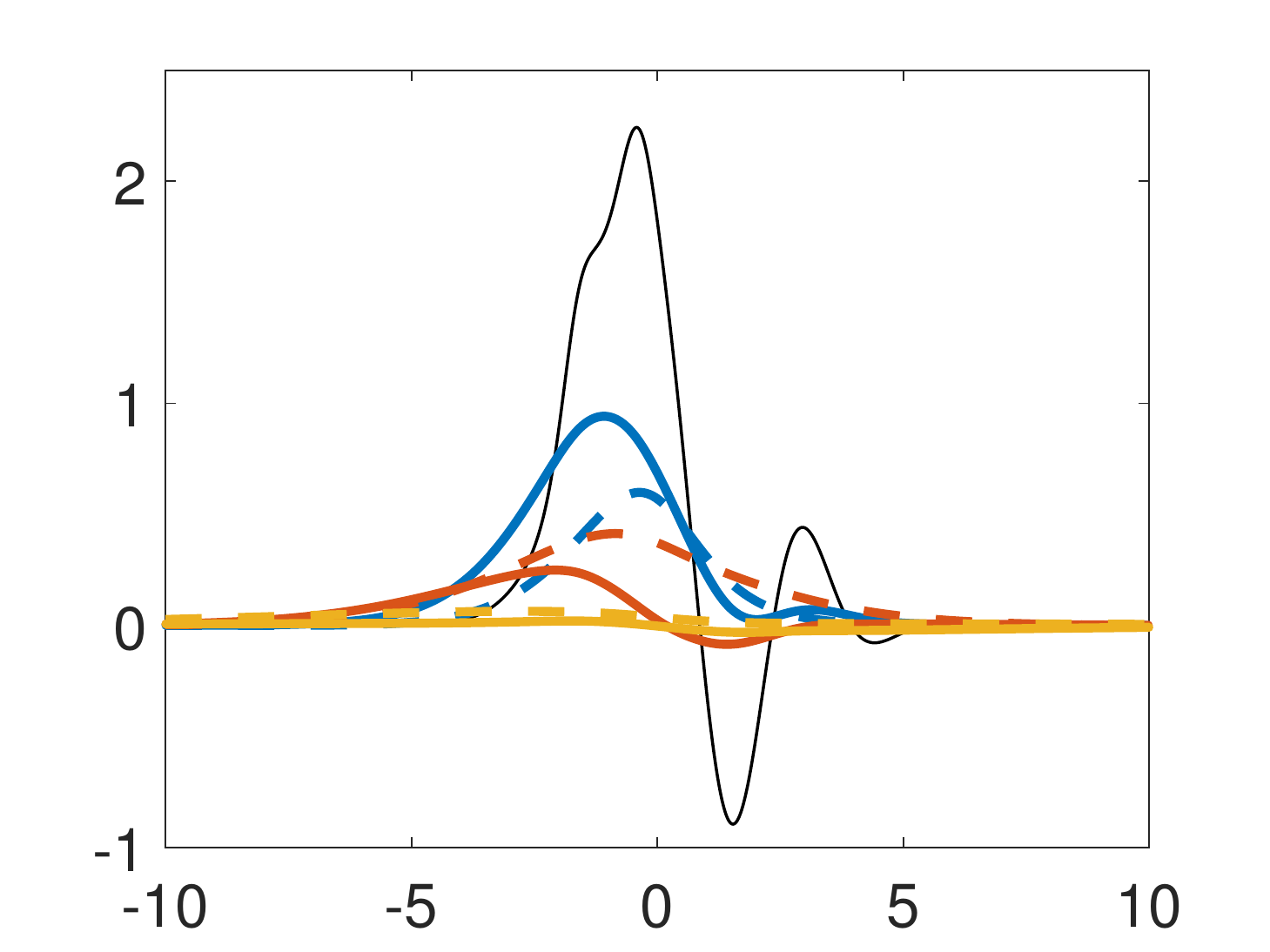}
		\put (40,73) {$\iota=0.7$}
		\put(50,-4) {$x$}
     \end{overpic}
  \end{minipage}
	\begin{minipage}[b]{0.32\textwidth}
    \begin{overpic}[width=\textwidth]{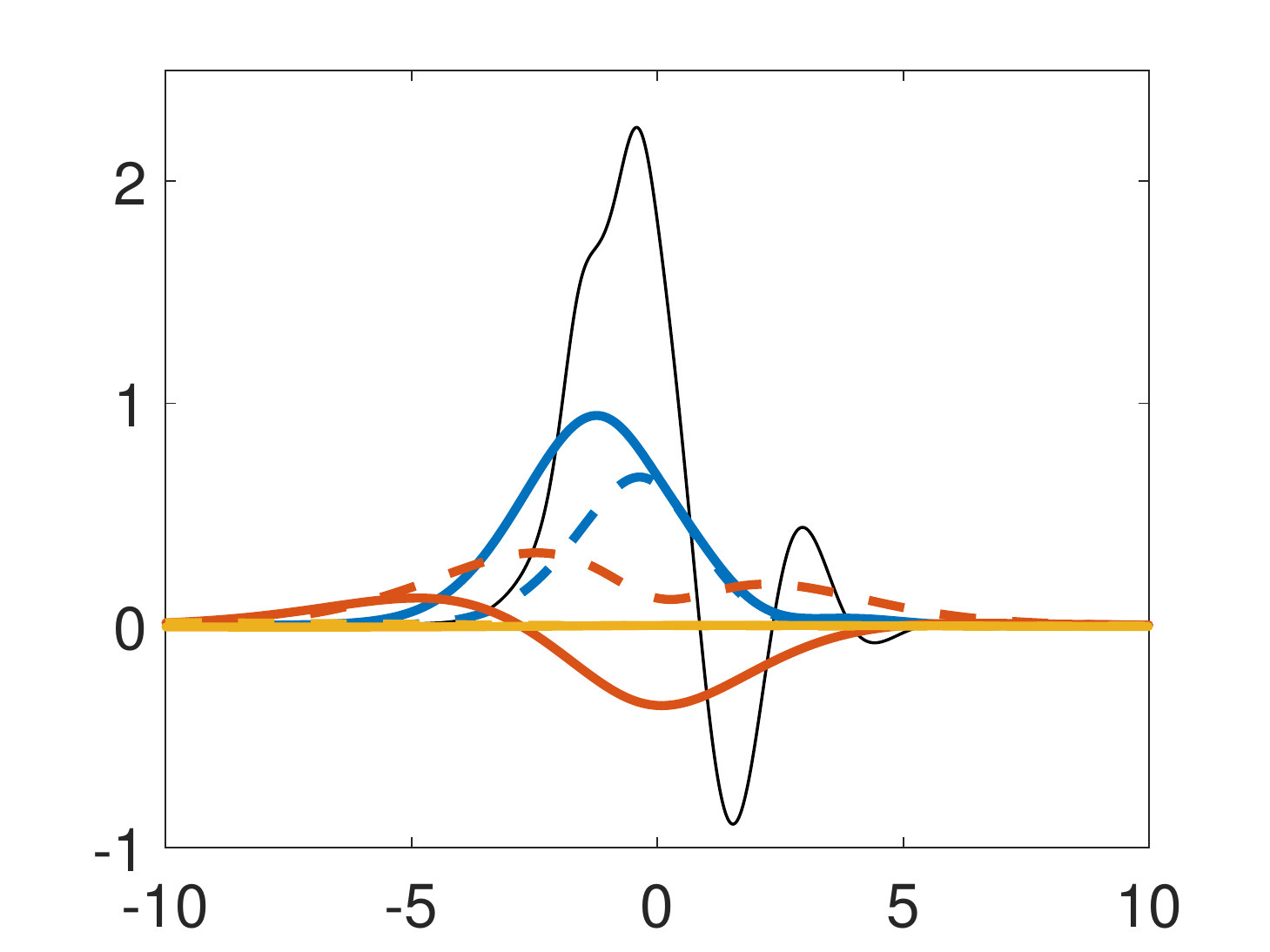}
		\put (42,73) {$\iota=1$}
		\put(50,-4) {$x$}
     \end{overpic}
  \end{minipage}
  \vspace{-1mm}\caption{Solution of \eqref{fractional_diff} for various $\iota$ at $t=1$ (blue), $t=5$ (red) and $t=50$ (yellow). The real parts are shown as solid lines, and the imaginary parts as dashed lines ($u_0$ shown in black).}\label{fig:frac_diff}\vspace{-1mm}
\end{figure}

\subsection{Euler--Bernoulli beam equation with Kelvin--Voigt damping}\label{Elastic_example}

As a final example, we consider the following second-order problem on the interval $[-1,1]$
\begin{equation}\setlength\abovedisplayskip{6pt}\setlength\belowdisplayskip{6pt}\label{beam_example_equation}
\rho(x)u_{tt}=-[a(x)u_{xx}+b(x)u_{xxt}]_{xx},\quad u(\pm 1,t)=u_x(\pm,t)=0,
\end{equation}
with $u(x,0)=(1-x^2)\sin(5\pi x),u_t(x,0)=x$, and the choices
$$\setlength\abovedisplayskip{6pt}\setlength\belowdisplayskip{6pt}
\rho(x)=1+{x}/{2},\quad a(x)=1+{x^3}/{2},\quad b(x)=\tanh(10(x-0.7))+1.01.
$$
The damping function $b$ models the suppression of vibrations of a clamped elastic beam \cite{liu1998spectrum}. This example has a non-normal generator with complex spectrum, and has non-empty continuous spectrum \cite{zhang2011spectrum} despite being posed on a finite interval. The problem is well-posed for $(u_0,u_0')\in\mathcal{H}_0^2([-1,1])\times L^2([-1,1])$, but for simplicity, we measure the error of computed solutions in $L^2([-1,1])$. To solve the shifted linear systems, we use the ultraspherical spectral method \cite{Olver_SIAM_Rev}.\footnote{By using sparse approximations and suitable rectangular truncations, corresponding to bounding tails of expansions of coefficients and solutions in ultraspherical polynomials, we gain error control. See \cite{brehard2018validated} for an interval arithmetic implementation of the ultraspherical spectral method.} \cref{fig:beam_1} shows computed solutions with an error bound $\epsilon=10^{-12}$. Even for this small value of $\epsilon$, the computational times (including forming the linear system, solving for all quadrature points etc.) were at most on the order of seconds on a modest laptop without parallelization. The cost of approximating the resolvent using $m$ basis functions scales as $\mathcal{O}(m)$ owing to the almost banded matrix representation (filled in rows correspond to the boundary conditions). Since $b$ is only significant from zero for $x>0.7$, this region exhibits the anticipated damped behavior, whilst the section $x<0.7$ undergoes almost free vibration.

\begin{figure}\vspace{-0.5mm}
  \centering
  \begin{minipage}[b]{0.48\textwidth}
    \begin{overpic}[width=\textwidth]{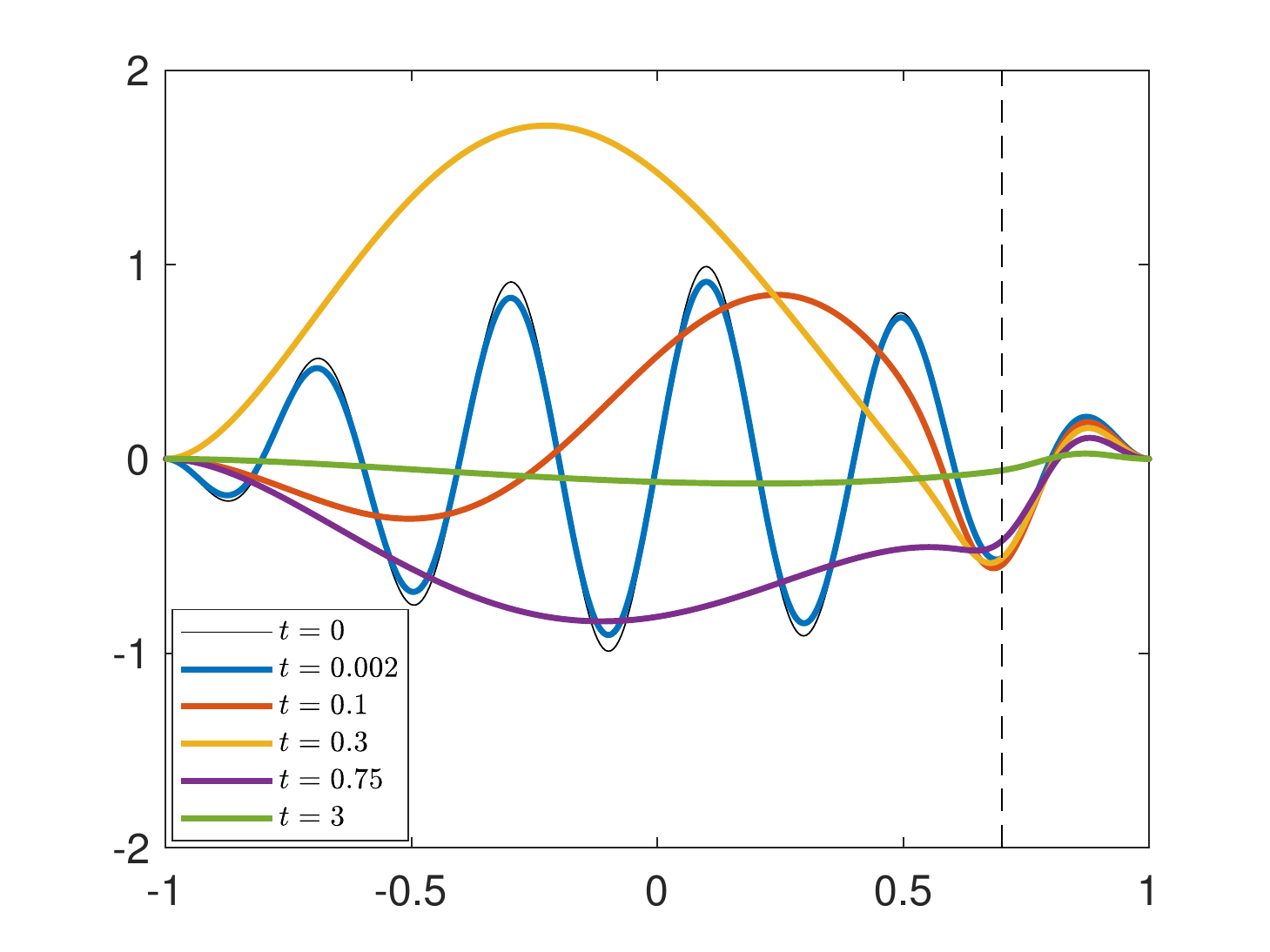}
		\put(50,-1) {$x$}
     \end{overpic}
  \end{minipage}
\vspace{-2.5mm}\caption{Solutions to \eqref{beam_example_equation} computed at different times $t$ with an error bound $\epsilon=10^{-12}$. The dashed line shows the transition region between small and large damping function $b$.}\label{fig:beam_1}\vspace{-1mm}
\end{figure}

\section{Concluding remarks}\label{conc_sect}

We have developed an algorithm that computes semigroups on infinite-dimensional separable Hilbert spaces with explicit and rigorous error control. \cref{alg:l2_semigroup} summarizes the approach. We combine a regularized functional calculus, suitable contours and quadrature, and machinery used to compute the resolvent with error control. We derive results for both the canonical Hilbert space $l^2(\mathbb{N})$ and partial differential operators on unbounded domains. For analytic semigroups, we derive a stable and rapidly convergent scheme. \cref{alg:spec_meas} summarizes this method, which is suitable for infinite-dimensional operators and more general Laplace transform inversions. The result is a fully adaptive and rigorous method, with the flexibility of only requiring solving linear systems with error control. 

There are numerous possible extensions of the current work where error control is an advantageous feature. Our algorithm could be a building block for inhomogeneous, non-autonomous and non-linear evolution equations. For example, by controlling the numerical error of exponentials and similar functions of linearized operators, this could be useful for low regularity integrators \cite{rousset2021general,ostermann2019two,bruned2020resonance}. Other avenues to explore include generalizations of \cref{PDO_thm1} for different classes of coefficients (e.g., singular terms, which are particularly relevant to the Scr\"odinger case in \eqref{cauchy_prob_scrod}) and different choices of basis functions. Another possibility is an error control algorithm using finite element discretizations instead of spectral methods. This extension would be particularly useful for complicated domains. For conforming finite elements with sufficient regularity, it should be possible to reduce the problem to one over $l^2(\mathbb{N})$ (as done in \cref{PDO_thm1}) by computing the needed inner products and modifying our algorithm with a suitable Gramian matrix.

Finally, we point out that contour methods are certainly not the most efficient method for every type of semigroup. This point is reflected by the diverse list of methods in \cref{sec_intro_sec} and the fact that contour methods have historically been used mainly for analytic semigroups. Contour methods do, however, provide a positive answer to the \textit{foundations} questions Q.1 and Q.2 in the introduction and lend themselves readily to an infinite-dimensional ``solve-then-discretize'' approach. In the future, we will explore using other techniques in answering similar foundational questions in infinite dimensions.

\appendix
\section{Auxiliary results}\label{sec:appendix}
We give two results needed in our proofs.

\begin{lemma}
\label{boring}
Let $x\in l^2(\mathbb{N})$ and suppose that we have access to evaluation functions $\{f_{j,m}\}$ as in \eqref{needinlemma}. Then given any $\epsilon>0$, we can compute an approximation $x^{\epsilon}$ with finite support (with respect to the canonical basis) such that $\|x-x^{\epsilon}\|\leq \epsilon$.
\end{lemma}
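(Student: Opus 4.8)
The plan is to exploit the norm evaluation function $f_{0,m}$ to certify the size of the truncation tail, which is the only non-routine ingredient: the coefficients are directly accessible through $f_{j,m}$, but a priori we have no way of knowing how many of them to retain. Writing $x_j=\langle x,e_j\rangle$ and $b_{j,m}:=f_{j,m}(x)$, the obvious candidate is a truncation $x^\epsilon=\sum_{j=1}^n b_{j,m}e_j$, and its error splits as
\begin{equation*}
\|x-x^\epsilon\|^2=\sum_{j=1}^n|x_j-b_{j,m}|^2+\sum_{j>n}|x_j|^2\leq n\,2^{-2m}+\tau_n,
\end{equation*}
where $\tau_n:=\|x\|^2-\sum_{j=1}^n|x_j|^2=\sum_{j>n}|x_j|^2$ is the tail. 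The first term is explicitly controlled once $n$ and $m$ are fixed, so the whole difficulty reduces to bounding $\tau_n$ from above using only the available approximations.

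First I would build a computable upper bound for $\tau_n$. Setting $a_m:=f_{0,m}(x)$, we have $\|x\|^2\leq a_m+2^{-m}$, and since $\bigl||x_j|-|b_{j,m}|\bigr|\leq 2^{-m}$ we get $|x_j|^2\geq\bigl(\max\{|b_{j,m}|-2^{-m},0\}\bigr)^2$. Combining these gives
\begin{equation*}
\tau_n\leq(a_m+2^{-m})-\sum_{j=1}^n\bigl(\max\{|b_{j,m}|-2^{-m},0\}\bigr)^2=:U_{n,m},
\end{equation*}
and $U_{n,m}$ is computable from the finite set of evaluations $\{f_{0,m},f_{1,m},\ldots,f_{n,m}\}$. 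The algorithm then searches over the pair $(n,m)$ — for instance by dovetailing, or by increasing a single index $k$ and taking $n=m=k$ — until the computable quantity $U_{n,m}+n\,2^{-2m}$ first drops below $\epsilon^2$, and outputs $x^\epsilon=\sum_{j=1}^n b_{j,m}e_j$. By the two displayed inequalities this certifies $\|x-x^\epsilon\|\leq\epsilon$.

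The crux of the argument, and the step I expect to require the most care, is showing that the search terminates. Since $x\in l^2(\mathbb{N})$, the true tail $\tau_n$ decreases monotonically to $0$, so I may fix $n$ with $\tau_n\leq\epsilon^2/4$. For this fixed $n$, as $m\to\infty$ we have $a_m+2^{-m}\to\|x\|^2$ and $\max\{|b_{j,m}|-2^{-m},0\}\to|x_j|$ for each $j\leq n$, whence $U_{n,m}\to\tau_n\leq\epsilon^2/4$; moreover $n\,2^{-2m}\to0$. Therefore the stopping criterion $U_{n,m}+n\,2^{-2m}\leq\epsilon^2$ is met for all sufficiently large $m$, and the search halts after finitely many steps. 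The one thing to verify is that this procedure genuinely fits the framework of \cref{Gen_alg}: at each step it inspects only finitely many evaluation functions and its stopping decision depends only on their values, which is immediate from the form of $U_{n,m}$. No information beyond the functions $\{f_{j,m}\}$ of \eqref{needinlemma} is used, so the lemma follows.
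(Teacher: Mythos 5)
Your proposal is correct and follows essentially the same route as the paper's proof: truncate to $\sum_{j\leq n}f_{j,m}(x)e_j$, split the error into coefficient error plus tail, and use the identity $\sum_{j>n}|x_j|^2=\langle x,x\rangle-\sum_{j\leq n}|x_j|^2$ together with the norm evaluation $f_{0,m}$ to adaptively certify when the tail is small enough. Your explicit computable upper bound $U_{n,m}$ and the termination argument simply spell out details the paper leaves implicit in the phrase ``compute the right-hand side to any given accuracy \dots\ adaptively.''
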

\begin{proof}
Let $x^\epsilon=\sum_{j=1}^Mf_{j,m}(x)e_j,$ where we choose $M$ and $m$ in the following manner. Clearly $x^{\epsilon}$ has finite support with respect to the canonical basis. We also have that $
\|x-x^{\epsilon}\|^2\leq \sum_{j=1}^M2^{-2m}+\sum_{j>M}|x_j|^2$ and so we must choose $M$ and $m$ so that this bound is less than $\epsilon^2$. Given $M$, we can choose $m$ so that the first term is bounded by $\epsilon^2/2$, so it suffices to choose $M$ so that the second term is bounded by $\epsilon^2/2$. But we have that $\sum_{j>M}|x_j|^2=\langle x,x\rangle -\sum_{j=1}^M |x_j|^2$.
Given the evaluation functions (now making use of the $f_{0,m}$ that approximate $\|x\|^2$), we can compute the right-hand side of this equation to any given accuracy. By doing this for successive $M$, we can compute an $M$ adaptively such that $\sum_{j>M}|x_j|^2\leq\epsilon^2/2$.
\end{proof}

Next, we consider evaluating the resolvent. The following proposition uses an adaptive least-squares approximation $[P_nT^*TP_n]^{-1}P_nT^*x$ and a posteriori bounds on residuals of the corresponding infinite-dimensional linear system.

\begin{proposition}
\label{res_est1}
Consider the setup in \cref{disc_thm_sec}. Given $\epsilon>0$, there exists an algorithm $\Gamma_{\epsilon}$ that when given $(T,x)\in\mathcal{C}(l^2(\mathbb{N}))\times l^2(\mathbb{N})$ with $0\notin\mathrm{Sp}(T)$, uses $\Lambda_1$ (but now for $T$ instead of $A$) to compute a vector $\Gamma_{\epsilon}(T,x)$ such that:
\begin{enumerate}
	\item $\Gamma_{\epsilon}(T,x)$ has finite support with respect to the canonical basis.
	\item For any input, $	\left\|\Gamma_{\epsilon}(T,x)-T^{-1}x\right\|\leq \epsilon\|T^{-1}\|$.
\end{enumerate}
\end{proposition}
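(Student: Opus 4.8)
The plan is to reduce the problem to finding a finitely supported vector whose \emph{residual} $\|Tz-x\|$ is verifiably small, exploiting the bound
$$
\|z - T^{-1}x\| = \|T^{-1}(Tz - x)\| \le \|T^{-1}\|\,\|Tz - x\|,
$$
which is valid because $0\notin\mathrm{Sp}(T)$ forces $T$ to be boundedly invertible. Thus if the algorithm returns any finitely supported $z$ with $\|Tz-x\|\le\epsilon$, conclusion (2) follows at once, and conclusion (1) is immediate. Crucially, the algorithm never needs to know $\|T^{-1}\|$: this factor appears only in the a posteriori estimate, not in the computation.

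First I would show that the residual is computable \emph{from above} to arbitrary accuracy using $\Lambda_1$. For finitely supported $z=\sum_{k\le n}z_k e_k$, expand $\|Tz-x\|^2=\|Tz\|^2-2\,\mathrm{Re}\langle Tz,x\rangle+\|x\|^2$. The term $\|Tz\|^2=\sum_{j,k\le n}z_j\overline{z_k}\langle Te_j,Te_k\rangle$ is a finite sum handled by $f^{(2)}_{j,k,m}$, and $\|x\|^2$ by $f_{0,m}$. The cross term reduces to $\langle Te_k,x\rangle=\sum_j \langle Te_k,e_j\rangle\,\overline{\langle x,e_j\rangle}$, an absolutely convergent series whose tail past index $J$ is controlled by Cauchy--Schwarz through $(\sum_{j>J}|\langle Te_k,e_j\rangle|^2)^{1/2}(\sum_{j>J}|\langle x,e_j\rangle|^2)^{1/2}$. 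Each squared tail is a full norm minus a finite partial sum ($\|Te_k\|^2$ via $f^{(2)}_{k,k,m}$ and $\|x\|^2$ via $f_{0,m}$, minus truncations of the matrix column and of $x$), so both tails become verifiably small as $J$ grows. Rounding the inexact $2^{-m}$ evaluations outward then yields a computable quantity $E(z,\delta)$ with $\|Tz-x\|\le E(z,\delta)\le \|Tz-x\|+\delta$.

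Next I would guarantee that good candidates exist. Since $T^{-1}x\in\mathcal{D}(T)$ with $T(T^{-1}x)=x$ and $\mathrm{span}\{e_n\}$ is a core of $T$, there is a sequence $w_i\in\mathrm{span}\{e_n\}$ with $Tw_i\to x$, hence $\|Tw_i-x\|\to 0$. As each $w_i$ lives in a fixed finite-dimensional coordinate subspace on which $T$ acts boundedly, perturbing its coefficients to rationals changes $Tw_i$ by at most $\sum_k|\Delta c_k|\,\|Te_k\|$, so we may take the $w_i$ to have entries in $\mathbb{Q}+i\mathbb{Q}$. Hence finitely supported rational vectors with residual below any prescribed threshold exist. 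In practice the candidates are produced by the least-squares solves $[P_nT^*TP_n]^{-1}P_nT^*x$ advertised before the statement (their residual is minimal over $\mathrm{ran}(P_n)$, hence tends to $0$ by the core property), but for the existence claim it suffices to enumerate rational finitely supported vectors.

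The algorithm then dovetails: enumerate rational finitely supported $z_1,z_2,\dots$, compute $a_i:=E(z_i,\epsilon/4)$ for each, and output the first $z_i$ with $a_i\le 3\epsilon/4$. Correctness is clear since $\|Tz_i-x\|\le a_i\le 3\epsilon/4\le\epsilon$; termination holds because some rational $z^*$ has $\|Tz^*-x\|\le\epsilon/2$, whence $E(z^*,\epsilon/4)\le 3\epsilon/4$ and the test must succeed no later than $z^*$. I expect the main obstacle to be the rigorous \emph{one-sided} control of the residual in the presence of the infinite tail sums and the $2^{-m}$ input inaccuracies, i.e.\ ensuring every quantity is bounded in the correct direction so that a certified residual truly implies the error bound; the least-squares structure together with the core property are exactly what make both the verification and the termination go through without any a priori knowledge of $\|T^{-1}\|$ or of the conditioning of $P_nT^*TP_n$.
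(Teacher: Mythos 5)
Your proposal is correct, and it rests on the same two pillars as the paper's proof: the a posteriori bound $\|z-T^{-1}x\|\le\|T^{-1}\|\,\|Tz-x\|$ (so only the residual needs certifying, never $\|T^{-1}\|$ itself), and the core property of $\mathrm{span}\{e_n\}$, which guarantees finitely supported vectors of arbitrarily small residual exist. Where you differ is in the algorithmic realization. The paper first truncates $x$ to a finitely supported $x_{(m)}$ via \cref{boring}, so that the residual verification $\|T\Gamma_{m,n}-x_{(m)}\|^2=\|T\Gamma_{m,n}\|^2-2\mathrm{Re}\langle T\Gamma_{m,n},x_{(m)}\rangle+\|x_{(m)}\|^2$ involves only \emph{finite} sums of evaluations from $\Lambda_1$; you instead certify the residual against the full $x$, which forces you to control the infinite cross-term series $\langle Te_k,x\rangle$ by Cauchy--Schwarz tail estimates (each tail being a norm minus a partial sum, i.e.\ the same mechanism as \cref{boring} applied to the columns $Te_k$) --- this works, but is an extra layer the paper deliberately avoids. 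Second, the paper's candidates are the computed least-squares iterates $\widetilde B_n^{-1}\widetilde C_{m,n}x_{(m)}$, which requires approximating $B_n^{-1}=[P_nT^*TP_n]^{-1}$ with error control and hence some care about conditioning; your dovetailed enumeration of rational finitely supported vectors sidesteps matrix inversion entirely, making termination trivial to justify, at the price of being computationally useless in practice (the paper's least-squares structure is what is actually implemented and what makes the method adaptive). In short: your argument is a legitimate, somewhat more elementary existence proof of the algorithm; the paper's is a constructive version of the same idea in which every certified quantity is a finite sum and the candidate sequence is the one used in computations.
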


\begin{proof}
Let $(T,x)$ denote a suitable input as in the statement of the proposition. Since $0\notin\mathrm{Sp}(T)$, $n=\mathrm{rank} (P_n)=\mathrm{rank} (TP_n)$. Hence we can define the least-squares solution
$$\setlength\abovedisplayskip{6pt}\setlength\belowdisplayskip{6pt}
\widetilde\Gamma_{n}(T,x):=\mathrm{argmin}_{y}\|TP_ny-x\|=[P_nT^*TP_n]^{-1}P_nT^*x.
$$
The space $\mathrm{span}\{e_n:n\in\mathbb{N}\}$ forms a core of $T$. It follows by invertibility of $T$ that given any $\delta>0$, there exists an $m=m(\delta)$ and a $y=y(\delta)$ with $P_my=y$ such that
$$\setlength\abovedisplayskip{6pt}\setlength\belowdisplayskip{6pt}
\|Ty-x\|\leq\delta.
$$
It follows that for all $n\geq m$, $\|T\widetilde\Gamma_{n}(T,x)-x\|\leq \|Ty-x\|\leq \delta$ and hence that
$$\setlength\abovedisplayskip{6pt}\setlength\belowdisplayskip{6pt}
\|\widetilde\Gamma_{n}(T,x)-T^{-1}x\|\leq \delta\|T^{-1}\|.
$$
Since $\delta>0$ was arbitrary, we see that $\widetilde\Gamma_{n}(T,x)$ converges to $T^{-1}x$ as $n\rightarrow\infty$.

For $n,m\in\mathbb{N}$, define the finite matrices
$$\setlength\abovedisplayskip{6pt}\setlength\belowdisplayskip{6pt}
B_n=P_nT^*TP_n,\quad C_{m,n}=P_nT^*P_{m}.
$$
Given the evaluation functions in $\Lambda_1$, we have access to the entries of these matrices to any desired accuracy. It follows that we can compute approximations of $B_n$ and $C_{m,n}$ denoted $\widetilde B_n$ and $\widetilde C_{m,n}$ respectively with
$$\setlength\abovedisplayskip{6pt}\setlength\belowdisplayskip{6pt}
\max\left\{\|B_n-\widetilde B_n\|,\|B_n^{-1}-\widetilde B_n^{-1}\|,\|C_{m,n}-\widetilde C_{m,n}\|\right\}\leq n^{-1}.
$$
We then define
$$\setlength\abovedisplayskip{6pt}\setlength\belowdisplayskip{6pt}
\Gamma_{m,n}(T,x):=\widetilde B_n^{-1}\widetilde C_{m,n} x_{(m)},
$$
where $x_{(m)}=P_mx_{(m)}$ is an approximation of $P_mx$ to accuracy $m^{-1}$. The bounds $n^{-1}$ and $m^{-1}$ are for convenience only. Using the above bounds, we have that
$$\setlength\abovedisplayskip{6pt}\setlength\belowdisplayskip{6pt}
\|\Gamma_{m,n}(T,x)-\widetilde\Gamma_{n}(T,x_{(m)})\|\!\leq\! \|B_n^{-1}-\widetilde B_n^{-1}\|\!\|C_{m,n}\|\!\|x_{(m)}\|+\|\widetilde B_n^{-1}\|\!\|C_{m,n}-\widetilde C_{m,n}\|\!\|x_{(m)}\|,
$$
so that $\Gamma_{m,n}(T,x)$ converges to $T^{-1}x_{(m)}$ as $n\rightarrow\infty$. By construction, $\Gamma_{m,n}(T,x)$ has finite support with respect to the canonical basis. Given $\epsilon>0$, we choose, using \cref{boring}, $m=m(\epsilon)$ such that $\|x-x_{(m)}\|\leq \epsilon/2$. Letting $\Gamma_{m,n}$ denote $\Gamma_{m,n}(T,x)$,
\begin{align}\setlength\abovedisplayskip{6pt}\setlength\belowdisplayskip{6pt}
\|\Gamma_{m,n}-T^{-1}x_{(m)}\|^2&\leq \|T^{-1}\|^2\|T\Gamma_{m,n}-x_{(m)}\|^2\\
&=\|T^{-1}\|^2\left[\|T\Gamma_{m,n}\|^2-2\mathrm{Re}(\langle T\Gamma_{m,n},x_{(m)} \rangle)+\|x_{(m)}\|^2\right].\label{nnafjgv}
\end{align}
Since we have access to approximations of both $\langle Te_j, Te_j \rangle$ (norms of the columns of $T$) and $\langle Te_i, e_j \rangle$ in $\Lambda_1$, we can approximate all of the terms in the squared brackets of \eqref{nnafjgv} to any desired accuracy. We therefore choose $n=n(\epsilon)$ so that $\|T\Gamma_{m,n}-x_{(m)}\|^2\leq \epsilon^2/4$ and set $\Gamma_{\epsilon}(T,x)=\Gamma_{m(\epsilon),n(\epsilon)}(T,x)$ so that
$$\setlength\abovedisplayskip{6pt}\setlength\belowdisplayskip{6pt}
\left\|\Gamma_{\epsilon}(T,x)-T^{-1}x\right\|\!\leq\!\|T^{-1}\|\|x-x_{(m)}\|+\|\Gamma_{m,n}-T^{-1}x_{(m)}\|\!\leq \!\|T^{-1}\|\left(\frac{\epsilon}{2}\!+\!\frac{\epsilon}{2}\right)=\epsilon\|T^{-1}\|.
$$
This bound completes the proof.
\end{proof}

In practice, we approximate $P_mx$, $B_n$ and $C_{m,n}$ using floating-point arithmetic and then apply the above argument to bound the residual. The discretization size is increased adaptively until the specified tolerance has been reached.

\section*{Acknowledgements} I would like to thank Lorna Ayton, Andrew Horning, Arieh Iserles and Alex Townsend for interesting discussions during the completion of this work and helpful comments regarding a draft version of this paper. 

\bibliographystyle{siamplain}
\bibliography{semigroup}

\begin{thebibliography}{10}

\bibitem{al2011computing}
{\sc A.~H. Al-Mohy and N.~J. Higham}, {\em Computing the action of the matrix
  exponential, with an application to exponential integrators}, SIAM J. Sci.
  Comput., 33 (2011), pp.~488--511.

\bibitem{antoine2008review}
{\sc X.~Antoine et~al.}, {\em A review of transparent and artificial boundary
  conditions techniques for linear and nonlinear {S}chr{\"o}dinger equations},
  (2008).

\bibitem{arendt2001cauchy}
{\sc W.~Arendt, C.~J.~K. Batty, M.~Hieber, and F.~Neubrander}, {\em Cauchy
  problems}, in Vector-valued Laplace Transforms and Cauchy Problems, Springer,
  2001, pp.~109--240.

\bibitem{arnold2003discrete}
{\sc A.~Arnold, M.~Ehrhardt, and I.~Sofronov}, {\em Discrete transparent
  boundary conditions for the {S}chr{\"o}dinger equation: {F}ast calculation,
  approximation, and stability}, Commun. Math. Sci., 1 (2003), pp.~501--556.

\bibitem{batty2009unbounded}
{\sc C.~Batty}, {\em Unbounded operators: functional calculus, generation,
  perturbations}, Extracta Math., 24 (2009), pp.~99--133.

\bibitem{batty2013holomorphic}
{\sc C.~Batty, M.~Haase, and J.~Mubeen}, {\em The holomorphic functional
  calculus approach to operator semigroups}, Acta Sci. Math.(Szeged), 79
  (2013), pp.~289--323.

\bibitem{becker2020computing}
{\sc S.~Becker and A.~Hansen}, {\em Computing solutions of {S}chr{\"o}dinger
  equations on unbounded domains - {O}n the brink of numerical algorithms},
  arXiv preprint arXiv:2010.16347,  (2020).

\bibitem{ben2015can}
{\sc J.~Ben-Artzi, M.~J. Colbrook, A.~C. Hansen, O.~Nevanlinna, and M.~Seidel},
  {\em Computing {S}pectra -- {O}n the {S}olvability {C}omplexity {I}ndex
  hierarchy and towers of algorithms}, arXiv:1508.03280v5,  (2020).

\bibitem{Smale_book}
{\sc L.~Blum, F.~Cucker, M.~Shub, S.~Smale, and R.~M. Karp}, {\em Complexity
  and real computation}, Springer, New York, Berlin, Heidelberg, 1998.

\bibitem{blumlinger1989topological}
{\sc M.~Bl{\"u}mlinger and R.~F. Tichy}, {\em Topological algebras of functions
  of bounded variation {I}}, Manuscripta Math., 65 (1989), pp.~245--255.

\bibitem{boyd1987spectral}
{\sc J.~P. Boyd}, {\em Spectral methods using rational basis functions on an
  infinite interval}, J. Comput. Phys., 69 (1987), pp.~112--142.

\bibitem{brehard2018validated}
{\sc F.~Br{\'e}hard, N.~Brisebarre, and M.~Jolde{\c{s}}}, {\em Validated and
  numerically efficient {C}hebyshev spectral methods for linear ordinary
  differential equations}, ACM Trans. Math. Software, 44 (2018), pp.~1--42.

\bibitem{brenner1979rational}
{\sc P.~Brenner and V.~Thom{\'e}e}, {\em On rational approximations of
  semigroups}, SIAM J. Numer. Anal., 16 (1979), pp.~683--694.

\bibitem{bruned2020resonance}
{\sc Y.~Bruned and K.~Schratz}, {\em Resonance based schemes for dispersive
  equations via decorated trees}, arXiv:2005.01649,  (2020).

\bibitem{butcher1957numerical}
{\sc J.~Butcher}, {\em On the numerical inversion of laplace and mellin
  transforms}, in Conference on Data Processing and Automatic Computing
  Machines, Salisbury, Australia, 1957, pp.~117--1.

\bibitem{colbrook2019computationGEOM}
{\sc M.~J. Colbrook}, {\em On the computation of geometric features of spectra
  of linear operators on {H}ilbert spaces}, arXiv preprint arXiv:1908.09598,
  (2019).

\bibitem{colbrook2020foundations}
{\sc M.~J. Colbrook}, {\em The Foundations of Infinite-Dimensional Spectral
  Computations}, PhD thesis, University of Cambridge, 2020.

\bibitem{colbrook2019computing}
{\sc M.~J. Colbrook}, {\em Computing spectral measures and spectral types},
  Comm. Math. Phys., 384 (2021), pp.~433--501.

\bibitem{antun2021can}
{\sc M.~J. Colbrook, V.~Antun, and A.~Hansen}, {\em Can stable and accurate
  neural networks be computed? - {O}n the barriers of deep learning and
  {S}male's 18th problem}, arXiv:2101.08286,  (2021).

\bibitem{colbrook2019foundations}
{\sc M.~J. Colbrook and A.~C. Hansen}, {\em The foundations of spectral
  computations via the solvability complexity index hierarchy: {P}art i}, arXiv
  preprint arXiv:1908.09592,  (2019).

\bibitem{colbrook2019infinite}
{\sc M.~J. Colbrook and A.~C. Hansen}, {\em On the infinite-dimensional {QR}
  algorithm}, Numerische Mathematik, 143 (2019), pp.~17--83.

\bibitem{colbrook2020computing}
{\sc M.~J. Colbrook, A.~Horning, and A.~Townsend}, {\em Computing spectral
  measures of self-adjoint operators}, SIAM Rev., 63 (2021), pp.~489--524.

\bibitem{colbrook2019compute}
{\sc M.~J. Colbrook, B.~Roman, and A.~C. Hansen}, {\em How to compute spectra
  with error control}, Phys. Rev. Lett., 122 (2019), p.~250201.

\bibitem{crouzeix1993stability}
{\sc M.~Crouzeix, S.~Larsson, S.~Piskarev, and V.~Thom{\'e}e}, {\em The
  stability of rational approximations of analytic semigroups}, BIT, 33 (1993),
  pp.~74--84.

\bibitem{deano2017computing}
{\sc A.~Dea{\~n}o, D.~Huybrechs, and A.~Iserles}, {\em Computing highly
  oscillatory integrals}, SIAM, 2017.

\bibitem{diethelm2010analysis}
{\sc K.~Diethelm}, {\em The analysis of fractional differential equations: {A}n
  application-oriented exposition using differential operators of {C}aputo
  type}, Springer, 2010.

\bibitem{dingfelder2015improved}
{\sc B.~Dingfelder and J.~A.~C. Weideman}, {\em An improved {T}albot method for
  numerical {L}aplace transform inversion}, Numer. Algorithms, 68 (2015),
  pp.~167--183.

\bibitem{driscoll2014chebfun}
{\sc T.~A. Driscoll, N.~Hale, and L.~N. Trefethen}, {\em Chebfun guide}, 2014.

\bibitem{engel1999one}
{\sc K.-J. Engel and R.~Nagel}, {\em One-parameter semigroups for linear
  evolution equations}, vol.~194, Springer Science \& Business Media, 1999.

\bibitem{engquist1977absorbing}
{\sc B.~Engquist and A.~Majda}, {\em Absorbing boundary conditions for
  numerical simulation of waves}, Proc. Natl. Acad. Sci., 74 (1977),
  pp.~1765--1766.

\bibitem{engquist1979radiation}
{\sc B.~Engquist and A.~Majda}, {\em Radiation boundary conditions for acoustic
  and elastic wave calculations}, Comm. Pure Appl. Math., 32 (1979),
  pp.~313--357.

\bibitem{fattorini2011second}
{\sc H.~O. Fattorini}, {\em Second order linear differential equations in
  {B}anach spaces}, Elsevier, 2011.

\bibitem{fefferman1990}
{\sc C.~Fefferman and L.~Seco}, {\em On the energy of a large atom}, Bull.
  Amer. Math. Soc. (N.S.), 23 (1990), pp.~525--530.

\bibitem{fefferman1996interval}
{\sc C.~Fefferman and L.~Seco}, {\em Interval arithmetic in quantum mechanics},
  in Applications of interval computations ({E}l {P}aso, {TX}, 1995), 1996,
  pp.~145--167.

\bibitem{garrappa2013evaluation}
{\sc R.~Garrappa and M.~Popolizio}, {\em Evaluation of generalized
  {Mittag--Leffler} functions on the real line}, Adv. Comput. Math., 39 (2013),
  pp.~205--225.

\bibitem{gavrilyuk2011exponentially}
{\sc I.~Gavrilyuk, V.~Makarov, and V.~Vasylyk}, {\em Exponentially convergent
  algorithms for abstract differential equations}, Springer Science \& Business
  Media, 2011.

\bibitem{gavrilyuk2001exponentially}
{\sc I.~Gavrilyuk and V.~L. Makarov}, {\em Exponentially convergent parallel
  discretization methods for the first order evolution equations}, Comput.
  Methods Appl. Math., 1 (2001), pp.~333--355.

\bibitem{gilles2019continuous}
{\sc M.~A. Gilles and A.~Townsend}, {\em Continuous analogues of {K}rylov
  subspace methods for differential operators}, SIAM J. Numer. Anal., 57
  (2019), pp.~899--924.

\bibitem{gockler2013convergence}
{\sc T.~G{\"o}ckler and V.~Grimm}, {\em Convergence analysis of an extended
  {K}rylov subspace method for the approximation of operator functions in
  exponential integrators}, SIAM J. Numer. Anal., 51 (2013), pp.~2189--2213.

\bibitem{green1955calculation}
{\sc J.~S. Green}, {\em The calculation of the time-responses of linear
  systems}, PhD thesis, Department of Applied Mathematics, Imperial College
  London, 1955.

\bibitem{grimm2012resolvent}
{\sc V.~Grimm}, {\em Resolvent {K}rylov subspace approximation to operator
  functions}, BIT, 52 (2012), pp.~639--659.

\bibitem{hale2008computing}
{\sc N.~Hale, N.~J. Higham, and L.~N. Trefethen}, {\em Computing
  {$A^{\alpha}$,$\log(A)$}, and related matrix functions by contour integrals},
  SIAM J. Numer. Anal., 46 (2008), pp.~2505--2523.

\bibitem{Hales_Annals}
{\sc T.~C. Hales}, {\em A proof of the {K}epler conjecture}, Ann. of Math. (2),
  162 (2005), pp.~1065--1185.

\bibitem{hansen2011solvability}
{\sc A.~Hansen}, {\em On the solvability complexity index, the
  {$n$}-pseudospectrum and approximations of spectra of operators}, J. Amer.
  Math. Soc., 24 (2011), pp.~81--124.

\bibitem{higham2005scaling}
{\sc N.~J. Higham}, {\em The scaling and squaring method for the matrix
  exponential revisited}, SIAM J. Matrix Anal. Appl., 26 (2005),
  pp.~1179--1193.

\bibitem{hochbruck2010exponential}
{\sc M.~Hochbruck and A.~Ostermann}, {\em Exponential integrators}, Acta
  Numer., 19 (2010), pp.~209--286.

\bibitem{horning2020feast}
{\sc A.~Horning and A.~Townsend}, {\em {FEAST} for differential eigenvalue
  problems}, SIAM J. Numer. Anal., 58 (2020), pp.~1239--1262.

\bibitem{iserles2018magnus}
{\sc A.~Iserles, K.~Kropielnicka, and P.~Singh}, {\em {Magnus--L}anczos methods
  with simplified commutators for the {Schr{\"o}dinger} equation with a
  time-dependent potential}, SIAM J. Numer. Anal., 56 (2018), pp.~1547--1569.

\bibitem{iserles2020family}
{\sc A.~Iserles and M.~Webb}, {\em A family of orthogonal rational functions
  and other orthogonal systems with a skew-{H}ermitian differentiation matrix},
  J. Fourier Anal. Appl., 26 (2020), p.~19.

\bibitem{janssen2018aperiodic}
{\sc T.~Janssen, G.~Chapuis, and M.~De~Boissieu}, {\em Aperiodic Crystals: From
  Modulated Phases to Quasicrystals: Structure and Properties}, OUP, 2018.

\bibitem{johnstone2021bulk}
{\sc D.~Johnstone, M.~J. Colbrook, A.~Nielsen, P.~{\"O}hberg, and C.~W.
  Duncan}, {\em Bulk localised transport states in infinite and finite
  quasicrystals via magnetic aperiodicity}, arXiv:2107.05635,  (2021).

\bibitem{kato2013perturbation}
{\sc T.~Kato}, {\em Perturbation theory for linear operators}, vol.~132,
  Springer, 2013.

\bibitem{kormann2019stable}
{\sc K.~Kormann, C.~Lasser, and A.~Yurova}, {\em Stable interpolation with
  isotropic and anisotropic {G}aussians using {H}ermite generating function},
  SIAM J. Sci. Comput., 41 (2019), pp.~3839--59.

\bibitem{lasser2020computing}
{\sc C.~Lasser and C.~Lubich}, {\em Computing quantum dynamics in the
  semiclassical regime}, arXiv preprint arXiv:2002.00624,  (2020).

\bibitem{li2009space}
{\sc X.~Li and C.~Xu}, {\em A space-time spectral method for the time
  fractional diffusion equation}, SIAM J. Numer. Anal., 47 (2009),
  pp.~2108--2131.

\bibitem{liesen2013krylov}
{\sc J.~Liesen and Z.~Strakos}, {\em Krylov subspace methods: principles and
  analysis}, OUP, 2013.

\bibitem{liu1998spectrum}
{\sc K.~Liu, S.~Chen, and Z.~Liu}, {\em Spectrum and stability for elastic
  systems with global or local {Kelvin--Voigt} damping}, SIAM J. Appl. Math.,
  59 (1998), pp.~651--668.

\bibitem{lopez2004numerical}
{\sc M.~L{\'o}pez-Fern{\'a}ndez and C.~Palencia}, {\em On the numerical
  inversion of the {L}aplace transform of certain holomorphic mappings}, Appl.
  Numer. Math., 51 (2004), pp.~289--303.

\bibitem{lopez2006spectral}
{\sc M.~L{\'o}pez-Fern{\'a}ndez, C.~Palencia, and A.~Sch{\"a}dle}, {\em A
  spectral order method for inverting sectorial {L}aplace transforms}, SIAM J.
  Numer. Anal., 44 (2006), pp.~1332--1350.

\bibitem{lubich_qm_book}
{\sc C.~Lubich}, {\em From quantum to classical molecular dynamics: reduced
  models and numerical analysis}, Zurich Lectures in Advanced Mathematics, EMS,
  2008.

\bibitem{lubich2008splitting}
{\sc C.~Lubich}, {\em On splitting methods for {S}chr{\"o}dinger-{P}oisson and
  cubic nonlinear {S}chr{\"o}dinger equations}, Math. Comp., 77 (2008),
  pp.~2141--2153.

\bibitem{mainardi2000mittag}
{\sc F.~Mainardi and R.~Gorenflo}, {\em On {Mittag-L}effler-type functions in
  fractional evolution processes}, J. Comput. Appl. Math., 118 (2000),
  pp.~283--299.

\bibitem{mclachlan2002splitting}
{\sc R.~I. McLachlan and G.~R.~W. Quispel}, {\em Splitting methods}, Acta
  Numer., 11 (2002), pp.~341--434.

\bibitem{mclean2004time}
{\sc W.~McLean and V.~Thom{\'e}e}, {\em Time discretization of an evolution
  equation via laplace transforms}, IMA J. Appl. Math., 24 (2004),
  pp.~439--463.

\bibitem{niederreiter1992random}
{\sc H.~Niederreiter}, {\em Random number generation and quasi-{M}onte {C}arlo
  methods}, vol.~63 of CBMS-NSF Regional Conf. Ser. in Appl. Math., 1992.

\bibitem{olver2009gmres}
{\sc S.~Olver}, {\em {GMRES} for the differentiation operator}, SIAM journal on
  numerical analysis, 47 (2009), pp.~3359--3373.

\bibitem{Olver_code1}
{\sc S.~Olver}, {\em {ApproxFun.jl v0.8}}, {github (online)},  (2018).

\bibitem{Olver_SIAM_Rev}
{\sc S.~Olver and A.~Townsend}, {\em A fast and well-conditioned spectral
  method}, SIAM Review, 55 (2013), pp.~462--489.

\bibitem{Olver_Townsend_Proceedings}
{\sc S.~Olver and A.~Townsend}, {\em A {P}ractical {F}ramework for
  {I}nfinite-dimensional {L}inear {A}lgebra}, in Proceedings of the 1st First
  Workshop for High Performance Technical Computing in Dynamic Languages, IEEE
  Press, 2014, pp.~57--62.

\bibitem{Olver_code2}
{\sc S.~Olver and M.~Webb}, {\em {SpectralMeasures.jl}}, {github (online) },
  (2018).

\bibitem{ostermann2019two}
{\sc A.~Ostermann and C.~Su}, {\em Two exponential-type integrators for the
  “good” {B}oussinesq equation}, Numer. Math., 143 (2019), pp.~683--712.

\bibitem{palencia1993stability}
{\sc C.~Palencia}, {\em A stability result for sectorial operators in {B}anach
  spaces}, SIAM J. Numer. Anal., 30 (1993), pp.~1373--1384.

\bibitem{pang2016fast}
{\sc H.-K. Pang and H.-W. Sun}, {\em Fast numerical contour integral method for
  fractional diffusion equations}, J. Sci. Comput., 66 (2016), pp.~41--66.

\bibitem{pazy2012semigroups}
{\sc A.~Pazy}, {\em Semigroups of linear operators and applications to partial
  differential equations}, vol.~44, Springer Science \& Business Media, 2012.

\bibitem{roche1997electronic}
{\sc S.~Roche, G.~Trambly~de Laissardi{\`e}re, and D.~Mayou}, {\em Electronic
  transport properties of quasicrystals}, J. Math. Phys., 38 (1997),
  pp.~1794--1822.

\bibitem{rousset2021general}
{\sc F.~Rousset and K.~Schratz}, {\em A general framework of low regularity
  integrators}, SIAM J. Numer. Anal., 59 (2021), pp.~1735--1768.

\bibitem{rump2010verification}
{\sc S.~M. Rump}, {\em Verification methods: {R}igorous results using
  floating-point arithmetic}, Acta Numer., 19 (2010), pp.~287--449.

\bibitem{PhysRevLett.53.1951}
{\sc D.~Shechtman, I.~Blech, D.~Gratias, and J.~W. Cahn}, {\em Metallic phase
  with long-range orientational order and no translational symmetry}, Phys.
  Rev. Lett., 53 (1984), pp.~1951--1953.

\bibitem{sheen2003parallel}
{\sc D.~Sheen, I.~H. Sloan, and V.~Thom{\'e}e}, {\em A parallel method for time
  discretization of parabolic equations based on {L}aplace transformation and
  quadrature}, IMA J. Numer. Anal., 23 (2003), pp.~269--299.

\bibitem{simoncini2007recent}
{\sc V.~Simoncini and D.~B. Szyld}, {\em Recent computational developments in
  {K}rylov subspace methods for linear systems}, Numer. Linear Algebra Appl.,
  14 (2007), pp.~1--59.

\bibitem{stadnik2012physical}
{\sc Z.~M. Stadnik}, {\em Physical properties of quasicrystals}, vol.~126,
  Springer, 2012.

\bibitem{szeftel2004design}
{\sc J.~Szeftel}, {\em Design of absorbing boundary conditions for
  {S}chr{\"o}dinger equations in {$\mathbb{R}^d$}}, SIAM J. Numer. Anal., 42
  (2004), pp.~1527--1551.

\bibitem{talbot1979accurate}
{\sc A.~Talbot}, {\em The accurate numerical inversion of laplace transforms},
  J. Inst. Maths Applics, 23 (1979), pp.~97--120.

\bibitem{townsend2015continuous}
{\sc A.~Townsend and L.~N. Trefethen}, {\em Continuous analogues of matrix
  factorizations}, Proceedings of the Royal Society A, 471 (2015), p.~20140585.

\bibitem{trefethen2014exponentially}
{\sc L.~N. Trefethen and J.~A.~C. Weideman}, {\em The exponentially convergent
  trapezoidal rule}, SIAM Rev., 56 (2014), pp.~385--458.

\bibitem{trefethen2006talbot}
{\sc L.~N. Trefethen, J.~A.~C. Weideman, and T.~Schmelzer}, {\em Talbot
  quadratures and rational approximations}, BIT, 46 (2006), pp.~653--670.

\bibitem{tsynkov1998numerical}
{\sc S.~V. Tsynkov}, {\em Numerical solution of problems on unbounded domains.
  {A} review}, Appl. Numer. Math., 27 (1998), pp.~465--532.

\bibitem{tucker2011validated}
{\sc W.~Tucker}, {\em Validated numerics: a short introduction to rigorous
  computations}, PUP, 2011.

\bibitem{webb_thesis}
{\sc M.~Webb}, {\em Isospectral algorithms, {T}oeplitz matrices and orthogonal
  polynomials}, PhD thesis, University of Cambridge, 2017.

\bibitem{webb2017spectra}
{\sc M.~Webb and S.~Olver}, {\em Spectra of {J}acobi operators via connection
  coefficient matrices}, Comm. Math. Phys.,  (to appear).

\bibitem{weideman1994computation}
{\sc J.~A.~C. Weideman}, {\em Computation of the complex error function}, SIAM
  J. Numer. Anal., 31 (1994), pp.~1497--1518.

\bibitem{weideman2010improved}
{\sc J.~A.~C. Weideman}, {\em Improved contour integral methods for parabolic
  {PDE}s}, IMA J. Numer. Anal., 30 (2010), pp.~334--350.

\bibitem{weideman2019gauss}
{\sc J.~A.~C. Weideman}, {\em {Gauss--Hermite} quadrature for the {B}romwich
  integral}, SIAM J. Numer. Anal., 57 (2019), pp.~2200--2216.

\bibitem{weideman2007parabolic}
{\sc J.~A.~C. Weideman and L.~N. Trefethen}, {\em Parabolic and hyperbolic
  contours for computing the {B}romwich integral}, Math. Comp., 76 (2007),
  pp.~1341--1356.

\bibitem{xiao2013cauchy}
{\sc T.-J. Xiao and J.~Liang}, {\em The {C}auchy problem for higher order
  abstract differential equations}, Springer, 2013.

\bibitem{zeng2014crank}
{\sc F.~Zeng, F.~Liu, C.~Li, K.~Burrage, I.~Turner, and V.~Anh}, {\em A
  {Crank--Nicolson ADI} spectral method for a two-dimensional {R}iesz space
  fractional nonlinear reaction-diffusion equation}, SIAM J. Numer. Anal., 52
  (2014), pp.~2599--2622.

\bibitem{zhang2011spectrum}
{\sc G.-D. Zhang and B.-Z. Guo}, {\em On the spectrum of {Euler--Bernoulli}
  beam equation with {Kelvin--Voigt} damping}, J. Math. Anal. Appl., 374
  (2011), pp.~210--229.

\end{thebibliography}

\end{document}